\newtheorem{theorem}{Theorem}[section]
\newtheorem{lemma}[theorem]{Lemma}
\newtheorem{corollary}[theorem]{Corollary}
\newtheorem{proposition}[theorem]{Proposition}
\theoremstyle{definition}
\newtheorem{assumption}[theorem]{Assumption}
\newtheorem{remark}[theorem]{Remark}
\numberwithin{equation}{section}
\theoremstyle{plain}
\numberwithin{equation}{section} 
\numberwithin{figure}{section} 
\theoremstyle{plain}
\theoremstyle{plain}
\theoremstyle{remark}
\newtheorem*{acknowledgement*}{Acknowledgement}
\theoremstyle{example}
\newcommand{\cG}{{\mathcal G}}
\newcommand{\te}{{\theta}}
\newcommand{\ve}{{\varepsilon}}
\newcommand{\del}{{\delta}}
\newcommand{\sig}{{\sigma}}
\newcommand{\al}{{\alpha}}
\newcommand{\ka}{{\kappa}}
\newcommand{\bbC}{{\mathbb C}}
\newcommand{\bbE}{{\mathbb E}}
\newcommand{\bbN}{{\mathbb N}}
\newcommand{\bbP}{{\mathbb P}}
\newcommand{\bbR}{{\mathbb R}}
\newcommand{\bbI}{{\mathbb I}}
\begin{document}
\title[Limit theorems]{Limit theorems for inhomogeneous random walks on $GL(d,\bbR)$}   
 \vskip 0.1cm
 \author{Yeor Hafouta}
\address{
Department of Mathematics, The University of Florida}
\email{yeor.hafouta@mail.huji.ac.il}%

\thanks{ }
\dedicatory{  }
 \date{\today}

\maketitle
\markboth{Y. Hafouta}{ } 
\renewcommand{\theequation}{\arabic{section}.\arabic{equation}}
\pagenumbering{arabic}

\begin{abstract}
We prove Berry-Esseen theorems, almost sure invariance principle rates and  large deviations for products of independent but not identically distributed invertible matrices with some average (logarithmic) projective contraction and  uniform boundedness assumptions. We also characterize the divergence of the variance of the logarithm of the norm of the product.
Our approach is based on verifying the conditions of \cite{NewBE} after reversing time. 

We then dedicate special attention to two examples. The first is  small perturbations (in a weak coupling sense) of iid and other  random matrices. More precisely we show that the average logarithmic projective contraction is  closed under appropriate perturbations.
The second example is the
the case of small perturbations of random matrices in $\text{GL}(d,\bbR)$ for which  the first singular value is larger than the second one (on average) and the distribution of the first column of the left matrix in the SVD of each matrix is sufficiently regular. For $d=2$  our conditions are related to  the distribution of the angle of the second rotation in the random singular value decomposition of the unperturbed random sequence.  What is needed is that the probability that the angle takes values in a segment of length $\epsilon$ is $O(|\ln(\epsilon)|^{-1-\alpha},\,\alpha>0$.

All of the conditions in the above examples seem to be new even for the CLT itself already in the stationary case.
Our results also holds for random contracting (in norm) matrices. Finally in the last section we discuss an extension to Markov dependent matrices.

Our results seem to be the first ones that address the CLT in beyond the $\text{SL}(2,\bbR)$ case (even for small perturbations) and the first result that provides (optimal) speed of convergence and other limit theorems. 
\end{abstract}

\section{Introduction}
The classical central limit theorem (CLT) states that if $(X_j)$ is an iid zero mean sequence of random variables in $L^2$ and $\sig^2=\bbE[X_1^2]>0$ then $(\sig\sqrt n)^{-1}S_n$ converges in distribution to the standard normal law, where $S_n=\sum_{j=1}^nX_j$. 
This is a particular case of the theorem that states that
for independent zero mean $(X_j)$ in $L^2$ then asymptotic normality is equivalent to the Lindeberg condition, where is this case the CLT means that $S_n/\|S_n\|_{L^2}$ converges to the standard normal law if $\|S_n\|_{L^2}\to\infty$. Note that here $\|S_n\|_{L^2}$ can grow arbitrarily slow.

Recall also the the Berry-Esseen theorem (see \cite{Berry, Esseen}) states that when $X_i\in L^3$ then the  CLT rate $\|S_n\|_{L^2}^{-3}\sum_{j=1}^n\bbE[|X_j|^3]$ is achieved in the uniform (Kolmogorov metric). In general the optimal rate is $O(\|S_n\|^{-1})$ and it is reached when, for instance, $X_j$ are uniformly bounded. Of course, in the iid case the more familiar rate $O(n^{-1/2})$ is achieved without further assumptions since then $\|S_n\|_{L^2}=\sigma\sqrt n$.

In this paper we are interested in non-commutative version of the above results. More precisely, we consider a sequence $(g_j)$ of random independent invertible matrices and study limit theorems for sequences of the form $S_n=\ln\|g_n\cdots g_{2}\cdot g_1x_0\|$ where $x_0$ is a fixed unit vector. 

Limit theorems for products of iid random matrices $g_j$ have been studied extensively in the past. The CLT for positive matrices was  obtained in \cite{FK61} (even for mixing stationary matrices), see also \cite{Herve}. Since then, there have been many works on limit theorems for products of iid invertible random matrices and processes with values in other groups,  including Berry-Esseen theorems and local limit theorems  (see \cite{BB,BeQu,BougLac, BRLLT05, BRLLT23, PelMat1, PelMat2, GramaBE, MatAIHP, Guiv2015, Hough2019} and references therein). Recently, also Edgeworth expansions have been obtained (see  \cite{FP, EdgeMat1, EdgeMat2}).

Very little is known about the asymptotic behavior of products of non-stationary matrices, despite that in the non-stationary commutative case has been studied extensively in recent years (see, for instance, \cite{CR07, LD1, DolgHaf PTRF 1, DolgHaf PTRF 2, DolgHaf LLT, DS,Nonlin,HafSPA, NewBE,HNTV,MPP LLT1, MPP LLT2,NSV12, NTV ETDS 18, CLT3} and references therein and  also \cite{Dob,SeVa, Pelig}).
The main problem is that all existing techniques heavily rely on tools from the theory of stationary processes (see \cite{Guiv2015,HH}). Indeed, (see \cite{Guiv2015}) a typical way is to view the product  acting on a unit vector as a Markov process on the projective space. Another approach is  based on ideas in \cite{KPZ}, that is to view the entire problem as an additive sum over a stationary Bernoulli shift (see \cite{LimMat, PelMat1}). In this paper we make a step towards closing this gap between the stationary and the non stationary cases.

To the best of our knowledge, non iid matrices  were addressed for the first time in the recent papers
\cite{Goldsh, GordKlep1, GordKlep2, GordKlep3}. In \cite{Goldsh} sufficient conditions for Markov dependent non-stationary matrices $(g_j)$ were provided that ensure that 
$$
\ln\|g_n\cdots g_{2}\cdot g_1\|
$$
grows linearly fast. In 
\cite{GordKlep1} and \cite{GordKlep2} analgous results to the law of large numbers were obtained. 
However, in general exponential growth rates are not expected. For instance, the norms of the matrices might all be smaller that some $c<1$. Yet, the logarithm of the product might exhibit a non-trivial asymptotic behavior.

As for the CLT, the only existing work is \cite{GordKlep3}. The CLT was obtained for random matrices in $\text{SL}(2,\bbR)$ under the same conditions in \cite{GordKlep1}, which ensure that the variance of $\ln\|g_n\cdots g_{2}\cdot g_1\|
$ grows linear fast. Comparing this with CLT for independent random variables $X_j$ the linear growth is a strong conclusion, since the variance can grow arbitrarily slow.

In this paper we  prove optimal CLT rates and large deviations  for invertible matrices without restrictions on the growth rate of the variance and without assumptions that lead to exponential growth of the norms.
We begin with an abstract ``clean" approach to proving limit theorems for products of independent but not identically distributed invertible random matrices. This will be done by assuming a certain type of logarithmic projective contraction. The main idea here is to show that such contraction puts us in the setup of non-stationary Bernoulli shifts which is a particular case of the setup in \cite{NewBE}. This is done in Proposition \ref{Approx Prop}. Once this proposition is proven all the main results are proven using the methods  in \cite{NewBE}.  

In Section \ref{Pert1} we will show that the logarithmic projective contraction is closed under small perturbations, which will allow us to verify it for small (random) perturbations of iid matrices, for which  the logarithmic projective contraction is known to hold. 
In Section \ref{Pert3} we discuss the general  $\text{GL}(d,\bbR)$ case and provide sufficient conditions for logarithmic contraction.
Using a somehow different approach, in Section \ref{Pert2} we will verify the contraction condition for certain classes of independent but not identically distributed $\text{SL}(2,\bbR)$-valued matrices, and thus also for their perturbations. In fact, our method works when $|\det(g_j)|=1$, and so by normalizing each $g_j$ we obtain sufficient conditions for our main results for $\text{GL}(2,\bbR)$-valued matrices. 
In Section \ref{Decay} we briefly discuss applications to contracting in norm matrices. In Section \ref{Markov} we will discuss extension to Markov dependent matrices.

Our  approach in the abstract setting is close to \cite{LimMat}. Using ideas in \cite{LimMat} and a martingale argument we show that the logarithmic contraction on average implies that when considering the standard additive cocycle decomposition.$S_n=X_1+X_2+...+X_n$, where $X_k$ depends on $g_k,...,g_1$ (see \eqref{X def}), then $X_k$ can be approximated in $L^p$ by functions of $g_k,...,g_{k-r}$ exponentially fast in $r$. For that we need to assume uniform boundedness of the matrices, which is natural when aiming at optimal CLT rates in the non-stationary setting. Note that all existing results for additive sums with optimal CLT rates in a non-stationary setting (see \cite{DolgHaf PTRF 1,DolgHaf PTRF 2, NewBE}) are obtained for uniformly bounded summands.

Our perturbative approach is based on a general simple estimate  for deterministic matrices, see Lemma \ref{ContLemma}, which compares between the (pointwise) projective Lipschitz constants of two matrices. Our approach in the general  $\text{GL}(d,\bbR)$ case involves regularity properties of the distribution of the first column $u_1$ of the left orthogonal matrix in the singular value decomposition plus a sufficiently big gap on average between the first and the second singular values. Our conditions apply to the case when uniformly in $x$ such that $\|x\|=1$ the distribution of $|<u_1,x>|$ assign mass of order $O(|\ln(\epsilon)|^{-1-\alpha}),\,\alpha>0$ to $[0,\epsilon)$.
We also provide an alternative approach in the $\text{GL}(2,\bbR)$ case which is related to the upper Hausdorff dimension of the distribution of the angle of the second rotation in the random singular value decomposition of the unperturbed random sequence (which coincides with $u_1$ above in the $2\times 2$ case).

\section{Preliminaries and main abstract results}\label{Sec2}

\subsection{The setup and standing assumptions}
Let $g_1,g_2,...$ be an independent sequence of real valued random invertible matrices of dimension $d\times d, d\geq2$ such that 
\begin{equation}\label{N g}
\sup_j\|N(g_j)\|_{L^\infty}<\infty,\,\, N(g)=\max(\|g\|,\|g^{-1}\|).    
\end{equation}
Let us fix a unit vector $x_0$ and set $S_n(x_0)=\ln\|g_n\cdots g_1x_0\|$. In this paper we prove limit theorems for the sequence of random variables $S_n(x_0)$.

Next, let $Y=P(\bbR^d)$ be the projective space and let $d(\cdot,\cdot)$ be the metric on $Y$ given by $d(\bar x,\bar y)=|x\wedge y|$, if $x$ and $y$ are unit vectors with directions $\bar x$ and $\bar y$ (that is the sine of the small angle between them).
Our standing assumption is:
\begin{assumption}[Average logarithmic contraction]\label{Ass1}
There exists $n_0\in\bbN$ and $\delta>0$ such that 
$$
\sup_j\sup_{\bar x\not=\bar y}\bbE\left[\ln\left(\frac{d(g_{j,n_0}\bar x,g_{j,n_0}\bar y}{d(\bar x,\bar y)}\right)\right]\leq -\delta.
$$
where $g_{j,n}=g_{j+n-1}\cdots g_j$.
\end{assumption}
\begin{remark}
In Section \ref{Pert1}  we show that the conditions holds for a sequence then it holds for small perturbations of the sequecne (in a certain sense of coupling). This, in particular, provides new examples for the CLT already for small perturbation of iid matrices, even iid perturbations. Other examples include  small perturbations of certain classes of independent but not identically distributed $\text{GL}(d,\bbR)$ random walks, see Sections \ref{Pert3} and \ref{Pert2}. A small comment about the case of contracting matrices appears in Section \ref{Decay}. 
\end{remark}

As usual, the starting point of studying statistical properties of $S_n(x_0)$ relies on the following additive cocycle decomposition.
Define $\sigma(g,y)=\ln\left(\frac{\|gy\|}{\|y\|}\right)$ for $y\in\bbR^d\setminus\{0\}$. Then 
\begin{equation}\label{N g1}
\sup_{y\not=0}|\sigma(g,y)|\leq |\ln(N(g))|.    
\end{equation}
Let us write 
$$
S_n(x_0)=X_1+X_2+...+X_n
$$
where 
\begin{equation}\label{X def}
X_k=\sigma(g_k,S_{k-1}(x_0))=f_k(g_k,g_{k-1},...,g_1)    
\end{equation}
where we suppress the dependence of $f_k$ on $x_0$ since it is fixed.  Then by \eqref{N g} and \eqref{N g1},
$$
\sup_k\|X_k\|_{L^\infty}<\infty.
$$
The key property of the sequence of functions $f_k$ is stated in the following result.
\begin{proposition}\label{Approx Prop}
Under \eqref{N g} and Assumption \ref{Ass1}
there exists $\rho\in(0,1)$ such that for all finite $p\geq 1$,
$$
\sup_k\sup_r\rho^{-r/p}\|X_k-\bbE[X_k|g_k,g_{k-1},...,g_{k-r}]\|_{L^p}<\infty
$$
where we extend $(g_j)$ to a two sided sequence by setting $g_k=A, k<1$ for some fixed invertible matrix $A$. 
\end{proposition}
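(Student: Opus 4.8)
The plan is to express the conditional expectation error $X_k - \bbE[X_k \mid g_k, \dots, g_{k-r}]$ in terms of how much the direction $S_{k-1}(x_0)$ (as an element of $Y$) is determined by the last $r$ matrices $g_{k-1}, \dots, g_{k-r}$. Write $\bar v_{k-1} = \overline{g_{k-1}\cdots g_1 x_0} \in Y$, so that $X_k = \sigma(g_k, v_{k-1}) = \widetilde\sigma(g_k, \bar v_{k-1})$ for the induced function $\widetilde\sigma$ on $G \times Y$, which is Lipschitz in the $Y$-variable uniformly in $g_k$ on the uniformly bounded set where our matrices live (a short computation: $|\sigma(g,x)-\sigma(g,y)| \le C(N(g))\, d(\bar x,\bar y)$, with $C$ bounded by \eqref{N g}). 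Let $\bar v_{k-1}^{(r)} = \overline{g_{k-1}\cdots g_{k-r} A \cdots A\, x_0}$ be the analogous direction using only the last $r$ matrices before $g_k$ (padding with the fixed matrix $A$). The first step is the standard bound: since $\bbE[\widetilde\sigma(g_k,\bar v_{k-1}^{(r)}) \mid g_k, \dots, g_{k-r}]$ is a valid (though not optimal) approximant, the $L^p$ error is at most $2\|\widetilde\sigma(g_k,\bar v_{k-1}) - \widetilde\sigma(g_k, \bar v_{k-1}^{(r)})\|_{L^p} \le 2C \, \| d(\bar v_{k-1}, \bar v_{k-1}^{(r)}) \|_{L^p}$, so everything reduces to controlling $\| d(\bar v_{k-1}, \bar v_{k-1}^{(r)}) \|_{L^p}$.

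The heart of the matter is showing $\| d(\bar v_{k-1}, \bar v_{k-1}^{(r)}) \|_{L^p} = O(\rho^{r/p})$ for some $\rho \in (0,1)$ independent of $k$. Both $\bar v_{k-1}$ and $\bar v_{k-1}^{(r)}$ are images under the \emph{same} random map $g_{k-1}\cdots g_{k-r}$ of two points in $Y$ (namely $\overline{g_{k-r-1}\cdots g_1 x_0}$ and $\overline{A\cdots A\, x_0}$), and these two initial points are independent of $g_{k-1},\dots,g_{k-r}$. So I would fix the initial pair $(\bar x,\bar y)$, bound $d(\bar x,\bar y) \le 1$ trivially, and study $\bbE[ d(g_{k-1,r}\bar x, g_{k-1,r}\bar y)^p ]$ (using the $g_{j,n}$ notation from Assumption \ref{Ass1}, here the block of length $r$ starting at $k-r$). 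The idea is to run Assumption \ref{Ass1} along a chain of blocks of length $n_0$: writing $r = m n_0 + s$, decompose $g_{k-1,r}$ into $m$ consecutive independent blocks of length $n_0$. Using independence and Assumption \ref{Ass1}, the quantity $\ln d(\cdot,\cdot)$ decreases on average by at least $\delta$ per block; combined with the uniform upper bound $|\ln(d(g\bar x, g\bar y)/d(\bar x,\bar y))| \le 2|\ln N(g)|$ from \eqref{N g} (so the per-block increments are uniformly bounded), a standard exponential-moment / large-deviation estimate for sums of independent bounded random variables with negative mean gives $\bbE[d(g_{k-1,r}\bar x, g_{k-1,r}\bar y)^{q}] \le C_q \rho_0^{r}$ for every fixed $q > 0$, uniformly in $k$ and in the (independent) initial pair. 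Taking $q = p$ and $\rho = \rho_0$ finishes this step; one has to be slightly careful because we need $d$ bounded away from $0$ in the $\ln$ — but $d \le 1$ always, and since we only need an upper bound on a $p$-th moment of a quantity in $[0,1]$, we can work with $\bbE[e^{t \ln d}]$ for a small $t>0$ where $\ln d \in (-\infty, 0]$, and the negativity of the drift is exactly what makes this finite and small.

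Finally, I would assemble the pieces: combine the reduction from the first paragraph with the moment bound from the second, noting that the constant $C$ from the Lipschitz estimate and the constants $C_p, \rho_0$ from the large-deviation step are all uniform in $k$ (this uses $\sup_j \|N(g_j)\|_{L^\infty} < \infty$ and the uniformity built into Assumption \ref{Ass1}), to conclude $\sup_k \sup_r \rho^{-r/p} \| X_k - \bbE[X_k \mid g_k, \dots, g_{k-r}]\|_{L^p} < \infty$. The main obstacle I anticipate is the second step: getting a genuinely \emph{uniform-in-$k$} geometric $L^p$ bound on $d(\bar v_{k-1}, \bar v_{k-1}^{(r)})$ from an assumption phrased only in terms of the average contraction of $\ln d$ over a single block of length $n_0$. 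The passage from "negative average of $\ln d$" to "geometric decay of $\bbE[d^p]$" requires the uniform boundedness \eqref{N g} in an essential way (to control the right tail of the log-increments), and one must check that the cocycle structure really does let us chain independent blocks — i.e. that conditioning on the initial pair leaves the block maps with the distributions needed to apply Assumption \ref{Ass1} iteratively. This is where the argument from \cite{LimMat} and the martingale/telescoping bookkeeping alluded to in the introduction will do the work.
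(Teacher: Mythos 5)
Your proposal is correct and follows essentially the same route as the paper: reduce, via the Lipschitz estimate for $\sigma(g,\cdot)$ in the projective variable and the (near-)minimization property of conditional expectation, to controlling the projective distance between the true direction and the one built from the last $r$ matrices, and then obtain geometric decay from Assumption \ref{Ass1} by a bounded-difference martingale (Azuma) plus negative-drift argument, which is exactly the paper's Proposition \ref{Exp Prop}; your only deviation is bounding a small exponential moment of $\ln d$ directly instead of the paper's tail bound followed by splitting into good/bad events and using uniform boundedness, which is equivalent. The one phrase to fix is ``sums of independent bounded random variables'': the block increments are not independent, but (as you yourself flag) they decompose into a bounded-difference martingale plus an a.s.\ negative conditional drift, thanks to the uniformity over pairs $(\bar x,\bar y)$ in Assumption \ref{Ass1} and the independence of the matrix blocks from the past.
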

Let us define a norm 
$$
\|f_k\|_{\infty,p,\rho^{1/p}}=\|f_k\|_{L^\infty}+\sup_r\rho^{-r/p}\|f_k-\bbE[f_k|g_k,g_{k-1},...,g_{k-r}]\|_{L^p}
$$
where we identify between $f_k$ and $X_k$. Then the proposition means that 
$$
\sup_k\|f_k\|_{\infty,p,\rho^{1/p}}<\infty
$$
for all finite $p\geq 1$.
Using the arguments in \cite{NewBE} Proposition \ref{Approx Prop} implies the following results.

\subsection{Divergence of the variance and Berry-Esseen theorems}
The first result concerns the growth rate of the variance  of $S_n(x_0)$, which from now one will be denoted by $S_n$.

In general, in order for the CLT to hold we need the individual summands to of smaller order than the variance.   In particular, we need to know when the variance is bounded. Let us begin with a characterization of this boundedness.
\begin{theorem}\label{Var them}
Under \eqref{N g} and Assumption \ref{Ass1}
the following conditions are equivalent.
\vskip0.2cm
(1) $\liminf_{n\to\infty}\text{Var}(S_n)<\infty$;
\vskip0.2cm
(2) $\sup_{n\in\bbN}\text{Var}(S_n)<\infty$;

\vskip0.2cm
(3) we can write 
$$
f_j-\bbE[f_j]=M_j+u_{j+1}\circ T_j-u_j,\,\ \mu_j-\text{a.s.}
$$
where $\sup_j\|u_j\|_{\infty,p,\rho^{1/p}}<\infty$ and
$\sup_j\|M_j\|_{\infty,p,\rho^{1/p}}\!\!<\!\!\infty$ for all finite $p\geq 1$, $u_j$ and $M_j$ have zero mean, the depend only on $g_j,...,g_1$ and $M_j(g_j,g_{j-1},...g_1),j\geq 0$ is a reverse martingale difference  with respect to the reverse filtration $\cG_j=\sigma\{g_j,g_{j-1},...,g_1\}$ and\footnote{Note that by the martingale converges theorem we get that the sum $\sum_{j=0}^\infty M_j(g_j,g_{j-1},...,g_1)$ converges almost surely and in $L^s$.} 
$$
\sum_{j\geq 0}\text{Var}(M_j(g_j,g_{j+1},...,g_1))\!\!<\!\!\infty.
$$
\vskip0.2cm
(4) there exist measurable functions $H_j$ such that 
$$
f_j(g_j,g_{j-1},...,g_1)=H_{j+1}(g_{j+1},g_j,...,g_1)-H_j(g_j,g_{j-1},...,g_1),\,\text{ a.s.}
$$
\end{theorem}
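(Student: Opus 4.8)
The strategy is to deduce Theorem~\ref{Var them} from the framework of \cite{NewBE} by way of Proposition~\ref{Approx Prop}. Once that proposition is in hand we know $\sup_k\|f_k\|_{\infty,p,\rho^{1/p}}<\infty$ for every finite $p\ge 1$, so the sequence $(X_k)$ falls into the class of non-stationary Bernoulli-shift sums treated there, and Theorem~\ref{Var them} is the ``variance dichotomy'' for that class together with its reverse-martingale--coboundary representation. I would establish the cycle $(2)\Rightarrow(1)\Rightarrow(3)\Rightarrow(4)\Rightarrow(2)$; the implication $(2)\Rightarrow(1)$ is trivial. For concreteness let me indicate the internal logic.

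The substance is $(1)\Rightarrow(3)$, which I would obtain by a Gordin-type decomposition adapted to \emph{reversed} time. Proposition~\ref{Approx Prop} lets one define transfer functions $u_j$ as a telescoping series of conditional expectations of the centred increments $f_m-\bbE[f_m]$ ($m\ge j$) given the reverse $\sigma$-field $\cG_{j-1}=\sigma\{g_{j-1},\dots,g_1\}$; the exponential $L^p$- and $L^\infty$-approximation supplied by that proposition forces this series to converge geometrically in the norm $\|\cdot\|_{\infty,p,\rho^{1/p}}$, uniformly in $j$, so that $\sup_j\|u_j\|_{\infty,p,\rho^{1/p}}<\infty$ for all finite $p$. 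Setting $M_j:=f_j-\bbE[f_j]-\bigl(u_{j+1}\circ T_j-u_j\bigr)$, one checks directly from the defining series that $M_j$ depends only on $g_j,\dots,g_1$, has zero mean, satisfies $\bbE[M_j\mid\cG_{j-1}]=0$ — so the $M_j$ are reverse-martingale differences, in particular pairwise orthogonal — and again lies in $\|\cdot\|_{\infty,p,\rho^{1/p}}$ uniformly in $j$. Telescoping the coboundary part gives $S_n-\bbE[S_n]=\sum_{j=1}^n M_j+B_n$ with $\sup_n\|B_n\|_{L^2}<\infty$, whence by orthogonality $\text{Var}(S_n)=\sum_{j=1}^n\text{Var}(M_j)+O(1)$ uniformly in $n$. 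This identity holds under the standing assumptions alone; feeding in $(1)$ — boundedness of $\text{Var}(S_{n_i})$ along some subsequence — yields $\sum_{j\ge 0}\text{Var}(M_j)<\infty$, which is $(3)$, and simultaneously shows $\sup_n\text{Var}(S_n)<\infty$, i.e. $(2)$.

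The remaining implications are routine packaging. For $(3)\Rightarrow(4)$: since $\sum_j\text{Var}(M_j)<\infty$ and the $M_j$ are reverse-martingale differences, $R_j:=\sum_{m\ge j}M_m$ converges a.s. and in $L^2$ (martingale convergence, as in the footnote to the statement), with $\|R_j\|_{L^2}^2=\sum_{m\ge j}\text{Var}(M_m)$ bounded in $j$ and $M_j=R_j-R_{j+1}$; then $H_j:=u_j-R_j+\sum_{1\le m<j}\bbE[f_m]$ telescopes the identity of $(3)$ (with the same convention for ``$\circ T_j$'', which for the fixed sequence $(g_j)$ is plain re-indexing) into $f_j=H_{j+1}-H_j$, and $\sup_j\text{Var}(H_j)<\infty$ since $u_j$ is uniformly bounded and $R_j$ uniformly bounded in $L^2$ — this is the natural regularity requirement that condition $(4)$ carries, inherited from $(3)$. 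For $(4)\Rightarrow(2)$: from $f_j=H_{j+1}-H_j$ one gets $S_n=H_{n+1}-H_1$, hence $\text{Var}(S_n)\le 2\text{Var}(H_{n+1})+2\text{Var}(H_1)\le 4\sup_j\text{Var}(H_j)<\infty$, which closes the cycle.

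The main obstacle is the reversed-time Gordin construction in $(1)\Rightarrow(3)$: identifying the right transfer functions $u_j$ and proving that the series defining them, and the resulting $M_j$, converge in the strong norm $\|\cdot\|_{\infty,p,\rho^{1/p}}$ uniformly in $j$. This is exactly the step that uses the exponential approximation of Proposition~\ref{Approx Prop}, and hence the average logarithmic contraction of Assumption~\ref{Ass1}, and where one leans on the machinery of \cite{NewBE}. The only other care needed is bookkeeping: $(1)$ supplies only a $\liminf$, which is why one passes to a subsequence, and one must track the shift convention ``$\circ T_j$'' (evaluation on the two-sided extension of $(g_j)$) so that the coboundary sums telescope correctly; neither causes real difficulty.
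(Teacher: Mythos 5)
Your cycle $(2)\Rightarrow(1)\Rightarrow(3)\Rightarrow(4)\Rightarrow(2)$ departs from the paper's route: the paper obtains the martingale--coboundary decomposition (Proposition \ref{MartProp}) from the reversed-time RPF/transfer-operator theorem (Theorem \ref{RPF}) and then completes Theorem \ref{Var them} by invoking the Livšic-regularity results of \cite{BDH}, whereas you rebuild the decomposition by a Gordin-type series of conditional expectations. For the $(1)\Leftrightarrow(2)\Leftrightarrow(3)$ part your plan is workable, with two caveats: Proposition \ref{Approx Prop} supplies exponential approximation in $L^p$ only for \emph{finite} $p$, not in $L^\infty$, so the $L^\infty$ component of $\sup_j\|u_j\|_{\infty,p,\rho^{1/p}}<\infty$ does not follow as you claim; one needs either the independence structure (the uniform-in-direction $L^1$ estimate inside the proof of Proposition \ref{Approx Prop} yields a pointwise bound $|\bbE[f_m-\bbE f_m\mid \cG_{j-1}]|\le C\delta^{m-j}$) or the spectral bound \eqref{Exp}. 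Also $\mathrm{Var}(S_n)=\sum_{j\le n}\mathrm{Var}(M_j)+O(1)$ is not an identity, since $B_n$ is not orthogonal to the $M_j$; the equivalence survives only after inserting the Cauchy--Schwarz cross term.

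The genuine gap is your treatment of condition (4). First, your $H_j=u_j-R_j+\sum_{m<j}\bbE[f_m]$ contains $R_j=\sum_{m\ge j}M_m$, which depends on the entire future $(g_m)_{m\ge j}$, so it is not of the required adapted form $H_j(g_j,\dots,g_1)$; adaptedness is not cosmetic here, it is the content of (4). Second, your $(4)\Rightarrow(2)$ uses $\sup_j\mathrm{Var}(H_j)<\infty$, a uniformity that the stated condition (4) does not contain. Note the dichotomy: if one reads (4) with your added uniform $L^2$ bound, then given (2) the implication $(2)\Rightarrow(4)$ is immediate by taking $H_j:=S_{j-1}$ (adapted, with $\mathrm{Var}(H_j)\le\sup_n\mathrm{Var}(S_n)$), so under your reading (4) adds nothing; if instead (4) is the unqualified measurable coboundary condition intended by the paper, then the implication $(4)\Rightarrow(1)$ is precisely the Livšic-regularity rigidity (``measurable solution of the coboundary equation forces bounded variance / a regular solution'') for which the paper imports the main results of \cite{BDH} on top of Proposition \ref{MartProp}. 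That implication is the genuinely hard part of the theorem, it requires the transfer-operator machinery (or an equivalent argument), and your ``routine packaging'' does not reach it.
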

We also refer to Proposition \ref{MomAss} for moment estimates of $S_n$.

Next, denote $\sig_n=\sqrt{\text{Var}(S_n)}$.
Recall that $S_n$ obeys the central limit theorem (CLT) if for every real $t$, 
$$
F_n(t):=\bbP(S_n-\bbE[S_n]\leq\sig_n t)\to\frac{1}{\sqrt{2\pi}}\int_{-\infty}^te^{-x^2/2}dx:=\Phi(t)
$$
as $n\to\infty$. Our next results are optimal convergence rates in the CLT, but we stress that the CLT by itself seems to be a new results in our setup.

\begin{theorem}\label{BE}
Under \eqref{N g} and Assumption \ref{Ass1} we have:
\vskip0.2cm
(i)  for all finite $s\geq 0$ there is a constant $C_s$ such that 
$$
\sup_{t\in\bbR}(1+|t|^s)\left|F_n(t)-\Phi(t)\right|\leq C_s\sig_n^{-1}.
$$ 
\vskip0.1cm
(ii) for all $q>0$ we have 
$
\left\|F_n-\Phi\right\|_{L^q(dx)}=O(\sig_n^{-1}).
$
\vskip0.1cm
(iii)  for all finite $s\geq 1$  there is a constant $C_s$ such that
for every absolutely continuous function $h:\bbR\to\bbR$  such that 
$H_s(h):=\int \frac{|h'(x)|}{1+|x|^s}dx<\infty$ we have 
$$
\left|\bbE[h((S_n-\bbE[S_n])/\sig_n)]-\int h d\Phi\right|\leq C_s H_s(h)\sig_n^{-1}.
$$
\end{theorem}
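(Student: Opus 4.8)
The plan is to reduce Theorem \ref{BE} entirely to the machinery of \cite{NewBE} via Proposition \ref{Approx Prop}. The point of Proposition \ref{Approx Prop} is precisely that the cocycle decomposition $S_n = X_1 + \dots + X_n$ with $X_k = f_k(g_k,\dots,g_1)$ realizes $(S_n)$ as an additive functional of a non-stationary Bernoulli shift, with uniformly bounded summands and exponentially decaying $L^p$-approximation coefficients $\|f_k - \bbE[f_k\,|\,g_k,\dots,g_{k-r}]\|_{L^p} \le C\rho^{r/p}$. That is exactly the hypothesis under which \cite{NewBE} proves Berry–Esseen bounds, $L^q$-rates on the distribution function, and smooth-function (Wasserstein-type) rates, all of order $O(\sig_n^{-1})$, with the polynomial weights $(1+|t|^s)$ and $H_s(h)$ appearing in the statement. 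So the body of the proof is: (a) record that, after extending to a two-sided sequence as in Proposition \ref{Approx Prop}, the sequence $(f_k)$ has $\sup_k \|f_k\|_{\infty,p,\rho^{1/p}} < \infty$ for every finite $p\ge 1$; (b) quote the corresponding theorem(s) of \cite{NewBE} verbatim with this input; (c) observe that the weighted uniform bound in (i) and the $L^q$-bound in (ii) and the $H_s$-bound in (iii) are literally the conclusions of those theorems, specialized to our $\sig_n$.

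In more detail, I would first cite Proposition \ref{Approx Prop} to obtain $\rho\in(0,1)$ such that for all finite $p\ge1$, $\sup_k\sup_r \rho^{-r/p}\|X_k - \bbE[X_k\,|\,g_k,\dots,g_{k-r}]\|_{L^p} < \infty$, and note that $\sup_k\|X_k\|_{L^\infty}<\infty$ follows from \eqref{N g} and \eqref{N g1}; together these give $\sup_k\|f_k\|_{\infty,p,\rho^{1/p}}<\infty$. Then I would invoke the Berry–Esseen theorem of \cite{NewBE} for non-stationary Bernoulli shifts: for sequences with uniformly bounded summands and geometric $L^p$-approximation rates, one has $\sup_t |F_n(t) - \Phi(t)| = O(\sig_n^{-1})$, and moreover the polynomially weighted version $\sup_t (1+|t|^s)|F_n(t)-\Phi(t)| = O(\sig_n^{-1})$ holds for every finite $s$, which is (i). For (ii), the $L^q(dx)$-estimate, I would cite the analogous statement in \cite{NewBE} (or deduce it from (i): since $(1+|t|^s)|F_n(t)-\Phi(t)| \le C_s\sig_n^{-1}$, for $s$ chosen with $sq>1$ we get $\|F_n-\Phi\|_{L^q}^q \le (C_s\sig_n^{-1})^q \int (1+|t|^s)^{-q}\,dt = O(\sig_n^{-q})$). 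For (iii), the smooth-function rate, I would cite the Wasserstein/Stein-type bound from \cite{NewBE} that controls $|\bbE[h((S_n-\bbE[S_n])/\sig_n)] - \int h\,d\Phi|$ by $C_s H_s(h)\sig_n^{-1}$ for absolutely continuous $h$ with $H_s(h)<\infty$.

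The only genuinely substantive point — and the place where care is needed rather than a real obstacle — is confirming that the hypotheses of the cited theorems of \cite{NewBE} are exactly the norm $\|\cdot\|_{\infty,p,\rho^{1/p}}$ boundedness (for all finite $p$) plus independence of the $g_j$, with no extra mixing, stationarity, or spectral-gap assumption, and that \cite{NewBE} does \emph{not} assume a lower bound on the growth of $\sig_n$; the rates there are stated in terms of $\sig_n$ itself, degenerating to trivial bounds when $\sig_n$ stays bounded, which is consistent with the present setup. One should also note that the reverse-time extension (setting $g_k = A$ for $k<1$) used in Proposition \ref{Approx Prop} is precisely the device that makes the $(f_k)$ a one-sided functional of a two-sided independent sequence, matching the Bernoulli-shift framework of \cite{NewBE}; this is already handled there. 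I do not expect any new estimate to be required: once Proposition \ref{Approx Prop} is in hand, Theorem \ref{BE} is a direct transcription of results from \cite{NewBE}.
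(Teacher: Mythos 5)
Your overall strategy is the paper's: once Proposition \ref{Approx Prop} gives $\sup_k\|f_k\|_{\infty,p,\rho^{1/p}}<\infty$ for all finite $p$, Theorem \ref{BE} is obtained by feeding this into external machinery, and your observation that \eqref{N g}, \eqref{N g1} supply the uniform boundedness and that no lower bound on $\sig_n$ is needed is consistent with how the paper proceeds. Your deduction of (ii) from (i) by integrating the weight $(1+|t|^s)^{-q}$ with $sq>1$ is also fine.

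However, there is a gap in the way you close the argument: you assert that parts (i)--(iii), including the polynomial weights $(1+|t|^s)$ and the $H_s(h)$ functional, are ``literally the conclusions'' of theorems in \cite{NewBE} under the $\|\cdot\|_{\infty,p,\rho^{1/p}}$ hypothesis, so that the proof is a verbatim transcription. That is not how the paper (or, per its own citations, the reference \cite{NewBE}) works. The actual mechanism is: after reversing time ($Y_k=g_{-k}$, with the extension $g_k=A$ for $k<1$), one introduces the complex transfer operators $L_{j,z}$ and proves the sequential Ruelle--Perron--Frobenius theorem (Theorem \ref{RPF}) by the methods of \cite{NewBE}; from the analyticity and uniform exponential convergence in \eqref{Exp} one derives the moment bounds of Proposition \ref{MomAss} and, crucially, the uniform control \eqref{Add} of the derivatives of $\Lambda_{0,n}(t)=\ln(\bbE[e^{itS_n}])$ near $0$ (Proposition \ref{LogGrowth}). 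Only with \eqref{Add} and the moment estimates in hand do the weighted (non-uniform) Berry--Esseen statements (i)--(iii) follow, and they are quoted not from \cite{NewBE} but from the abstract non-uniform results of \cite[Theorem 4 and Corollary 5]{NonUBE} and \cite[Theorem 9]{NonUEdge}. So the step you treat as bookkeeping --- how exponentially decaying approximation coefficients are converted into $O(\sig_n^{-1})$ rates carrying the weights $(1+|t|^s)$ and $H_s(h)$ --- is exactly the substantive intermediate layer (time-reversed RPF theory plus cumulant/characteristic-function control), and as written your proposal neither supplies it nor points to a source that states it in the form you invoke.
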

One example of functions in (iii) above are $h(x)=x^{a}, a<s$. Then Theorem \ref{BE} (iii) provides estimates form the moments of $S_n-\bbE[S_n]$ by means of the variance of $S_n$ and the standard normal moments.

Next, recall that the $p$-th Wasserstien distance between two probability measures $\mu,\nu$ on $\bbR$ with finite absolute moments of order $b$ is given by 
$$
W_p(\mu,\nu)=\inf_{(X,Y)\in\mathcal C(\mu,\nu)}\|X-Y\|_{L^b}
$$
where $\mathcal C(\mu,\nu)$ is the class of all  pairs of random variables $(X,Y)$ on $\bbR^2$ such that $X$ is distributed according to $\mu$, and $Y$ is distributed according to $\nu$. 

\begin{theorem}\label{ThWass}
Under \eqref{N g} and Assumption \ref{Ass1}, for every finite $p\geq 1$ we have 
$$
W_p(dF_n, d\Phi)=O(\sig_n^{-1})
$$
where $dG$ is the measure induced by a distribution function $G$.
\end{theorem}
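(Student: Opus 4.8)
The plan is to reduce everything, via Proposition \ref{Approx Prop}, to the non-stationary Bernoulli shift framework of \cite{NewBE}, where the bound $W_p(dF_n,d\Phi)=O(\sig_n^{-1})$ is among the main results and applies verbatim; below I indicate the mechanism. Write $\brX_n=(S_n-\bbE[S_n])/\sig_n$ and let $Z$ be $N(0,1)$. Since $F_n$ and $\Phi$ are distribution functions on $\bbR$, the monotone (quantile) coupling is optimal for every $W_p$, so
\[
W_p(dF_n,d\Phi)^p=\int_0^1\big|F_n^{-1}(u)-\Phi^{-1}(u)\big|^p\,du,
\]
where $F_n^{-1}$ is the quantile function of $\brX_n$. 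One may assume $\sig_n\to\infty$: along a subsequence where $\sig_n$ stays bounded, $W_p(dF_n,d\Phi)\le\|\brX_n\|_{L^p}+\|Z\|_{L^p}$ is bounded (uniform moment bounds for $\brX_n$, see Proposition \ref{MomAss}) while $\sig_n^{-1}$ is bounded below, so the claim is trivial there. The case $p=1$ is immediate from Theorem \ref{BE}(ii) with $q=1$, since $W_1(dF_n,d\Phi)=\int_{\bbR}|F_n(x)-\Phi(x)|\,dx$.

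For $p>1$ I would split $[0,1]=I_n\cup J_n$, $I_n=[a_n,1-a_n]$, with $a_n=\sig_n^{-(p+2)}$. \textbf{Tails.} On $J_n$, Markov's inequality applied to a high moment of $\brX_n$ (Proposition \ref{MomAss}) gives $|F_n^{-1}(u)|\le C_m(\min(u,1-u))^{-1/m}$, while the Gaussian quantile satisfies the even smaller bound $|\Phi^{-1}(u)|\le C\sqrt{\ln(1/\min(u,1-u))}$; hence, choosing $m$ large,
\[
\int_{J_n}\big|F_n^{-1}(u)-\Phi^{-1}(u)\big|^p\,du\le C\,a_n^{\,1-p/m}=O(\sig_n^{-(p+1)})=o(\sig_n^{-p}).
\]

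\textbf{Bulk.} On $I_n$ one has $|\Phi^{-1}(u)|\le b_n:=\Phi^{-1}(1-a_n)=O(\sqrt{\ln\sig_n})$, and the crux is a quantile estimate of the form $\big|F_n^{-1}(u)-\Phi^{-1}(u)\big|\le C_p\,\sig_n^{-1}\big(1+|\Phi^{-1}(u)|\big)^{\ka}$ on $I_n$, for some fixed $\ka$. Granting this, the substitution $u=\Phi(y)$ gives $\int_{I_n}|F_n^{-1}-\Phi^{-1}|^p\,du\le C_p^{\,p}\sig_n^{-p}\int_{\bbR}(1+|y|)^{p\ka}\,d\Phi(y)=O(\sig_n^{-p})$, which together with the tail estimate proves the theorem. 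The quantile bound is obtained by inverting a \emph{Gaussian-weighted} (first-order Edgeworth-type) Berry--Esseen estimate — roughly $|F_n(y)-\Phi(y)|\le C\sig_n^{-1}(1+|y|)^{\ka}\phi(y)$, equivalently a relative-error control $\bar F_n(y)=\bar\Phi(y)\big(1+O(\sig_n^{-1}(1+|y|)^{\ka})\big)$ valid for $|y|\lesssim b_n$ — which is exactly the type of estimate the methods of \cite{NewBE} produce once Proposition \ref{Approx Prop} is in force; inverting it using the $\asymp|y|$ growth of the Gaussian hazard rate yields the bound above.

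\textbf{Main obstacle.} The delicate point is precisely this bulk estimate. The \emph{polynomially}-weighted Berry--Esseen bound of Theorem \ref{BE}(i) is \emph{not} enough for $p>1$: converting $\sig_n^{-1}(1+|y|)^{-s}$ into a quantile bound forces a division by the Gaussian density and produces the non-integrable factor $e^{(p-1)y^2/2}$, yielding only $W_p=O(\sig_n^{-1/p+\epsilon})$. What is needed is the extra Gaussian decay in the remainder — first-order Edgeworth-type control of $F_n-\Phi$ — which is where the core estimates of \cite{NewBE} enter. A minor additional point is that $F_n$ may have atoms or flat intervals; this is absorbed by working with generalized inverses throughout and using the Kolmogorov--Rogozin-type concentration bounds that follow from the near-independence established in Proposition \ref{Approx Prop}.
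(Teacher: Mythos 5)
At the top level your proposal does what the paper does: Theorem \ref{ThWass} is not proven "by hand" here, but obtained by feeding Proposition \ref{Approx Prop} into the time-reversed framework, deriving Theorem \ref{RPF} and then Proposition \ref{LogGrowth}, i.e. the bound \eqref{Add} on the derivatives of $\Lambda_{0,n}(t)=\ln\bbE[e^{itS_n}]$ near $0$, and quoting the general results \cite[Theorem 4 and Corollary 5]{NonUBE} and \cite[Theorem 9]{NonUEdge}. So your opening claim ("reduce to the framework and cite") is exactly the paper's argument, and several of your observations are correct and worth keeping: the case $p=1$ is indeed immediate from Theorem \ref{BE}(ii), the tails are handled by Proposition \ref{MomAss}, and your diagnosis that the polynomially weighted bound of Theorem \ref{BE}(i) can only give $O(\sig_n^{-1/p})$ for $p>1$ is right.

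The gap is in the "mechanism" you sketch, which is also not the route the paper (or its references) takes. Your bulk estimate rests on a Gaussian-weighted, relative-error Berry--Esseen bound $|F_n(y)-\Phi(y)|\leq C\sig_n^{-1}(1+|y|)^{\ka}\phi(y)$ in the moderate-deviation range $|y|\lesssim\sqrt{\ln\sig_n}$. No such statement appears among the results of this paper, and you do not derive it: you assert it is "exactly the type of estimate the methods of \cite{NewBE} produce", which is precisely the step that would have to be proven. Within the paper's toolkit it would have to be extracted from Theorem \ref{RPF} (analyticity of $z\mapsto\la_j(z)$ on a complex disc) via a Cram\'er--Petrov type expansion, and that is a nontrivial additional argument, not a corollary of Theorem \ref{BE}. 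The paper avoids this entirely by working on the Fourier/cumulant side: the only input to the Wasserstein bound is \eqref{Add}, under which the cited theorems of \cite{NonUBE} and \cite{NonUEdge} convert control of the characteristic function directly into $W_p(dF_n,d\Phi)=O(\sig_n^{-1})$, with no quantile inversion and no pointwise Gaussian-weighted Kolmogorov estimate. So as written your proof is incomplete at its decisive step; to repair it along your lines you would need to either prove the moderate-deviation (Edgeworth/Cram\'er-type) bound from Theorem \ref{RPF}, or simply follow the paper and route the argument through Proposition \ref{LogGrowth} and the quoted results.
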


\subsection{Almost sure invariance principle rates}
\begin{theorem}\label{ASIP}
For every $\varepsilon>0$ there is a coupling of the sequence $(g_j)$ with a Brownian motion $W(t)$ such that 
\vskip0.1cm

(1)$$|S_n-\bbE[S_n]-W(\sigma_n^2)|=O(\sigma_n^{1/2+\varepsilon})$$ 
almost surely, and
\vskip0.1cm
(2)$$\|S_n-\bbE[S_n]-W(\sigma_n^2)\|_{L^2}=O(\sigma_n^{1/2+\varepsilon}).$$
\end{theorem}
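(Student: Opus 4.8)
The plan is to deduce Theorem~\ref{ASIP} from Proposition~\ref{Approx Prop} by invoking the almost sure invariance principle machinery of \cite{NewBE}. First I would recall that Proposition~\ref{Approx Prop} shows that, after extending $(g_j)$ to a two--sided sequence, the summands $X_k=f_k(g_k,g_{k-1},\dots,g_1)$ satisfy $\sup_k\|f_k\|_{\infty,p,\rho^{1/p}}<\infty$ for every finite $p\geq 1$; this is precisely the ``coupling/approximation by short-range functions'' hypothesis under which \cite{NewBE} establishes the ASIP for non-stationary Bernoulli shifts. In particular, the $(X_k)$ are uniformly bounded (hence in every $L^p$), they are adapted to the non-stationary filtration generated by the $g_j$'s, and they can be approximated exponentially fast in $L^p$ by functions depending on only finitely many coordinates. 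So the first real step is just bookkeeping: verify that our $(X_k)$, $S_n=\sum_{k\le n}X_k$, and $\sigma_n^2=\mathrm{Var}(S_n)$ match the objects in the relevant theorem of \cite{NewBE} verbatim.

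The second step is to state the ASIP result from \cite{NewBE} in the form it gives: for every $\varepsilon>0$ one can (after enlarging the probability space) construct a standard Brownian motion $W(t)$ such that $S_n-\bbE[S_n]-W(\sigma_n^2)=O(\sigma_n^{1/2+\varepsilon})$ almost surely, and the same bound holds in $L^2$. Here one should be a little careful about the degenerate case: if $\sigma_n$ stays bounded (equivalently, by Theorem~\ref{Var them}, $\sup_n\mathrm{Var}(S_n)<\infty$), then the statement is essentially vacuous/trivial — $W(\sigma_n^2)$ is bounded in $L^2$ and $S_n-\bbE[S_n]$ is too, so any coupling gives an $O(1)=O(\sigma_n^{1/2+\varepsilon})$ bound, but one may want to invoke the decomposition in Theorem~\ref{Var them}(3) to make the coupling explicit if \cite{NewBE} requires $\sigma_n\to\infty$. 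In the non-degenerate case $\sigma_n\to\infty$ and the estimate of \cite{NewBE} applies directly; one simply copies the conclusion.

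The third step, if \cite{NewBE} states the ASIP in terms of the partial-sum variances $v_n=\sigma_n^2$ along the integers rather than as a time-changed Brownian motion on the continuum, is the routine reparametrization: extend $W$ to all of $[0,\infty)$, note that $|\sigma_{n+1}^2-\sigma_n^2|=|\mathrm{Var}(S_{n+1})-\mathrm{Var}(S_n)|$ is uniformly bounded because the $X_k$ are uniformly bounded (and have exponentially decaying long-range dependence), so the Brownian increments between consecutive variance levels are controlled by the modulus of continuity of Brownian motion, giving fluctuations of order $(\log\sigma_n^2)^{1/2}\cdot(\text{bounded})^{1/2}=o(\sigma_n^{1/2+\varepsilon})$; this is absorbed into the error term. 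The $L^2$ version follows the same way, using that the $L^2$ ASIP bound and Brownian increment estimates are both in $L^2$.

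The main obstacle — to the extent there is one — is not a genuine mathematical difficulty but the matching of conventions: ensuring that the precise hypotheses of the ASIP theorem in \cite{NewBE} (the exact decay rate assumed for the conditional-expectation approximations, any moment conditions beyond $L^2$, possibly a non-degeneracy or lower-bound condition on $\sigma_n$, and whether they allow a genuinely two-sided/non-stationary index set) are all implied by our setup. Proposition~\ref{Approx Prop} gives approximation in \emph{every} $L^p$ with a common geometric rate $\rho^{1/p}$, which is stronger than what is typically required, so this should go through; the one point that deserves an explicit sentence is the degenerate variance case, handled via Theorem~\ref{Var them}. Once the hypotheses are checked, Theorem~\ref{ASIP} is immediate from \cite{NewBE}.
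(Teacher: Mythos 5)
Your overall reduction -- establish the approximation property of Proposition \ref{Approx Prop}, check that the summands $X_k$ are uniformly bounded and adapted, and then invoke pre-existing strong-approximation machinery of the author -- is the same strategy the paper follows, but the paper does not obtain Theorem \ref{ASIP} as a verbatim citation of an ASIP theorem in \cite{NewBE}. What the paper actually does is: after reversing time, it proves the sequential Ruelle--Perron--Frobenius theorem (Theorem \ref{RPF}) and from it the reverse-martingale--coboundary decomposition $f_j-\bbE[f_j]=M_j+u_{j+1}\circ T_j-u_j$ of Proposition \ref{MartProp} together with the moment bounds of Proposition \ref{MomAss}, and then states that the ASIP rates ``follow using a similar argument to'' \cite{DolgHaf PTRF 2 ARXIV}, i.e.\ one adapts the strong-approximation argument for sequences of expanding maps (Skorokhod-embedding type approximation of the reverse martingale $\sum_j M_j$ by a Brownian motion at time $\mathrm{Var}$, with the uniformly bounded coboundary terms $u_j$ absorbed into the $O(\sigma_n^{1/2+\varepsilon})$ error). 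So your second step, which presumes that \cite{NewBE} already contains an ASIP with rate $\sigma_n^{1/2+\varepsilon}$ applicable to non-stationary Bernoulli shifts, is the point of divergence: the paper's own attributions suggest that such a ready-made statement is not available there, and the intended route passes through Propositions \ref{MartProp} and \ref{MomAss} rather than through Proposition \ref{Approx Prop} alone. If the cited ASIP does exist in the form you describe, your argument is complete modulo your own bookkeeping; if not, you need to supply (or at least point to) the martingale approximation step, which is the actual content behind the paper's one-line proof. Your treatment of the degenerate case $\sup_n\sigma_n<\infty$ via Theorem \ref{Var them} and the reparametrization of the Brownian time scale using the boundedness of $\sigma_{n+1}^2-\sigma_n^2$ are sensible and consistent with what that adaptation requires.
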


\subsection{Large deviations}
Our next result is an exponential concentration inequality.
\begin{theorem}\label{ConcentrationIneq}
 There exist constants $c,C>0$ such that for all $t>0$ and all $n$,
 $$
\bbP(|S_n|\geq tn+C)\leq 2e^{-ct^2n}.
 $$
\end{theorem}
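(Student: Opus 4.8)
The plan is to deduce the bound from Proposition \ref{Approx Prop} together with the concentration estimates of \cite{NewBE}, after two elementary reductions. First, by \eqref{N g} and \eqref{N g1} one has $c_0:=\sup_k\|X_k\|_{L^\infty}<\infty$, so $|S_n|\le c_0 n$ and $|\bbE[S_n]|\le c_0 n$ for all $n$; hence, taking $C\ge c_0$, the event $\{|S_n|\ge tn+C\}$ is empty whenever $t\ge c_0$, and it suffices to treat $0<t<c_0$. Secondly, since $n^{-1}|\bbE[S_n]|\le c_0$, it is enough to establish the estimate for the \emph{centered} sums, i.e.\ to produce $c,C'>0$ with $\bbP\big(|S_n-\bbE[S_n]|\ge tn+C'\big)\le 2e^{-ct^2n}$ for all $n$ and $0<t<c_0$; the stated inequality then follows after adjusting the constants.

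For the centered statement I would use the $L^p$ moment method. Proposition \ref{Approx Prop} asserts precisely that $\sup_k\|f_k\|_{\infty,p,\rho^{1/p}}<\infty$ for every finite $p\ge1$, which is the running hypothesis of \cite{NewBE}. The moment bounds proved there — via the reverse--martingale--plus--coboundary approximation of $S_n-\bbE[S_n]$, i.e.\ the general form of the decomposition in Theorem \ref{Var them}(3) — yield a constant $C_1$ independent of $n$ and $p$ with
$$
\big\|S_n-\bbE[S_n]\big\|_{L^p}\le C_1\sqrt{p\,n},\qquad p\ge 2,\ n\ge1 .
$$
Granting this, Markov's inequality gives $\bbP\big(|S_n-\bbE[S_n]|\ge tn\big)\le\big(C_1 t^{-1}\sqrt{p/n}\big)^{p}$ for every $p\ge2$, and optimizing the right-hand side over $p$ (the optimum is at $p\asymp t^2n/C_1^2$) produces $\exp(-ct^2n)$ whenever that optimal $p$ is $\ge 2$; in the remaining range $t^2n=O(1)$ the bound $2e^{-ct^2n}$ exceeds $1$ once $c$ is small. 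This gives the centered estimate, hence the theorem. Equivalently, one may simply quote the exponential concentration inequality of \cite{NewBE}, Proposition \ref{Approx Prop} placing $(X_k)$ in its hypotheses.

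The one substantive point is the \emph{sub-Gaussian} scaling $\sqrt{pn}$ in the moment bound, as opposed to the weaker $p\sqrt n$ that a crude Rosenthal/Burkholder estimate would give — the latter only yields a bound of the form $e^{-ct\sqrt n}$, not $e^{-ct^2n}$. The extra $\sqrt p$ comes from the uniform $L^\infty$ control on the increments: in $S_n-\bbE[S_n]=\sum_{j\le n}M_j+(\text{coboundary})$ the coboundary is bounded in every $L^p$ by $2\sup_j\|u_j\|_{\infty,p,\rho^{1/p}}$, while the martingale--difference sum is controlled by a Hoeffding/Azuma-type moment inequality using $\sup_j\|M_j\|_{L^\infty}<\infty$ together with $\sum_{j\le n}\bbE[M_j^2]=O(n)$; this is exactly the technology of \cite{NewBE}, so once Proposition \ref{Approx Prop} is in hand the argument is mechanical, and I expect it to be the only delicate step. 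Finally, note that a direct appeal to McDiarmid's bounded-differences inequality for $S_n=\ln\|g_n\cdots g_1x_0\|$ is hopeless here: perturbing a single $g_i$ can change $S_n$ by an amount of order $n-i$, so $\sum_i c_i^2\asymp n^3$; it is precisely the geometric $L^p$-decay of the dependence furnished by Proposition \ref{Approx Prop} — which is strictly weaker than a sup-norm bounded-differences property — that makes the concentration work.
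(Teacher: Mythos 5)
Your proof is correct and follows essentially the same route as the paper: both rest on the reverse-martingale-plus-coboundary decomposition of Proposition~\ref{MartProp} with uniformly $L^\infty$-bounded martingale increments (the coboundary is absorbed into $C$), and then invoke Azuma--Hoeffding-type concentration for the martingale part. The paper applies the exponential (Chernoff) form of Azuma--Hoeffding directly, whereas you pass through the equivalent sub-Gaussian $L^p$-moment bound $\|S_n-\bbE[S_n]\|_{L^p}\lesssim\sqrt{pn}$ and optimize over $p$; these are standard interchangeable formulations of the same estimate.
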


\subsection{Moderate deviations principle under linear growth of the variance}

We can prove the following moderate deviations principle  with optimal scale.
\begin{theorem}\label{MDP}
Under \eqref{N g} and Assumption \ref{Ass1} the following holds. Suppose that $\liminf\frac{\sigma_n}{\sqrt n}>0$.
 Let $(a_n)$ be a sequence such that $a_n\to\infty$ such that $\lim_{n\to\infty}\frac{a_n}{\sqrt n}=\infty$
 but $a_n=o(n)$. Denote $s_n=a_n^2/n$.
Then for every Borel measurable set $\Gamma\subset\bbR$
$$
-\frac12\inf_{x\in \Gamma^o}x^2\leq \liminf_{n\to\infty}\frac{1}{s_n}\ln\bbP((S_{n}f/a_n)\in\Gamma)\leq \limsup_{n\to\infty}\frac{1}{s_n}\ln\bbP((S_{n}f/a_n)\in\Gamma)\leq -\frac12\inf_{x\in \overline{\Gamma}}x^2
$$
where $\Gamma^o$ is the interior of $\Gamma$ and $\overline{\Gamma}$ is it's closure. 
\end{theorem}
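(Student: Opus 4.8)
\textbf{Proof plan for Theorem \ref{MDP}.}
The plan is to derive the moderate deviations principle (MDP) from the structural information already available about the cocycle decomposition $S_n = X_1 + \cdots + X_n$, together with the exponential decay of the coupling coefficients from Proposition \ref{Approx Prop} and the linear-growth hypothesis $\liminf \sigma_n/\sqrt n > 0$. The standard route is the Gärtner–Ellis theorem: it suffices to show that for every $t \in \bbR$,
$$
\Lambda(t) := \lim_{n\to\infty} \frac{1}{s_n} \ln \bbE\!\left[\exp\!\left(\frac{t\, s_n}{a_n}\big(S_n - \bbE[S_n]\big)\right)\right] = \frac{t^2}{2},
$$
since the Legendre transform of $t \mapsto t^2/2$ is $x \mapsto x^2/2$ and this rate function is finite and differentiable everywhere, so Gärtner–Ellis yields the full large deviations upper and lower bounds with the stated rate function (the factor $s_n = a_n^2/n$ is precisely the MDP speed, and $s_n/a_n = a_n/n \to 0$). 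Because $a_n/\sqrt n \to \infty$ but $a_n = o(n)$, the relevant scaling parameter $\lambda_n := t s_n / a_n = t a_n / n$ tends to zero, while $\lambda_n \sigma_n \to \infty$; this is the regime in which the log-moment generating function is governed by the quadratic (Gaussian) term of the expansion.

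The key steps, in order, are as follows. First I would invoke Proposition \ref{Approx Prop} to put the sequence $(X_k)$ in the non-stationary Bernoulli-shift framework of \cite{NewBE}: the summands are uniformly bounded in $L^\infty$ and admit exponential $L^p$ approximation by finite blocks. Second, I would use the block / martingale-approximation machinery of \cite{NewBE} — essentially the same input that gives Theorems \ref{BE} and \ref{ASIP} — to obtain the needed asymptotics of the moment generating function of $S_n - \bbE[S_n]$ at small complex/real parameters $\lambda_n \to 0$. Concretely, uniform boundedness of the $X_k$ plus the exponential mixing coefficients give a second-order Taylor-type estimate
$$
\ln \bbE\!\left[e^{\lambda (S_n - \bbE[S_n])}\right] = \tfrac12 \lambda^2 \sigma_n^2 + O\!\left(|\lambda|^3 n\right)
$$
valid for $|\lambda|$ bounded, with constants uniform in $n$; this is a cumulant expansion and the $O(|\lambda|^3 n)$ remainder is the standard bound for cumulants of weakly dependent uniformly bounded arrays (each of the $n$ third-order cumulant contributions is $O(1)$ by summable mixing). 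Third, substituting $\lambda = \lambda_n = t a_n/n$ and dividing by $s_n = a_n^2/n$: the main term becomes $\tfrac12 t^2 (a_n^2/n^2)\sigma_n^2 \cdot (n/a_n^2) = \tfrac12 t^2 \sigma_n^2/n \to \tfrac12 t^2 c^2$ if $\sigma_n^2/n \to c^2$, and the remainder becomes $O(|t|^3 (a_n/n)^3 n)\cdot (n/a_n^2) = O(|t|^3 a_n/n) \to 0$. To avoid assuming $\sigma_n^2/n$ converges (we only assume $\liminf \sigma_n/\sqrt n > 0$), I would instead run the Gärtner–Ellis argument with the normalization $(S_n - \bbE[S_n])/\sigma_n$ rescaled by $a_n/\sqrt n \cdot (\sqrt n/\sigma_n)$ — i.e. work with $\widetilde S_n = (S_n-\bbE[S_n])/\sigma_n$ and speed $\tilde s_n = a_n^2/n$, for which the limiting cumulant generating function is exactly $t^2/2$ regardless of the precise value of $\sigma_n^2/n$, and then transfer back using that $\sigma_n/\sqrt n$ is bounded away from $0$ and, by Proposition \ref{MomAss} / uniform boundedness, bounded above; the factor $\sigma_n/a_n = (\sigma_n/\sqrt n)(\sqrt n/a_n)$ and a change of the convex rate function under a bounded rescaling then gives the stated $-\tfrac12 \inf x^2$ bounds. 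Finally, once $\Lambda(t) = t^2/2$ is established, I would quote the Gärtner–Ellis theorem (e.g. Dembo–Zeitouni) to conclude.

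I expect the main obstacle to be Step 2–3: establishing the cumulant expansion $\ln \bbE[e^{\lambda(S_n - \bbE[S_n])}] = \tfrac12\lambda^2\sigma_n^2 + O(|\lambda|^3 n)$ with constants uniform in $n$ for the non-stationary, only-weakly-dependent array $(X_k)$. In the stationary setting this is classical, but here one must control all third- and higher-order joint cumulants of $\{X_k\}$ using only the exponential $L^p$-approximation of Proposition \ref{Approx Prop} rather than genuine independence; the bound $\sum_{k_1 \le k_2 \le k_3} |\mathrm{cum}(X_{k_1},X_{k_2},X_{k_3})| = O(n)$ (and the analogous higher-order statements, summing to an entire function of $\lambda$) needs the block decomposition and telescoping martingale arguments of \cite{NewBE}, and care that the remainder is $O(|\lambda|^3 n)$ and not merely $o(\sigma_n^2)$ — the MDP range is more delicate than the CLT precisely here. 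Everything else — the choice of scaling, the passage through Gärtner–Ellis, and handling the non-convergence of $\sigma_n^2/n$ — is routine given that input and the hypotheses already in force.
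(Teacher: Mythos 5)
Your endgame---Gärtner--Ellis with $\Lambda(t)=t^2/2$---matches what the paper does, but the route to the log--MGF asymptotics is genuinely different. The paper obtains them as an immediate byproduct of the sequential spectral theory: Theorem~\ref{RPF} gives, for $|z|\le\delta_0$, a uniform triple $(\lambda_j(z),h_j^{(z)},\kappa_j^{(z)})$ analytic in $z$ with $\lambda_j(0)=1$ and exponentially fast convergence \eqref{Exp}, whence $\ln\bbE[e^{zS_n}]=\sum_{j<n}\ln\lambda_j(z)+O(1)$ uniformly in $|z|\le\delta_0$. The uniform analyticity of $\lambda_j$ then controls \emph{all} Taylor coefficients simultaneously (this is exactly the content of Proposition~\ref{LogGrowth}, $|\Lambda^{(s)}_{0,n}(t)|\le C_s\sigma_n^2$), and Gärtner--Ellis applies. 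You instead propose to bound joint cumulants of $(X_k)$ directly from the exponential $L^p$-approximation in Proposition~\ref{Approx Prop}, via block/telescoping estimates. That is a legitimate alternative (a Saulis--Statulevičius-type route, which would also give MDP), but you have explicitly left the central step---the cumulant bounds of all orders, uniformly in $n$, with a remainder that is $O(|\lambda|^3 n)$ and not just $o(\sigma_n^2)$---as a claim rather than an argument. So as written your plan has a real gap precisely where you flag it; the paper's spectral route is not just a cosmetic difference, it is what makes that step free.

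Two further remarks. First, you correctly notice that $\liminf\sigma_n/\sqrt n>0$ alone does not force $\sigma_n^2/n$ to converge, so the naive Gärtner--Ellis limit does not exist at the scale $S_n/a_n$; the rescaling you suggest (normalizing by $\sigma_n/\sqrt n$ as well) is the right fix, and both the paper's route and yours need it. Second, the paper's RPF route is doing more work elsewhere in the paper (it simultaneously gives Theorems~\ref{BE}--\ref{ASIP} and Propositions~\ref{MartProp}--\ref{LogGrowth}), so once it is available the MDP truly \emph{does} follow ``directly''; your cumulant route, even if completed, would only produce the MDP and CLT rates and would duplicate estimates already packaged inside Theorem~\ref{RPF}.
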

In Section \ref{Pert1} we will discuss for which types of perturbations of iid matrices we get the linear growth of the variance. Let us also refer to \cite{KiferRandRand} for certain random products in random ergodic environment for which we get the linear growth.

\section{Proof of the main abstract results}
\subsection{Proof of Proposition \ref{Approx Prop}}
\begin{proposition}\label{Exp Prop}
There exist $\ell,C>0$ and $\gamma\in(0,1)$ such that for every $j$ and $k$ we have
$$
\sup_{\bar x\not=\bar y}\bbP\left(\ln(d(g_{j,k}\bar x, \bar g_{j,k}\bar y)\geq -\ell k\right)\leq C\gamma^{k}.
$$
\end{proposition}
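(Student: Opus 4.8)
The plan is to exploit Assumption \ref{Ass1} together with a subadditivity / Markov-chain argument on blocks of length $n_0$. First I would reduce to blocks: write $k = m n_0 + r$ with $0 \le r < n_0$ and note that since each matrix satisfies $\sup_j\|N(g_j)\|_{L^\infty}<\infty$, the projective metric can contract or expand by at most a fixed multiplicative factor $e^{c n_0}$ over the leftover block of length $r$; hence it suffices to prove the estimate along the subsequence $k = m n_0$. Set $Z_i = \ln\big(d(g_{j + i n_0, n_0}\cdots g_{j,n_0}\bar x,\; g_{j + i n_0, n_0}\cdots g_{j,n_0}\bar y)/d(\cdots)\big)$, i.e. the log-ratio of the projective distance after applying the $i$-th block relative to before. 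By the cocycle property of the metric (the projective action composes), $\ln d(g_{j,mn_0}\bar x, g_{j,mn_0}\bar y) = \ln d(\bar x,\bar y) + \sum_{i=0}^{m-1} Z_i$, and $\ln d(\bar x,\bar y) \le 0$ since $d \le 1$.

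The key point is that each $Z_i$ is a bounded random variable (by \eqref{N g}, $|Z_i| \le C_0 := 2 n_0 \sup_j \|\ln N(g_j)\|_{L^\infty}$ say) and, conditionally on the previous blocks, has conditional expectation at most $-\delta$: this is exactly Assumption \ref{Ass1} applied with the pair of (random) directions $g_{j + (i-1)n_0,n_0}\cdots g_{j,n_0}\bar x$ and the analogous $\bar y$ image, which are measurable with respect to the previous blocks and still distinct a.s. Thus $\sum_{i=0}^{m-1}(Z_i + \delta)$ is a supermartingale-type sum of bounded increments with nonpositive conditional means, so a standard Azuma–Hoeffding / exponential-Markov bound gives, for any $\lambda \in (0, \lambda_0)$,
$$
\bbP\Big(\sum_{i=0}^{m-1} Z_i \ge -\tfrac{\delta}{2} m\Big) \le \bbP\Big(\sum_{i=0}^{m-1}(Z_i + \delta) \ge \tfrac{\delta}{2} m\Big) \le e^{-\lambda \delta m/2}\, \prod_{i} \bbE\big[e^{\lambda(Z_i+\delta)}\,|\,\mathcal F_{i-1}\big],
$$
and since $Z_i + \delta$ is bounded with nonpositive conditional mean, each conditional factor is $\le e^{c_1 \lambda^2}$ for small $\lambda$; optimizing $\lambda$ yields $e^{-c_2 m}$ for a constant $c_2 > 0$ depending only on $\delta, C_0$. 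Taking $\ell = \delta/(4 n_0)$ and translating back from $m$ to $k = mn_0$ (absorbing the leftover block $r < n_0$ into the constants $C$ and into a slight decrease of $\ell$) gives the claimed bound with $\gamma = e^{-c_2/n_0} \in (0,1)$, uniformly in $\bar x \ne \bar y$ and in $j$.

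The main obstacle I expect is the conditioning/measurability bookkeeping: one must check carefully that after applying the first $i$ blocks the images $\bar x_i$ and $\bar y_i$ are still almost surely distinct (so that the $\ln$-ratio is well-defined and Assumption \ref{Ass1} applies), and that Assumption \ref{Ass1} — which is stated for \emph{deterministic} distinct directions $\bar x, \bar y$ — can be applied with $\mathcal F_{i-1}$-measurable random directions. The latter is handled by a standard disintegration: conditioning on $\mathcal F_{i-1}$ freezes the directions and leaves $g_{j+in_0,n_0}$ independent of $\mathcal F_{i-1}$, so the bound $\bbE[Z_i \mid \mathcal F_{i-1}] \le -\delta$ follows from the supremum over deterministic pairs in Assumption \ref{Ass1}. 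Almost-sure distinctness of the images is automatic since the $g_j$ are invertible, hence act injectively on the projective space. Everything else is the routine exponential-moment estimate for bounded martingale differences.
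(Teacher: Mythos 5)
Your argument is correct, and it reaches the conclusion by a somewhat different decomposition than the paper's. The paper follows \cite[Lemma 6]{LimMat}: it centers the \emph{one-step} increments, writing $\ln\big(d(g_{j,k}\bar x,g_{j,k}\bar y)/d(\bar x,\bar y)\big)=M_{j,k}+R_{j,k}$ with $M_{j,k}$ a uniformly bounded martingale (handled by Azuma plus a Chernoff bound, exactly as in your exponential-moment step) and $R_{j,k}$ the sum of the one-step conditional drifts evaluated along the random trajectory; since the individual drift terms need not be negative, the exponential bound $\bbP(R_{j,k}\geq-\alpha k)\leq C\eta^k$ is obtained by repeating the separate argument of \cite{LimMat}, using only that block sums of length $n_0$ have expectation $\leq-\delta$ by Assumption \ref{Ass1}. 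You instead condition at the \emph{block} scale: grouping the increments into blocks $Z_i$ of length $n_0$, independence of the matrices plus the supremum over deterministic directions in Assumption \ref{Ass1} gives $\bbE[Z_i\mid\mathcal F_{i-1}]\leq-\delta$ directly (your disintegration remark and the injectivity of invertible matrices on the projective space settle the measurability and distinctness points), so a single Azuma/Hoeffding bound for a bounded-increment supermartingale finishes the proof, with the leftover block $r<n_0$ absorbed via the uniform bound \eqref{N g} on the per-matrix distortion of the projective metric. The payoff of your route is that it is self-contained — it avoids importing the drift estimate from \cite{LimMat} — at the price of hiding the martingale/drift structure that the paper reuses verbatim when it passes to the Markov-dependent setting in Section \ref{Markov} (there the one-step drifts are conditioned on $g_{j+m-1}$ and the same decomposition goes through); note, though, that your block argument would also adapt to that setting, since Assumption \ref{Ass2} provides the conditional block drift directly.
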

\begin{proof}
The proof is a modification of the proof of \cite[Lemma 6]{LimMat}. The main difference here is that we will get uniformly bounded martingales and so we can apply the Azuma inequality instead of complete convergence for martingales. See the end of the proof.    Let us define 
$$
F_j(\bar x,\bar y)=\bbE\left[\ln\left(\frac{d(g_j\bar x,g_j\bar y)}{d(\bar x,\bar y)}\right)\right]
$$
and 
$$
\sig_j(g, (\bar x,\bar y))=\ln\left(\frac{d(g\bar x,g\bar y)}{d(\bar x,\bar y)}\right)-F_j(\bar x,\bar y).
$$
Then 
$$
\ln\left(\frac{d(g_{j,k}\bar x, \bar g_{j,k}\bar y}{d(\bar x,\bar y)}\right)=M_{j,k}+R_{j,k}
$$
where 
$$
R_{j,k}=R_{j,k}(\bar x,\bar y)=\sum_{m=1}^{k}F_{j+m}(g_{j,m}\bar x,g_{j,m}\bar y)
$$
and 
$$
M_{j,k}=\sum_{m=1}^{k}\sigma_{j+m}(g_{j+m},(g_{j,m}\bar x,g_{j,m}\bar y)).
$$
Note that for every fixed $j$ we have that $\sigma_{j+m}(g_{j+m},(g_{j,m}\bar x,g_{j,m}\bar y))$ is uniformly (in $j,m$ and $\bar x$ and $\bar y$) bounded martingale difference (as $N(g_s), s\geq 1$ are uniformly bounded random variables).
Notice that by Assumption \ref{Ass1} for all $j$ and unit vectors $\bar x,\bar y$ we have 
$$
\bbE[R_{j,n_0}(\bar x,\bar y)]\leq -\delta.
$$
Using that, repeating the arguments in the proof of  \cite[Lemma 6]{LimMat} we see that there exist constants $C,\alpha>0$ and $\eta\in(0,1)$ such that 
$$
\sup_j\bbP(R_{j,k}(\bar x,\bar y)\geq -\alpha k)\leq C\eta^k.
$$
Finally, by applying the Azuma inequality and using the Chernoff bounding method we conclude that there is a constant $c>0$ such that for every $j,k$ and $\varepsilon>0$ we have 
$$
\bbP(M_{j,k}\geq \varepsilon k)\leq e^{-c\varepsilon^2 k}.
$$
By taking $\varepsilon$ small enough we get the desired result.
\end{proof}

\begin{proof}[Proof of Proposition \ref{Approx Prop}]
First, since $X_k$ are uniformly bounded it is enough to prove the proposition in the case $p=1$.
Next, by \cite[Lemma 12.2]{BeQu} for every two unit vectors $x,y$ we have 
\begin{equation}\label{Lip}
\|\sigma(g,y)-\sigma(g,x)\|\leq CN(g)d(x,y).    
\end{equation}
Denote by $X_{k,x}$ the value of $X_k$ when $x_0=x$. Let us take two unit vectors $x$ and $y$ and set $B=\{\ln(d(S_{k-1}(x),S_{k-1}(y))\geq -\ell k)$. Then
$$
|X_{k,x}-X_{k,y}|=|\sigma(g_k,S_{k-1}(x))-\sigma(g_k,S_{k-1}(y))|\bbI_{B}+|\sigma(g_k,S_{k-1}(x))-\sigma(g_k,S_{k-1}(y))|\bbI_{B^c}.
$$
Now, as $N(g_k)$ are uniformly bounded, using also \eqref{Lip} , we conclude that 
$$
\|X_{k,x}-X_{k,y}\|_{L^1}\leq C_1\bbP(B)+C_1e^{-\ell k}.
$$
Now by Proposition \ref{Exp Prop} we have $\bbP(B)\leq C\gamma^k$. Thus there exists $\delta\in(0,1)$ and a constant $C_0>0$ such that 
$$
\|X_{k,x}-X_{k,y}\|_{L^1}\leq C_0\delta^k.
$$
Fixing some $j<k$ and starting the multiplication from $g_j$ instead of $g_1$ we see that
$$
\|\sigma(g_{k}, g_{j,k-1-j}x)-\sigma(g_{k}, g_{j,k-1-j}y)\|_{L^1}\leq C\delta^{k-j}.
$$
Therefore, if we denote by $\mu_j$ the law of $g_j$ then
$$
\|f_{k}(g_k,g_{k-1},...,g_1)-\sigma(g_{k}, g_{j,k-1-j}x_0)\|_{L^1}
$$
$$
=\int\bbE\left[|f_{k}(g_k,g_{k-1},...,g_j,h_{j-1},...,h_1)-\sigma(g_{k}, g_{j,k-1-j}x_0)|\right]d\mu_1(h_1)\ldots d\mu_{j-1}(h_{j-1})\leq  C\delta^{k-j}.
$$
Hence, by the minimization property of condition expectations,
$$
\|f_{k}(g_k,g_{k-1},...,g_1)-\bbE[f_{k}(g_k,g_{k-1},...,g_1)|g_k,...,g_{j}]\|_{L^1}\leq C\delta^{k-j}
$$
and the proof of the proposition is complete.
\end{proof}

\subsection{Reversing time}
Let us take a fixed invertible matrix $A$ which is logarithmically contracting and set $g_k=A$ for $k<1$. Define $Y_k=g_{-k}$ which is also a sequence of independent invertible matrices. So we get 
$$
f_k(g_{k},g_{k-1},...,g_1)=f_k(Y_{-k},Y_{-k+1},....).
$$
For all complex $z$ define an operator $L_{j,z}$ which maps a function $g$ on $X=(\text{GL}(d,\bbR))^{\bbN}$ to a function $L_{j,z}g$  on $X$ given by   
$$
L_{j,z}g(x)=\bbE[g(Y_{-j},Y_{-j+1},....)e^{zf_j(Y_{-j},Y_{-j+1},....)}|(Y_{-j+1},Y_{-j+2},...)=x].
$$
Let $\rho$ be the number from Proposition \ref{Approx Prop} and let us define a norm
$$
\|g\|_{j,\rho}=\|g(Y_{-j},Y_{-j+1},....)\|_{L^\infty}+\sup_r\rho^{-r}\|f_k-\bbE[f_k|g_k,g_{k-1},...,g_{k-r}]\|_{L^1}.
$$
The following theorem is proven exactly like in \cite{NewBE}. Denote by $B$ the space of function $g$ with $\|g\|_{\infty,1,\rho}<\infty$. Denote by $\kappa_j$ the law of $(Y_{-j},Y_{-j+1},...)$.
\begin{theorem}\label{RPF}
There exists $0<\delta_0<1$ such that for every $z\in\bbC$ with $|z|\leq\delta_0$ there are $\lambda_j(z)\in\bbC\setminus\{0\}$, $h_j^{(z)}\in B$ and $\ka_j^{(z)}\in B^*$ such that $\mu_j^{(Z)}(\textbf{1})=\mu_j^{(Z)}(h_j^{(Z)})=1$, 
 $\lambda_j(0)=1$, $h_j^{(0)}=\textbf{1}$, $\ka_j^{(0)}=\ka_j$
and
\begin{equation}\label{UP}
  L_{j,z} h_j^{(z)}=\lambda_j(z)h_{j+1}^{(z)},\, (L_{j,z})^*\ka_{j+1}^{(z)}=\lambda_j(t)\ka_{j}^{(z)}.   
\end{equation}
Moreover, $t\to\lambda_j(z)$, $t\to h_j^{(z)}$ and $t\to\mu_j^{(z)}$ are analytic functions of $z$ with uniformly (over $z$ and $j$) bounded norms.. 
Finally,  there are $C_1>0, \delta_1\in(0,1)$ such that for every $g\in B$ and all $n$,
\begin{equation}\label{Exp}
 \left\|L_{j,z}^{n}g-\lambda_{j,n}(z)\ka_j^{(z)}(g)h_{j+n}^{(t)}\right\|_{\infty,1,\rho}\leq C_1\|g\|_{\infty,1,\rho}\delta_1^n   
\end{equation}
where $\lambda_{j,n}(z)=\prod_{k=j}^{j+n-1}\lambda_k(z)$ and 
$L_{j,z}^{n}=L_{j+n-1,z}\circ\cdots\circ L_{j,z}$.
\end{theorem}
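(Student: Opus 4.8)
\textbf{Proof plan for Theorem \ref{RPF}.} The strategy is to recognize that Proposition \ref{Approx Prop}, after the time reversal performed in Section 3.2, places the family of operators $L_{j,z}$ exactly in the framework of transfer operators for non-stationary (sequential) expanding-type systems treated in \cite{NewBE}; hence the Ruelle–Perron–Frobenius-type conclusions are obtained by quoting and verifying the hypotheses of the corresponding result there. Concretely, I would first record the two structural facts that make the abstract machinery applicable: (a) the functions $f_j$ are uniformly bounded in $L^\infty$ (from \eqref{N g}, \eqref{N g1}), and (b) by Proposition \ref{Approx Prop} they satisfy the exponential approximation property, i.e. $\sup_j\|f_j\|_{\infty,1,\rho}<\infty$, which is precisely the condition that controls the operators $L_{j,z}$ on the Banach space $B$. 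From (a)–(b) one deduces that each $L_{j,z}$ is a bounded operator on $B$ whose norm is uniform in $j$ and in $z$ with $|z|\le\delta_0$, and that $z\mapsto L_{j,z}$ is analytic (the only $z$-dependence is through the entire function $e^{zf_j}$, and $f_j$ is bounded, so the power series in $z$ converges in operator norm uniformly in $j$).

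Next I would establish the cone/positivity structure at $z=0$: $L_{j,0}$ is a Markov operator with $L_{j,0}\mathbf 1=\mathbf 1$ and dual fixed point $\kappa_j$ (the law of $(Y_{-j},Y_{-j+1},\dots)$), and one checks the requisite Doeblin-type or Lasota–Yorke inequality on $\|\cdot\|_{\infty,1,\rho}$. The key quantitative input for the latter is again Proposition \ref{Approx Prop}: conditioning on finitely many coordinates contracts the "oscillation part" of the norm geometrically, which yields a uniform spectral gap for the zero-parameter cocycle $L_{j,0}^n$, i.e. \eqref{Exp} with $z=0$. With the uniform gap at $z=0$ in hand, the full statement for $|z|\le\delta_0$ follows from analytic perturbation theory for sequential/non-autonomous operator cocycles: one constructs $\lambda_j(z)$, $h_j^{(z)}$, $\kappa_j^{(z)}$ by a fixed-point/graph-transform argument (as in \cite{NewBE}), normalizes so that $\kappa_j^{(z)}(\mathbf 1)=\kappa_j^{(z)}(h_j^{(z)})=1$, and obtains analyticity and uniform bounds because the perturbation is small relative to the uniform gap. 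The relations \eqref{UP} are the defining eigen-equations of this construction, and \eqref{Exp} is the persistence of the gap under the (uniformly small) perturbation.

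I expect the main obstacle to be the \emph{non-stationarity}: unlike the classical RPF theorem there is no single operator with an eigenvalue, so "spectral gap" must be replaced by the statement \eqref{Exp} about the cocycle $L_{j,z}^n$, and the eigendata $(\lambda_j(z),h_j^{(z)},\kappa_j^{(z)})$ are an infinite family linked by \eqref{UP} rather than a single triple. Proving \eqref{Exp} at $z=0$ therefore requires a genuinely sequential argument — showing the geometric contraction of $\sup_r\rho^{-r}\|g-\mathbb E[g\mid \text{first }r\text{ coords}]\|_{L^1}$ under iteration of $L_{j,0}$, uniformly in the starting index $j$ — and then verifying that this contraction is stable under the complex perturbation, uniformly in $j$. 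Since all of these steps are carried out in \cite{NewBE} under exactly the hypotheses (uniform boundedness plus exponential approximation on the reversed sequence) that we have just verified, the proof reduces to invoking that reference; I would state this explicitly and only indicate the verification of its hypotheses rather than reproduce the perturbation argument in full.
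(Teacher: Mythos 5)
Your proposal matches the paper's treatment exactly: the paper simply records that, after time reversal, Proposition \ref{Approx Prop} and the uniform boundedness from \eqref{N g} place the operators $L_{j,z}$ in the framework of \cite{NewBE}, and then states that the theorem "is proven exactly like in \cite{NewBE}." Your additional sketch of the internal structure of that argument (analyticity via $e^{zf_j}$, the gap at $z=0$, and analytic perturbation of the sequential RPF triple) is consistent with that reference and does not diverge from the paper's route.
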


Using this theorem we are able to prove the following results similarly to \cite{NewBE}, and we leave the notational modifications to the reader.

\begin{proposition}\label{MartProp}
We can write 
$$
f_j-\bbE[f_j]=M_j+u_{j+1}\circ T_j-u_j,\,\ \mu_j-\text{a.s.}
$$
where $\sup_j\|u_j\|_{\infty,p,\rho^{1/p}}<\infty$ and
$\sup_j\|M_j\|_{\infty,p,\rho^{1/p}}\!\!<\!\!\infty$ for all finite $p\geq 1$, $u_j$ and $M_j$ have zero mean, the depend only on $g_j,...,g_1$ and $M_j(g_j,g_{j-1},...g_1),j\geq 0$ is a reverse martingale difference  with respect to the reverse filtration $\cG_j=\sigma\{g_j,g_{j-1},...,g_1\}$. 
\end{proposition}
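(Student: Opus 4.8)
The plan is to obtain the martingale--coboundary decomposition of $f_j - \bbE[f_j]$ from the spectral data provided by Theorem \ref{RPF}, working on the time-reversed side where the transfer operators $L_{j,z}$ act. First I would differentiate the eigenvalue relation \eqref{UP} and the normalizations $\kappa_j^{(z)}(\mathbf{1}) = \kappa_j^{(z)}(h_j^{(z)}) = 1$ at $z=0$ to extract the ``linear response'' objects: writing $\dot{h}_j = \frac{d}{dz}\big|_{z=0} h_j^{(z)}$ and recalling $L_{j,0} = L_j$ is the (Markov) transition operator dual to $\kappa_j$, differentiation of $L_{j,z} h_j^{(z)} = \lambda_j(z) h_{j+1}^{(z)}$ at $z=0$ gives
$$
L_j(f_j \cdot \mathbf{1}) + L_j \dot{h}_j = \lambda_j'(0)\mathbf{1} + \dot{h}_{j+1},
$$
i.e. $\mathbb{E}[f_j \mid \cG_{j-1}\text{-type conditioning}] + L_j\dot h_j = \lambda_j'(0) + \dot h_{j+1}$, where the conditional expectation is the one built into $L_j$ (integrating out the ``newest'' coordinate $Y_{-j}$, which corresponds to $g_j$). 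Since $\lambda_j'(0) = \bbE[f_j]$ (the $z$-derivative of $\lambda_j$ at $0$ is always the mean of the summand — this is standard and proved exactly as in \cite{NewBE}), this is already the desired identity after translating back to forward time: set $u_j := \dot{h}_j$ composed with the reversal, and $M_j := f_j - \bbE[f_j] - u_{j+1}\circ T_j + u_j$.

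Second, I would verify the three required properties of $(M_j, u_j)$. Boundedness $\sup_j \|u_j\|_{\infty,1,\rho} < \infty$ is immediate from the last sentence of Theorem \ref{RPF}, which asserts $z \mapsto h_j^{(z)}$ is analytic with uniformly bounded norms, hence its derivative at $0$ has uniformly bounded norm; since $f_j$ and $u_{j+1}\circ T_j$ are also uniformly bounded in the relevant norm, so is $M_j$, and then $\sup_j\|M_j\|_{\infty,p,\rho^{1/p}} < \infty$ for all finite $p$ follows because these norms are monotone/comparable in $p$ (the $L^\infty$ part dominates the $L^p$ tail). The zero-mean property of $u_j$ can be arranged by subtracting $\kappa_j(\dot h_j)$ — a harmless constant shift that does not affect the coboundary structure — and then $M_j$ has zero mean automatically. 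The dependence of $u_j$ and $M_j$ on only $g_j, \dots, g_1$ is where reversing time pays off: on the reversed side $h_j^{(z)}$ is a function of $(Y_{-j}, Y_{-j+1}, \dots) = (g_j, g_{j-1}, \dots)$, which, when restricted to the actual finite sequence (with $g_k = A$ for $k<1$), is a function of $g_j, \dots, g_1$ as claimed. The crucial reverse-martingale-difference property, $\bbE[M_j \mid \cG_{j-1}] = 0$ with $\cG_{j-1} = \sigma\{g_{j-1}, \dots, g_1\}$, is exactly the content of the differentiated eigenvalue relation above: applying the conditional expectation that integrates out $g_j$ to $M_j = f_j - \bbE[f_j] - u_{j+1}\circ T_j + u_j$ and using $\bbE[u_{j+1}\circ T_j \mid \cG_{j-1}] = L_j \dot h_j$ (shifted appropriately) together with $\bbE[f_j \mid \cG_{j-1}] + L_j\dot h_j = \bbE[f_j] + \dot h_{j+1}$ leaves $0$.

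The main obstacle I expect is purely bookkeeping: matching the ``reversed'' filtration and operator conventions of Theorem \ref{RPF} to the forward reverse-filtration $\cG_j$ in the statement, and making sure the conditional-expectation operator encoded in $L_{j,0}$ really is the one that integrates out $g_j$ given $g_{j-1},\dots,g_1$ (as opposed to some other marginal). Once the dictionary between $(Y_{-j})$ on $X = (\mathrm{GL}(d,\bbR))^{\bbN}$ and the finite forward sequence is fixed, everything is a routine transcription of the corresponding argument in \cite{NewBE}; indeed the paper already signals this by saying the result follows ``similarly to \cite{NewBE}, and we leave the notational modifications to the reader.'' A secondary point worth stating carefully is that the series $\sum_{j\geq 0}\mathrm{Var}(M_j)$ need not converge here — that is a separate hypothesis appearing only in Theorem \ref{Var them}(3) under the additional assumption of bounded variance — so at the level of Proposition \ref{MartProp} one only needs the decomposition with uniformly bounded pieces, not summability.
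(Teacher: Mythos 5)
Your proposal is sound in substance and does use Theorem \ref{RPF} as the paper intends, but it constructs the coboundary terms by a different mechanism than the route the paper points to. The paper defers to \cite{NewBE}, where the standard sequential (Gordin-type) construction works at the unperturbed parameter only: one defines $u_{j+1}$ as the convergent series obtained by iterating the Poisson equation $u_{j+1}=L_{j,0}\bigl(f_j-\bbE[f_j]+u_j\bigr)$, with convergence and the uniform $\|\cdot\|_{\infty,1,\rho}$ bounds coming from the exponential decay \eqref{Exp} at $z=0$ applied to the centered observables; the analytic family in $z$ is then used for the Berry--Esseen, moderate deviation and variance estimates, not for this decomposition. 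You instead take $u_j=\dot h_j$, the $z$-derivative of the RPF eigenfunction, and recover the same recursion by differentiating \eqref{UP}; this is a legitimate alternative: $\lambda_j'(0)=\bbE[f_j]$ follows from differentiating the dual relation in \eqref{UP} against $\mathbf 1$, the uniform bound on $\|\dot h_j\|_{\infty,1,\rho}$ follows from uniform analyticity via a Cauchy estimate on the disc $|z|\le \delta_0$, and $\kappa_j(\dot h_j)=0$ is automatic from differentiating the two normalizations $\kappa_j^{(z)}(\mathbf 1)=\kappa_j^{(z)}(h_j^{(z)})=1$ — so no ad hoc centering is needed (and a nonzero centering would not in fact be harmless: the increments of the subtracted constants would have to be absorbed into $M_j$, spoiling both its zero mean and the vanishing of $L_{j,0}M_j$). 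What each approach buys: yours gets $u_j$ ready-made from the spectral data at the price of invoking the full analytic family where only the $z=0$ contraction is needed; the series route is more economical and transfers verbatim from \cite{NewBE}. Two repairs to your sketch. First, the identity you quote for the martingale step is garbled: the correct statement is $L_{j,0}(u_{j+1}\circ T_j)=u_{j+1}\cdot L_{j,0}\mathbf 1=u_{j+1}$ (not $\bbE[u_{j+1}\circ T_j\mid\cG_{j-1}]=L_j\dot h_j$), which combined with $L_{j,0}\bigl(f_j-\bbE[f_j]+\dot h_j\bigr)=\dot h_{j+1}$ gives $L_{j,0}M_j=0$, i.e. the reverse martingale difference property, since $L_{j,0}$ is exactly the conditional expectation integrating out $g_j$ given $g_{j-1},\dots,g_1$. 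Second, the passage from $p=1$ to all finite $p$ should be justified as in the proof of Proposition \ref{Approx Prop}, via $\|g-\bbE[g|\cdot]\|_{L^p}^p\le (2\|g\|_{L^\infty})^{p-1}\|g-\bbE[g|\cdot]\|_{L^1}$, rather than by an appeal to monotonicity of the norms. Finally, the bookkeeping you defer is genuinely delicate here, because the paper's own reversed-time labelling is shifted (as literally defined, $L_{j,z}$ sends functions of $(g_j,g_{j-1},\dots)$ to functions of $(g_{j-1},\dots)$, while \eqref{UP} labels the output $h_{j+1}^{(z)}$); a written-out proof must fix one consistent convention so that the coboundary term actually telescopes along the product and $M_j$, $u_j$ are measurable with respect to $\sigma\{g_j,\dots,g_1\}$ as claimed.
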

This result together with the main results in \cite{BDH} complete the proof of Theorem \ref{Var them}.

\begin{proposition}\label{MomAss}
For every $p\geq 2$ there exists a constant $C_p>0$ such that for every $j$ and $n$ we have 
$$
\|S_{j,n}-\bbE[S_{j,n}]\|_{L^p}\leq C_p(1+\|S_{j,n}-\bbE[S_{j,n}]\|_{L^2})
$$
where $S_{j,n}=X_{j}+X_{j+1}+...+X_{j+n-1}$.
\end{proposition}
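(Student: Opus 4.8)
The plan is to derive Proposition \ref{MomAss} from the spectral data supplied by Theorem \ref{RPF} applied to the reversed sequence, exactly in the spirit of \cite{NewBE}. First I would express the moment generating function of $S_{j,n}-\bbE[S_{j,n}]$ in terms of the transfer operators: writing $S_{j,n}=f_j+f_{j+1}\circ T_j+\cdots$ via the cocycle structure and reversing time, one has for $|z|\le\delta_0$
$$
\bbE\left[e^{z(S_{j,n}-\bbE[S_{j,n}])}\right]=e^{-z\bbE[S_{j,n}]}\,\ka_{j+n}\!\left(L_{j,z}^{n}\mathbf{1}\right),
$$
and by \eqref{Exp} this equals $e^{-z\bbE[S_{j,n}]}\lambda_{j,n}(z)\big(1+O(\delta_1^n)\big)$ up to the uniformly bounded functionals and eigenfunctions. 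Setting $\Pi_{j,n}(z)=\ln\lambda_{j,n}(z)=\sum_{k=j}^{j+n-1}\ln\lambda_k(z)$, the analyticity and uniform boundedness of $z\mapsto\lambda_k(z)$ on $|z|\le\delta_0$ (from Theorem \ref{RPF}) give a uniform bound $|\Pi_{j,n}^{(m)}(0)|\le C^m m!\, n$ for the derivatives, and in particular $\Pi_{j,n}''(0)$ is comparable to $\sigma_{j,n}^2:=\|S_{j,n}-\bbE[S_{j,n}]\|_{L^2}^2$ (this is the content of the variance identification underlying Theorem \ref{Var them}, up to an $O(1)$ correction coming from the coboundary terms in Proposition \ref{MartProp}).

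Next I would turn this into the claimed $L^p$ bound. The cumulants $\Gamma_{j,n}^{(m)}$ of $S_{j,n}-\bbE[S_{j,n}]$ satisfy $\Gamma_{j,n}^{(m)}=\Pi_{j,n}^{(m)}(0)+O(1)$ for each fixed $m$, again by \eqref{Exp}; hence $|\Gamma_{j,n}^{(m)}|\le C_m(1+n)$ and also $|\Gamma_{j,n}^{(m)}|\le C_m(1+\sigma_{j,n}^2)$ once we observe that $n\le C(1+\sigma_{j,n}^2)$ fails in general — so the correct route is to bound the third and higher derivatives of $\Pi_{j,n}$ directly by $\sigma_{j,n}^2$. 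This is the standard ``bounded random variables'' trick: since $\sup_k\|X_k\|_{L^\infty}<\infty$, a Bernstein/Azuma-type argument (or directly differentiating the product $\prod_k\lambda_k(z)$ and using $|\ln\lambda_k(z)-\tfrac12 z^2\mathrm{Var}_k|\le C|z|^3$) shows $|\Pi_{j,n}^{(m)}(0)|\le C_m\sum_{k=j}^{j+n-1}\mathrm{Var}(\text{local part of }X_k)\le C_m(1+\sigma_{j,n}^2)$ for all $m\ge 2$. With cumulant bounds $|\Gamma_{j,n}^{(m)}|\le C_m(1+\sigma_{j,n}^2)$ in hand, the moment–cumulant formula expresses $\bbE[(S_{j,n}-\bbE[S_{j,n}])^{2p}]$ as a finite sum of products of at most $p$ cumulants, each product dominated by $C_p(1+\sigma_{j,n}^2)^p$, which yields $\|S_{j,n}-\bbE[S_{j,n}]\|_{L^{2p}}\le C_p(1+\sigma_{j,n})$ and hence the proposition for all real $p\ge 2$ by interpolation.

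The main obstacle I anticipate is the bookkeeping that makes ``$\Pi_{j,n}^{(m)}(0)$ is controlled by $\sigma_{j,n}^2$ rather than merely by $n$'' precise and uniform in $j$. One has to be careful that the difference between $\sigma_{j,n}^2$ and $\sum_k \mathrm{Var}_k$ is genuinely $O(1)$: this requires the coboundary decomposition $f_j-\bbE[f_j]=M_j+u_{j+1}\circ T_j-u_j$ from Proposition \ref{MartProp}, with $\sup_j\|u_j\|_{\infty,p,\rho^{1/p}}<\infty$, so that $\mathrm{Var}(S_{j,n})=\mathrm{Var}\!\big(\sum_k M_k\big)+O(1)$ and the reverse-martingale-difference structure makes $\mathrm{Var}\!\big(\sum_k M_k\big)=\sum_k\mathrm{Var}(M_k)$ exactly. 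Once this identification is available, everything else is the routine cumulant estimation of \cite{NewBE}, and, as the authors say, the remaining notational modifications can be left to the reader.
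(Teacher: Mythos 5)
Your overall strategy matches what the paper does: invoke Theorem \ref{RPF} to write the cumulant generating function of $S_{j,n}-\bbE[S_{j,n}]$ as $\Pi_{j,n}(z)=\sum_k\ln\lambda_k(z)$ plus uniformly bounded boundary corrections, show the cumulants $\Gamma_{j,n}^{(m)}$ are $O(1+\sigma_{j,n}^2)$ uniformly in $j,n$, and then push that through the moment--cumulant formula to get $\|S_{j,n}-\bbE[S_{j,n}]\|_{L^{2p}}\le C_p(1+\sigma_{j,n})$. You also correctly flag that the na\"ive bound $|\Gamma_{j,n}^{(m)}|\le C_m(1+n)$ is useless here because $\sigma_{j,n}^2$ may grow sublinearly. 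That is exactly the point.

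The problem is in the step that is supposed to carry the load. You propose to conclude $|\Pi_{j,n}^{(m)}(0)|\le C_m\sum_k\text{Var}_k$ from the estimate
$$\bigl|\ln\lambda_k(z)-\tfrac12 z^2\,\mathrm{Var}_k\bigr|\le C|z|^3.$$
This estimate only says that the $m$-th Taylor coefficient of $\ln\lambda_k$ at $0$ is $O(1)$ for each $m\ge3$; summing over $k=j,\dots,j+n-1$ gives $|\Pi_{j,n}^{(m)}(0)|\le C_m\,n$, which is precisely the bound you just dismissed as insufficient. For the argument to work you need the stronger, variance-weighted local estimate, of the form
$$\bigl|\ln\lambda_k(z)-\tfrac12 z^2\,\mathrm{Var}_k\bigr|\le C|z|^3\,\mathrm{Var}_k,$$
i.e.\ $|(\ln\lambda_k)^{(m)}(0)|\le C_m\,\mathrm{Var}_k$ for all $m\ge2$. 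This is the genuine ``bounded random variables trick'': for a centered random variable with $|X|\le M$ one has $|\kappa_m(X)|\le C_m M^{m-2}\mathrm{Var}(X)$, because in the moment--cumulant expansion every block has size at least two and one factor can be traded for $M^{|B|-2}\mathrm{Var}(X)$. The nontrivial part (and the content of the corresponding step in \cite{NewBE}) is that the same bound survives when one replaces exact local cumulants by the derivatives of $\ln\lambda_k$ produced by the non-stationary Ruelle--Perron--Frobenius theory, using the uniform control on $\|f_k\|_{\infty,1,\rho}$ and on $\lambda_k, h_k^{(z)},\kappa_k^{(z)}$ from Theorem \ref{RPF}. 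Your proposal asserts this but does not actually derive it, and the particular inequality you quote would not give it. (Two small further points: the final ``interpolation'' is unnecessary since $\|\cdot\|_{L^q}$ is monotone in $q$; and one should also record that the $O(1)$ correction between $\Pi_{j,n}''(0)$ and $\sigma_{j,n}^2$ is uniform in $j$, which comes from the uniformity in Proposition \ref{MartProp}.)
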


and 
\begin{proposition}\label{LogGrowth}
There are constants
 $\del_k>0$ and $C_k>0$ such that for all $s\in\bbN$ we have
\begin{equation}\label{Add}
\sup_{t\in[-\del_k,\del_k]}|\Lambda^{(s)}_{0,n}(t)|\leq C_3\sig_n^2.   
\end{equation}
where $\Lambda_{0,n}(t)=\ln(\bbE[e^{it S_n}])$.
\end{proposition}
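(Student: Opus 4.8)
\textbf{Proof proposal for Proposition \ref{LogGrowth}.}

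The plan is to derive the bound on the derivatives of the log-moment generating function directly from the spectral data provided by Theorem \ref{RPF}. First I would recall that, writing $L_{0,it}^n\textbf{1}$ and integrating against $\kappa_0$, one has $\bbE[e^{itS_n}] = \kappa_0((L_{0,it}^n\textbf{1}))$ (after the time-reversal identification $S_n = \sum_j f_j(Y_{-j},Y_{-j+1},\dots)$ and using that $\kappa_0 = \kappa_0^{(0)}$ represents the law $\kappa_0$). Combining this with the exponential decay estimate \eqref{Exp} gives, for $|t|\le\delta_0$,
$$
\bbE[e^{itS_n}] = \lambda_{0,n}(it)\,\kappa_0^{(it)}(\textbf{1})\,\kappa_0(h_n^{(it)}) + O(\delta_1^n),
$$
where the error is uniform in $n$ and $t$. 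Hence $\Lambda_{0,n}(t) = \ln\lambda_{0,n}(it) + \ln\!\big(\kappa_0^{(it)}(\textbf{1})\kappa_0(h_n^{(it)})\big) + O(\delta_1^n/|\text{leading term}|)$, and the point is that on a possibly smaller interval $[-\delta_k,\delta_k]$ the leading term $\lambda_{0,n}(it)$ has modulus bounded below (it equals $1$ at $t=0$ and is close to $1$ for small $t$ since $\lambda_j(z)$ is analytic with $\lambda_j(0)=1$ uniformly in $j$), so the error term and its derivatives remain $O(\delta_1^n)$, which is $O(\sigma_n^2)$ trivially if $\sigma_n$ is bounded below, and otherwise still negligible.

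Next I would reduce everything to controlling $\Lambda_{0,n}(t) = \sum_{k=0}^{n-1}\ln\lambda_k(it)$ up to the harmless correction terms above, which are uniformly bounded in $\infty,1,\rho$-norm and analytic in $z$, so their $s$-th $t$-derivatives are $O(1)$ uniformly in $n$ (they do not even contribute at the scale $\sigma_n^2$). For the main term, set $\Pi_k(z) = \ln\lambda_k(z)$; each $\Pi_k$ is analytic on $|z|\le\delta_0$ with uniformly bounded norm (from the last sentence of Theorem \ref{RPF}), and $\Pi_k(0) = 0$, $\Pi_k'(0) = \bbE[f_k]$ (after centering one may assume this is absorbed). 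The standard identity is that $\Lambda_{0,n}''(0) = \sum_k \big(-\Pi_k''(0)\big) = \mathrm{Var}(S_n) = \sigma_n^2$ — this is essentially the content already used to pass from the transfer operator to the variance. For $s\ge 2$, the $s$-th derivative $\Lambda_{0,n}^{(s)}(t) = i^s\sum_{k=0}^{n-1}\Pi_k^{(s)}(it)$, and the key claim is that $|\Pi_k^{(s)}(it)| \le C_s\,|\Pi_k''(0)| + C_s\delta_1^{|k-?|}$-type bound is \emph{not} available termwise; instead one should bound $\sum_k |\Pi_k^{(s)}(it)|$ by $\sigma_n^2$ using the analyticity-plus-decay structure.

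The main obstacle — and the step I would spend the most care on — is precisely this: obtaining $\sum_{k=0}^{n-1}\sup_{|t|\le\delta_k}|\Pi_k^{(s)}(it)| = O(\sigma_n^2)$ rather than the trivial $O(n)$. The way to do it is the blocking/telescoping argument from \cite{NewBE}: one shows that $\Pi_k^{(s)}(it)$ is controlled by the ``local variance'' contributions $v_k := \mathrm{Var}(\text{block of }f\text{'s around }k)$, using that $f_j$ is approximable by a function of $g_j,\dots,g_{j-r}$ at rate $\rho^{r}$ (Proposition \ref{Approx Prop}), so that the cumulant generating functions over disjoint long blocks are nearly additive and each block's $s$-th cumulant is bounded by a constant times its second cumulant (a uniformly-bounded-summands Marcinkiewicz-type bound, exactly the phenomenon in Proposition \ref{MomAss}). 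Summing the block second cumulants recovers $\sigma_n^2$ up to a bounded error from the block boundaries (here one uses that $\mathrm{Var}(S_n)$ and the sum of block variances differ by $O(\sqrt{(\#\text{blocks})\cdot\sigma_n^2})$ or by a bounded amount, via Cauchy–Schwarz on the covariance terms, which are themselves exponentially small in block separation). I would carry this out exactly as in \cite{NewBE}, citing the relevant lemmas there, and note that the constants $\delta_k,C_k$ depend on $s=k$ only through how small an analyticity radius is needed to control the $k$-th derivative via Cauchy's estimate $|\Pi^{(s)}(it)| \le s!\,\delta_0^{-s}\sup_{|z|\le\delta_0}|\Pi(z)|$ on each block.
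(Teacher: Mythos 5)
Your reduction via Theorem \ref{RPF} to the pressure sum $\sum_{k<n}\ln\lambda_k(it)+O(1)$ is the same first step the paper indicates (it simply cites \cite{NewBE} and leaves the details to the reader). The problem is in the step you yourself flag as the main obstacle. You assert that a termwise bound $|\Pi_k^{(s)}(it)|\le C_s\bigl(|\Pi_k''(0)|+\delta_1^k\bigr)$ is not available and propose instead a blocking argument: cut into blocks of length $L$, bound each block's $s$-th cumulant by its variance, and sum, absorbing boundary effects as a small correction. This cannot work in the regime the paper is designed for, namely arbitrarily slow growth of $\sigma_n^2$. Since $\mathrm{Cov}(f_j,f_k)=O(\rho^{|j-k|})$, the covariance between two adjacent blocks is a two-sided geometric sum of order $O(1)$, not $o(1)$; hence $\sum_\ell\mathrm{Var}(S_{B_\ell})$ differs from $\sigma_n^2$ by $O(n/L)$, and likewise the factorization of $\bbE[e^{itS_n}]$ across blocks carries an $O(n\delta^L)$ error in the exponent. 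Neither of these is $O(\sigma_n^2)$ once $\sigma_n^2=o(n/L)$, and letting $L\to\infty$ turns the per-block cumulant-versus-variance estimate into a restatement of what you are trying to prove. (Your Cauchy--Schwarz estimate $O(\sqrt{(\#\text{blocks})\,\sigma_n^2})$ for the covariance correction is also not what the decay gives; the honest bound is $O(n/L)$.)

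The termwise bound you dismissed is in fact what closes the argument, and it is consistent with Proposition \ref{MartProp} and the cited source \cite{NewBE}. After the RPF change of measure, $\Pi_k^{(s)}(it)$ is a cumulant-type functional of the centered, uniformly bounded martingale increment $M_k$ (plus exponentially decaying corrections from the non-stationarity of the eigendata), which yields $|\Pi_k^{(s)}(it)|\le C_s\bigl(\mathrm{Var}(M_k)+\delta_1^k\bigr)$ on a fixed small interval in $t$. Orthogonality of the $M_k$ together with uniform boundedness of the $u_j$ give $\sum_{k<n}\mathrm{Var}(M_k)=\sigma_n^2+O(1)$, so summing produces the claim with no assumption on the growth rate of $\sigma_n$. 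The remaining correction $\ln\bigl(\kappa_0^{(it)}(\textbf{1})\,\kappa_0(h_n^{(it)})\bigr)$ is uniformly analytic with $O(1)$ derivatives and is harmless, exactly as in your first paragraph.
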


Using \ref{Add} all the Berry-Esseen theorems follow from the main result in \cite[Theorem 4 and Corollary 5]{NonUBE} and \cite[Theorem 9]{NonUEdge}.  Theorem \ref{MDP}  follows directly from Theorem \ref{RPF} like in \cite{NewBE}, while the proof of Theorem \ref{ConcentrationIneq} follows by the applying the Azuma-Hoeffding inequality and using the Chernoff bounding scheme.

The proof of Theorem \ref{ASIP} follow using similar argument to \cite[Theorem ]{DolgHaf PTRF 2 ARXIV}.

\section{Applications}
\subsection{Small perturbations of random contracting matrices}\label{Pert1}
In this section we will show that Assumption \ref{Ass1} is closed under certain type of perturbations. We thus get that all our results hold true for small perturbations of random iid strongly irreducible and proximal matrices\footnote{Namely, the group generated by the support of the common distribution on $\text{GL}(d,\bbR)$ is strongly irreducible and contains a proximal element}  (see Remark \ref{RemIID}). In the next sections we will study in detail small perturbations of certain classes of non-stationary independent $\text{GL}(D,\bbR)$-valued matrices. 

We first need the following relatively elementary result.
\begin{lemma}\label{ContLemma}
For two invertible matrices $A$ and $B$ and distinct directions $\bar x,\bar y\in P(\bbR^d)$ we have
$$
\frac{d(A\bar x,A\bar y)}{d(\bar x,\bar y)}\leq c(A,B)+\frac{d(B\bar x,B\bar y)}{d(\bar x,\bar y)} 
$$
where 
$$
c(A,B)=(\|A\|+\|B\|)\|A-B\|\left(\frac{1}{\sigma_{min}^2(A)}+\frac{\|B\|^2}{\sigma_{min}^2(A)\sigma_{min}^2(B)}\right)
$$
and $\sigma_{min}(\cdot)$ is the smallest singular value (the minimum on the unit circle).
\end{lemma}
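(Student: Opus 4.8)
\textbf{Proof proposal for Lemma \ref{ContLemma}.}

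The plan is to express the ratio $d(A\bar x,A\bar y)/d(\bar x,\bar y)$ in terms of norms of the linear maps acting on the wedge product, and then compare the expression for $A$ with that for $B$ by a telescoping/triangle-inequality argument. Recall that for unit vectors $x,y$ we have $d(\bar x,\bar y)=|x\wedge y|$ and, for a general nonzero vector $v$, the direction $\overline{Av}$ is represented by the unit vector $Av/\|Av\|$. Hence
$$
d(A\bar x,A\bar y)=\frac{|Ax\wedge Ay|}{\|Ax\|\,\|Ay\|}=\frac{|\det{}_2(A)\,(x\wedge y)|}{\|Ax\|\,\|Ay\|},
$$
where $\det_2(A)$ denotes the induced action of $A$ on $\wedge^2\bbR^d$; more simply, $|Ax\wedge Ay|\le\|A\|\,|x\wedge y|$ is really all we will need for the numerator, together with the lower bounds $\|Ax\|\ge\sigma_{\min}(A)$, $\|Ay\|\ge\sigma_{\min}(A)$ for the denominator (here $x,y$ are unit vectors). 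So the first step is the elementary bound
$$
\frac{d(A\bar x,A\bar y)}{d(\bar x,\bar y)}=\frac{|Ax\wedge Ay|}{\|Ax\|\,\|Ay\|\,|x\wedge y|}.
$$

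The second step is the comparison. Write $|Ax\wedge Ay|-|Bx\wedge By|$ and bound it via $|Ax\wedge Ay - Bx\wedge By|\le |(A-B)x\wedge Ay|+|Bx\wedge(A-B)y|\le \|A-B\|\,\|A\|+\|B\|\,\|A-B\| = (\|A\|+\|B\|)\|A-B\|$, using $|u\wedge v|\le\|u\|\|v\|$ and that $x,y$ are unit vectors. Similarly, I need to replace the denominator $\|Ax\|\|Ay\|$ (which I want to be $\sigma_{\min}^2(A)$ from below) while keeping track of the $B$-version; the cleanest route is to first bound
$$
\frac{d(A\bar x,A\bar y)}{d(\bar x,\bar y)}\le\frac{|Bx\wedge By|}{\sigma_{\min}^2(A)\,|x\wedge y|}+\frac{(\|A\|+\|B\|)\|A-B\|}{\sigma_{\min}^2(A)},
$$
and then rewrite the first term as $\dfrac{|Bx\wedge By|}{\|Bx\|\,\|By\|\,|x\wedge y|}\cdot\dfrac{\|Bx\|\,\|By\|}{\sigma_{\min}^2(A)}$, i.e.\ as $\dfrac{d(B\bar x,B\bar y)}{d(\bar x,\bar y)}\cdot\dfrac{\|Bx\|\,\|By\|}{\sigma_{\min}^2(A)}$. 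To match the claimed constant exactly, the factor $\dfrac{\|Bx\|\|By\|}{\sigma_{\min}^2(A)}$ should be written as $1+\left(\dfrac{\|Bx\|\|By\|}{\sigma_{\min}^2(A)}-1\right)$ and the correction term estimated: since $d(B\bar x,B\bar y)/d(\bar x,\bar y)\le \|B\|^2/\sigma_{\min}^2(B)$ (again from $|Bx\wedge By|\le\|B\|^2|x\wedge y|$ in the numerator and $\|Bx\|\|By\|\ge\sigma_{\min}^2(B)|$, hmm — more carefully, one bounds the whole product $\dfrac{d(B\bar x,B\bar y)}{d(\bar x,\bar y)}\cdot\dfrac{\|Bx\|\|By\|}{\sigma_{\min}^2(A)}$ directly by moving the spare $\|Bx\|\|By\|\le\|B\|^2$ into the numerator), one gets a contribution of order $\dfrac{\|B\|^2}{\sigma_{\min}^2(A)}\cdot\dfrac{\|A-B\|(\|A\|+\|B\|)}{\sigma_{\min}^2(B)}$, which is exactly the second summand inside the parentheses of $c(A,B)$. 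Collecting terms gives the stated inequality.

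The main obstacle is purely bookkeeping: getting the constant to come out in the precise form $(\|A\|+\|B\|)\|A-B\|\left(\sigma_{\min}^{-2}(A)+\|B\|^2\sigma_{\min}^{-2}(A)\sigma_{\min}^{-2}(B)\right)$ rather than merely "a constant of that order" requires being careful about which quantity is added and subtracted in the numerator versus the denominator, and about using $\|Bx\|\le\|B\|$, $\|Bx\|\ge\sigma_{\min}(B)$ in the right places. A convenient trick to avoid the two-stage replacement is to write the single identity
$$
\frac{d(A\bar x,A\bar y)}{d(\bar x,\bar y)}-\frac{d(B\bar x,B\bar y)}{d(\bar x,\bar y)}
=\frac{|Ax\wedge Ay|-|Bx\wedge By|}{\|Ax\|\|Ay\|\,|x\wedge y|}
+|Bx\wedge By|\left(\frac{1}{\|Ax\|\|Ay\|}-\frac{1}{\|Bx\|\|By\|}\right)\frac{1}{|x\wedge y|},
$$
and then estimate each of the two terms: the first by $(\|A\|+\|B\|)\|A-B\|/\sigma_{\min}^2(A)$ as above, and the second using $\left|\|Ax\|\|Ay\|-\|Bx\|\|By\|\right|\le (\|A\|+\|B\|)\|A-B\|$ (from $\big|\|Ax\|-\|Bx\|\big|\le\|A-B\|$ and the triangle inequality for products), together with $|Bx\wedge By|\le \|B\|^2|x\wedge y|$ and the denominators $\|Ax\|\|Ay\|\ge\sigma_{\min}^2(A)$, $\|Bx\|\|By\|\ge\sigma_{\min}^2(B)$, yielding the second summand. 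This is the form I would actually write up. No step requires anything deep — it is entirely elementary linear algebra once the wedge-product formula for the projective metric is in hand.
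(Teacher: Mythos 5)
Your decomposition is the same as the paper's (split $\frac{a}{\alpha}-\frac{b}{\beta}$ into $\frac{a-b}{\alpha}+b\bigl(\frac1\alpha-\frac1\beta\bigr)$ with $a=\|Ax\wedge Ay\|$, $\alpha=\|Ax\|\|Ay\|$, etc.), and your treatment of the second term is correct and identical to the paper's, yielding the summand $\|B\|^2\sigma_{\min}^{-2}(A)\sigma_{\min}^{-2}(B)(\|A\|+\|B\|)\|A-B\|$. The gap is in the first term. You bound the numerator by
$$
\bigl|\,\|Ax\wedge Ay\|-\|Bx\wedge By\|\,\bigr|\le \|(A-B)x\wedge Ay\|+\|Bx\wedge (A-B)y\|\le(\|A\|+\|B\|)\|A-B\|,
$$
using $|u\wedge v|\le\|u\|\|v\|$ for the \emph{unit} vectors $x,y$ — this estimate carries no factor of $\|x\wedge y\|$. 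Since the first term in your identity is divided by $\|Ax\|\|Ay\|\,|x\wedge y|$, what you actually get is $(\|A\|+\|B\|)\|A-B\|/\bigl(\sigma_{\min}^2(A)\,d(\bar x,\bar y)\bigr)$, not $(\|A\|+\|B\|)\|A-B\|/\sigma_{\min}^2(A)$; the $1/d(\bar x,\bar y)$ does not cancel and blows up as $\bar y\to\bar x$. This is fatal for the lemma as stated and for its use in Corollary \ref{Corr}, where $c(A,B)$ must be uniform over all pairs of directions (one takes $\sup_{\bar x\ne\bar y}$ and then expectations), and must be proportional to $\|A-B\|$.

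What is needed is the scaled bound $\bigl|\,\|Ax\wedge Ay\|-\|Bx\wedge By\|\,\bigr|\le(\|A\|+\|B\|)\|A-B\|\,\|x\wedge y\|$, and it cannot be obtained by applying $|u\wedge v|\le\|u\|\|v\|$ directly to $x,y$, because $\|x\wedge y\|$ may be arbitrarily small compared to $\|x\|\|y\|=1$ (e.g.\ $A=I$, $A-B$ a rank-one nilpotent, $x,y$ nearly parallel: the individual terms $\|(A-B)x\wedge Ay\|$ stay of order $1$ while $\|x\wedge y\|\to0$). The fix is to compare the induced maps on $\wedge^2\bbR^d$, which is how the paper produces the factor $d(\bar x,\bar y)$: write $x\wedge y=\|x\wedge y\|\,e\wedge f$ with $e,f$ an orthonormal basis of $\mathrm{span}(x,y)$, so that $Ax\wedge Ay=(\wedge^2A)(x\wedge y)=\|x\wedge y\|\,Ae\wedge Af$ and likewise for $B$; then your triangle-inequality estimate applied to the orthonormal pair $e,f$ gives
$$
\bigl|\,\|Ae\wedge Af\|-\|Be\wedge Bf\|\,\bigr|\le(\|A\|+\|B\|)\|A-B\|,
$$
and multiplying back by $\|x\wedge y\|$ yields the scaled bound (equivalently, use $\|\wedge^2A-\wedge^2B\|\le(\|A\|+\|B\|)\|A-B\|$). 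With that single correction the rest of your argument goes through and recovers the stated constant $c(A,B)$.
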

\begin{proof}
Let $x,y$ be two point on the unit circle with directions $\bar x$ and $\bar y$.
Denote 
$p=d(\bar x,\bar y)=\|x\wedge y\|$, $a=\|A x\wedge A y\|$, $\alpha=\|Ax\|\|Ay\|$, $b=\|B x\wedge B y\|$, $\beta=\|Bx\|\|By\|$.
We want to show that
$$
\frac{a}{\alpha}-\frac{b}{\beta}\leq pc(A,B).
$$
Write 
$$
\frac{a}{\alpha}-\frac{b}{\beta}=\frac{a-b}{\alpha}+b\left(\frac{1}{\alpha}-\frac{1}{\beta}\right).
$$
We have $1/\alpha\leq 1/\sigma_{min}^2(A)$. Now, we also have 
$$
|a-b|\leq \|Ax\wedge Ay-Bx\wedge By\|\leq\|(A-B)x\wedge Ay\|+\|Bx\wedge((A-B)y)\|. 
$$
The operator norm on $\wedge^2$ gives 
$$
\|((A-B)x)\wedge Ay\|\leq \|A-B\|\|A\|\|x\wedge y\|=\|A-B\|\|A\|d
$$
and similarly
$$
\|Bx\wedge((A-B)y)\|\leq \|A-B\|\|B\|d.
$$
Thus,
$$
\left|\frac{a-b}{\alpha}\right|\leq\|A-B\|(\|A\|+\|B\|)d (\sig_{min}(A))^{-2}. 
$$
Next,
$$
\left|\frac{1}{\alpha}-\frac{1}{\beta}\right|\leq 
|\alpha-\beta|(\sig_{min}(A)\sig_{min}(B))^{-2}.
$$
It is also clear that 
$$
|\alpha-\beta|\leq (\|A\|+\|B\|)\|A-B\|.
$$
As $b=\|Bx\wedge By\|\leq \|\wedge^2B\|p\leq\|B\|^2p$ we see that 
$$
b\left|\frac{1}{\alpha}-\frac{1}{\beta}\right|\leq (\sig_{min}(A)\sig_{min}(B))^{-2}\|B\|^2p (\|A\|+\|B\|)\|A-B\|.
$$
\end{proof}
\begin{remark}
 Note that for every invertible matrix $A$ and a unit vector $x$ we have $\|Ax\|\geq \|A^{-1}\|^{-1}$. Therefore we can replace $(\sig_{min}(A))^{-1}$ and $(\sig_{min}(B))^{-1}$ by $\|A^{-1}\|$ and $\|B^{-1}\|$, respectively.    Hence,
 $$
\frac{d(A\bar x,B\bar y)}{d(\bar x,\bar y)}\leq \tilde c(A,B)+\frac{d(B\bar x,B\bar y)}{d(\bar x,\bar y)} 
$$
where 
$$
\tilde c(A,B)=(\|A\|+\|B\|)\|A-B\|\left(\|A^{-1}\|^2+\|B\|^2\|A^{-1}\|^2\|B^{-1}\|^2\right).
$$
\end{remark}
\begin{corollary}\label{Corr}
Let $(g_j)$ and $(h_j)$ be two independent sequences of random invertible matrices such that Assumption \ref{Ass1} holds for the sequence $(h_j)$. If we can couple $(h_j)$ and $(g_j)$ such that  $\te:=\sup_j\bbE[\tilde c(g_{j,n_0},h_{j,n_0})]<\infty$. Then there exists a constant $\varepsilon_0=\varepsilon_0(\delta,n_0)$ such that  Assumption \ref{Ass1} holds for the sequence $(g_j)$ with some $\delta_1>0$ instead of $\delta$ and the same $n_0$ if $\te<\varepsilon_0$.

In particular, suppose that
 $C_1=\sup_j\|1+N(h_j)\|_{L^{8n_0}}<\infty$ and $C_2=\sup_j\|1+N(g_j)\|_{L^{8n_0}}<\infty$. Assume also
that we can couple $(g_j)$ and $(h_j)$ such that $\sup_j\|h_j-g_j\|_{L^{8n_0}}\leq \varepsilon$. Then there exists a constant $\varepsilon_0=\varepsilon_0(\delta,n_0,C_1,C_2)$ such that  Assumption \ref{Ass1} holds for the sequence $(g_j)$ with some $\delta_1>0$ instead of $\delta$ and the same $n_0$ if $\varepsilon<\varepsilon_0$.
\end{corollary}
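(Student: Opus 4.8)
The plan is to transfer the average logarithmic contraction from $(h_j)$ to $(g_j)$ by combining Lemma~\ref{ContLemma} (in the form of the remark after it) with the elementary bound $\ln(1+u)\le u$. Fix $j$ and distinct directions $\bar x,\bar y\in P(\bbR^d)$, abbreviate $A=g_{j,n_0}$, $B=h_{j,n_0}$, $r_A=d(A\bar x,A\bar y)/d(\bar x,\bar y)$, $r_B=d(B\bar x,B\bar y)/d(\bar x,\bar y)$ and $c=\tilde c(A,B)$. The remark gives $r_A\le c+r_B$ pointwise, hence
\[
\ln r_A\le \ln r_B+\ln\Big(1+\tfrac{c}{r_B}\Big)\le \ln r_B+\tfrac{c}{r_B}.
\]
Taking expectations over the coupling and using Assumption~\ref{Ass1} for $(h_j)$ (which controls the marginal of $B$) gives $\bbE[\ln r_A]\le -\delta+\bbE[c/r_B]$. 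Everything thus reduces to making $\bbE[c/r_B]$ smaller than, say, $\delta/2$, uniformly in $j$ and $\bar x\ne\bar y$, by taking $\theta$ (respectively $\varepsilon$) small.

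The next step is a deterministic lower bound on $r_B$ in terms of $N(B)$. Writing $x,y$ for unit representatives, $r_B=\|Bx\wedge By\|/(\|Bx\|\,\|By\|\,\|x\wedge y\|)$; since $\|Bx\wedge By\|=\|(\wedge^2B)(x\wedge y)\|\ge \sigma_{min}(\wedge^2B)\|x\wedge y\|$, the singular values of $\wedge^2B$ are products of pairs of distinct singular values of $B$, and $\|Bx\|,\|By\|\le\|B\|$, one gets $r_B\ge \sigma_{min}(B)^2/\|B\|^2\ge N(B)^{-4}$, uniformly over $\bar x,\bar y$. Therefore $c/r_B\le N(h_{j,n_0})^{4}\,\tilde c(g_{j,n_0},h_{j,n_0})$.

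In the setting of the first part, using a uniform a.s.\ bound $N(h_j)\le K$ for the reference sequence, $N(h_{j,n_0})^{4}\le K^{4n_0}$, so $\bbE[c/r_B]\le K^{4n_0}\theta$; choosing $\varepsilon_0=\delta/(2K^{4n_0})$ yields $\bbE[\ln r_A]\le -\delta/2$ for all $j$, i.e.\ Assumption~\ref{Ass1} for $(g_j)$ with $\delta_1=\delta/2$ and the same $n_0$. In the setting of the second part $N(h_j)$ is only in $L^{8n_0}$, so instead I apply Cauchy--Schwarz: $\bbE[c/r_B]\le \|N(h_{j,n_0})\|_{L^8}^{4}\,\|\tilde c(g_{j,n_0},h_{j,n_0})\|_{L^2}$, and since $N(h_{j,n_0})\le\prod_{i=j}^{j+n_0-1}N(h_i)$, generalized H\"older bounds $\|N(h_{j,n_0})\|_{L^8}\le\prod_i\|N(h_i)\|_{L^{8n_0}}\le C_1^{n_0}$. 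It then remains to show $\sup_j\|\tilde c(g_{j,n_0},h_{j,n_0})\|_{L^2}=O(\varepsilon)$. The only small factor of $\tilde c$ is $\|A-B\|=\|g_{j,n_0}-h_{j,n_0}\|$, which by the telescoping identity $g_{j,n_0}-h_{j,n_0}=\sum_{m=j}^{j+n_0-1}(g_{j+n_0-1}\cdots g_{m+1})(g_m-h_m)(h_{m-1}\cdots h_j)$ is a sum of $n_0$ terms, each a product of at most $n_0-1$ factors $N(g_i)$ or $N(h_i)$ and one factor $\|g_m-h_m\|$; the remaining factors of $\tilde c$ form a product of a bounded (order $n_0$) number of such norm factors. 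Placing the single $\|g_m-h_m\|$ in $L^{8n_0}$ (where it is $\le\varepsilon$) and distributing the remaining H\"older exponents over the $O(n_0)$ norm factors (each in $L^{8n_0}$, with bound $C_1$ or $C_2$) gives $\sup_j\|\tilde c(g_{j,n_0},h_{j,n_0})\|_{L^2}\le C_3\varepsilon$ with $C_3=C_3(n_0,C_1,C_2)$, so $\bbE[c/r_B]\le C_1^{n_0}C_3\varepsilon$ and $\varepsilon_0=\delta/(2C_1^{n_0}C_3)$ finishes the argument with $\delta_1=\delta/2$.

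\textbf{Main obstacle.} The two analytic inputs (the linearization $\ln(1+u)\le u$ and the angle bound $r_B\ge N(B)^{-4}$) are elementary; the point requiring care is the bookkeeping in the last step, namely verifying that the fixed but $n_0$-dependent number of operator-norm factors appearing in $\tilde c(g_{j,n_0},h_{j,n_0})$, together with the telescoped difference, can be jointly accommodated in $L^2$ using only $L^{8n_0}$ moments. Counting the degree of $\tilde c$ in the operator norms (at most $7$, before forming $n_0$-fold products) and choosing the H\"older exponents so that $\|g_m-h_m\|$ sits in $L^{8n_0}$ and the remaining factors in $L^{q}$ with $q\le 8n_0$ is precisely what dictates the exponent $8n_0$ in the hypotheses.
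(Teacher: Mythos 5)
Your scheme is attractively clean, but it over-spends moments and does not close with the stated hypotheses. The critical step is replacing $\bbE[c/r_B]$ by $\bbE\bigl[N(h_{j,n_0})^4\,\tilde c(g_{j,n_0},h_{j,n_0})\bigr]$ via the deterministic bound $r_B\ge N(h_{j,n_0})^{-4}$. That bound is correct, but it injects an extra degree-$4n_0$ factor in the individual matrix norms. The quantity $\tilde c(g_{j,n_0},h_{j,n_0})$ already has total degree up to $8n_0$ in the individual $\|g_i\|,\|g_i^{-1}\|,\|h_i\|,\|h_i^{-1}\|,\|g_i-h_i\|$ (the worst monomial in $\tilde c(A,B)$ is $\|B\|\cdot\|A-B\|\cdot\|B\|^2\|A^{-1}\|^2\|B^{-1}\|^2$, degree $8$, and each norm of an $n_0$-fold product costs $n_0$ individual factors; the telescoped difference costs $n_0-1$ norms plus one difference), which is why the corollary's $L^{8n_0}$ hypothesis is exactly the right order for $\bbE[\tilde c]$. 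Multiplying by $N(h_{j,n_0})^4$ pushes the degree to $12n_0$, so any H\"older split (your Cauchy--Schwarz gives $\|N(h_{j,n_0})\|_{L^8}^4\|\tilde c\|_{L^2}$, needing $L^{16n_0}$ for the $\tilde c$ factor; optimizing exponents gives $L^{12n_0}$) requires strictly more than the $L^{8n_0}$ moments you are given. So the second part of your argument does not follow from the stated hypotheses. Likewise, in the first part you invoke an a.s.\ bound $N(h_j)\le K$; that is not part of the hypothesis there, and without it $\bbE[\tilde c]$ small does not control $\bbE[N(h_{j,n_0})^4\tilde c]$.

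The paper's proof avoids this inflation precisely by not dividing by $r_B$ everywhere. It fixes small $a>0$ and $\gamma>0$ with $a<\gamma$, sets $\Gamma=\{\tilde c\le a\}$ and $\Delta=\{r_B\le 1-\gamma\}$, and on $\Gamma\cap\Delta$ uses the \emph{additive} bound $r_A\le\tilde c+r_B\le 1-(\gamma-a)$ directly, so $\ln r_A\le\ln(1-(\gamma-a))<0$ with no $1/r_B$ at all. The quotient $\tilde c/r_B$ is only used on $\Delta^c=\{r_B>1-\gamma\}$, where $1/r_B\le(1-\gamma)^{-1}$ is a deterministic constant independent of the matrices, and on $\Gamma^c$ it uses the crude a priori bound $\ln r_A\le 2\sum_k\ln\|g_k\|$ together with Markov's inequality $\bbP(\Gamma^c)\le\bbE[\tilde c]/a$. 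In this way the only quantity that needs to be small is $\bbE[\tilde c(g_{j,n_0},h_{j,n_0})]$ itself, which is exactly what the $L^{8n_0}$ moment bound (for the second part) or the hypothesis $\theta<\varepsilon_0$ (for the first part) controls. To repair your argument you would either need to strengthen the hypotheses to $L^{12n_0}$ moments (plus an a.s.\ bound on $N(h_j)$ in part one), or replace the global inequality $\ln(1+c/r_B)\le c/r_B$ by a case split of the type the paper uses so that $1/r_B$ only appears where it is uniformly bounded.
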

\begin{proof}
Fix some $j$ and two distinct directions $\bar x$ and $\bar y$. Set
$$
F=\tilde c(g_{j,n_0},h_{j,n_0}),\, G=\frac{d(g_{j,n_0}\bar x,g_{j,n_0}\bar y)}{d(\bar x,\bar y)},\,  H=\frac{d(h_{j,n_0}\bar x,h_{j,n_0}\bar y)}{d(\bar x,\bar y)}.
$$
Then by the previous lemma
$$
G\leq F+H.
$$
Fix some $a>0$ and $\gamma>0$ such that $1-(\gamma-a)<1$ and let $\Gamma=\{F\leq a\}$ and $\Delta=\{H\leq 1-\gamma\}$. Then 
$$
\ln(G)\bbI_{\Gamma}\bbI_{\Delta}\leq \ln(1-(\gamma-a)).
$$
On the other hand, we have 
$$
G\leq\|g_{j,n_0}\|^2\leq  \prod_{k=j}^{j+n_0-1}\|g_k\|^2.
$$
Therefore, 
$$
\ln(G)\bbI_{\Gamma^c}\leq 2\sum_{k=j}^{j+n_0-1}\ln(\|g_k\|)\bbI_{\Gamma^c}.
$$
Notice now that by the Markov inequality,
$$
\bbP(\Gamma^c)=\bbP(F>a)\leq \bbE[F]/a
$$
and so by the Cauchy-Schwarz inequality,
$$
\bbE[\ln(G)\bbI_{\Gamma^c}]\leq 2n_0\sup_j\|\ln(\|g_j\|)\|_{L^2}a^{-1/2}(\bbE[F])^{1/2}.
$$
Next, notice that $\ln(G)\leq \ln(F+H)\leq \ln (H)+\frac{F}{H}$. Thus,
$$
\bbE[\ln(G)\bbI_{\Delta^c}]\leq \bbE[\ln(H)]+(1-\gamma)^{-1}\bbE[F].
$$
Now, notice that as $\varepsilon\to 0$ we have that $\bbE[F]\to 0$. Thus the result follows for $\varepsilon$ small enough (which allows us to take $a$ small but not too small to ensure that $\bbE[F]/a$ is small).
\end{proof}
\begin{remark}\label{RemIID}
In applications when $h_j$ is an strongly irreducible and proximal iid sequence then (see \cite{Guiv2015}) we get that for some $\al\in(0,1]$, 
$$
\rho(n_0)=\sup_{\bar x\not=\bar y}\bbE\left[\frac{d^\alpha(h_{j,n_0}\bar x,h_{j,n_0}\bar y)}{d^\alpha(\bar x,\bar y)}\right]<1
$$
where we note that the above expectation does not depend on $j=0$ due to stationarity.
Using the Jensen inequality with the logarithmic function this is much stronger than Assumption \ref{Ass1}.  
In that case we see directly that 
$$
\bbE\left[\frac{d^\alpha(g_{j,n_0}\bar x,g_{j,n_0}\bar y)}{d^\alpha(\bar x,\bar y)}\right]\leq \bbE\left[\tilde c^\alpha(g_{j,n_0},h_{j,n_0})\right]+\bbE\left[\frac{d^\alpha(h_{j,n_0}\bar x,h_{j,n_0}\bar y)}{d^\alpha(\bar x,\bar y)}\right]. 
$$
Assume we can couple $(h_j)$ and $(g_j)$ such that  $\te:=\sup_j\bbE[\tilde c^\alpha(g_{j,n_0},h_{j,n_0})]<\infty$. Then there exists a constant $\varepsilon_0=\varepsilon_0(\delta,n_0)$ such that  Assumption \ref{Ass1} holds for the sequence $(g_j)$ with some $\delta_1>0$ instead of $\delta$ and the same $n_0$ if $\te<\varepsilon_0$.

In particular, suppose that
 $C_1=\sup_j\|1+N(h_j)\|_{L^{8\al n_0}}<\infty$ and $C_2=\sup_j\|1+N(g_j)\|_{L^{8\al n_0}}<\infty$. Assume also
that we can couple $(g_j)$ and $(h_j)$ such that $\sup_j\|h_j-g_j\|_{L^{8\al n_0}}\leq \varepsilon$. Then there exists a constant $\varepsilon_0=\varepsilon_0(\delta,n_0,C_1,C_2)$ such that if $\varepsilon<\varepsilon_0$ then
$$
\bbE\left[\frac{d^\alpha(g_{j,n_0}\bar x,g_{j,n_0}\bar y)}{d^\alpha(\bar x,\bar y)}\right]<1
$$
which again is much stronger than Assumption \ref{Ass1}.
\end{remark}

\begin{remark}[Linear growth of the variance]
 Let $(h_j)$ be any independent sequence of random invertible matrices such that 
$$
\liminf_{n\to\infty}\frac{1}{n}\text{Var}(\|h_n\cdots h_1 x_0\|)>0.
$$   
Then in \cite[Section 5.3.2]{NonUBE} we showed that if $N(h_j)$ is uniformly bounded and $\sup_j\|\mu_j-\nu_j\|_{TV}$ is small enough, where $\mu_j$ is the law of $g_j$ and $\nu_j$ is the law of $h_j$, then 
$$
\liminf_{n\to\infty}\frac{1}{n}\text{Var}(\|g_n\cdots g_1 x_0\|)>0.
$$  
Thus, the linear growth assumption in Theorem \ref{MDP} is satisfied. 
\end{remark}

\subsection{Small perturbations of inhomogeneous random walks on $GL(d,\bbR)$}\label{Pert3}
Let $G$ be a random matrix with values in $\text{GL}(d,\bbR)$. Let us consider a random SVD for $G$,
$$
G=U\text{Diag}(\sigma_1,\sigma_2,...,\sigma_d)V
$$
and let $u_i$ be the random columns of $U$. Notice that for two unit vectors $x,y$ we have
$$
\frac{\|Gx\wedge Gy\|^2}{\|x\wedge y\|^2\|Gx\|^2\|Gy\|^2}=\frac{\sum_{1\leq i<j\leq d}\sigma_i^2\sigma_j^2 A_{i,j,U}(x,y)}{\sum_{1\leq i,j\leq d}\sigma_j^2\sigma_i^2a_{i,j,U}(x,y)}
$$
where 
$$
A_{i,j,U}(x,y)=\frac{\left(<u_i,x><u_j,y>-<u_i,y><u_j,x>\right)^2}{\sum_{1\leq k<\ell\leq d}\left(<u_k,x><u_\ell,y>-<u_k,y><u_\ell,x>\right)^2}
$$
and 
$$
a_{i,j,U}(x,y)=\left(<u_i,x>\right)^2\left(<u_j,y>\right)^2.
$$
Note that both $(A_{i,j,U}(x,y))_{i<j}$ and $(a_{i,j,U}(x,y))_{i,j}$ are probability vectors. Therefore,
$$
\frac{\|Gx\wedge Gy\|^2}{\|x\wedge y\|^2\|Gx\|^2\|Gy\|^2}\leq \frac{\sigma_1^2\sigma_2^2 }{\sigma_1^4a_{1,1,U}(x,y)}.
$$

By combining the above  we thus get the following results.
\begin{proposition}\label{Prrrp}
Suppose that there is a constant $\delta>0$ such that for every unit vector $x$ we have
\begin{equation}\label{Condd}
2\bbE\left[\left|\ln(|<u_1,x>|)\right|\right]\leq \bbE[\ln(\sig_1/\sig_2)]-\delta.    
\end{equation}
Then 
$$
\sup_{x\not=y}\bbE\left[\ln\left(\frac{\|Gx\wedge Gy\|}{\|x\wedge y\|\|Gx\|\|Gy\|}\right)\right]\leq -\delta.
$$
where $x,y$ above are unit vectors.
\end{proposition}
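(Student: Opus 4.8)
The plan is to build directly on the pointwise estimate established immediately above the statement, namely
$$
\frac{\|Gx\wedge Gy\|^2}{\|x\wedge y\|^2\|Gx\|^2\|Gy\|^2}\leq \frac{\sigma_1^2\sigma_2^2 }{\sigma_1^4a_{1,1,U}(x,y)}=\frac{\sigma_2^2}{\sigma_1^2\,|<u_1,x>|^2\,|<u_1,y>|^2},
$$
valid almost surely for any fixed pair of unit vectors $x\neq y$. First I would take square roots and then logarithms. Since $u_1$ is a unit vector, $|<u_1,x>|\leq 1$ and $|<u_1,y>|\leq 1$, so $-\ln|<u_1,x>|=|\ln|<u_1,x>||$ and likewise for $y$. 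This gives the almost sure pointwise bound
$$
\ln\left(\frac{\|Gx\wedge Gy\|}{\|x\wedge y\|\|Gx\|\|Gy\|}\right)\leq \ln(\sigma_2/\sigma_1)+|\ln|<u_1,x>||+|\ln|<u_1,y>||.
$$

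Next I would take expectations. Because $\sigma_1\geq\sigma_2$ the term $\ln(\sigma_2/\sigma_1)$ is nonpositive, so the positive part of the right-hand side is dominated by $|\ln|<u_1,x>||+|\ln|<u_1,y>||$; this is integrable by hypothesis \eqref{Condd} (its expectation is at most $\bbE[\ln(\sigma_1/\sigma_2)]$, which is finite — in the applications of this proposition this is guaranteed by \eqref{N g}, which also forces $|<u_1,x>|>0$ a.s.). Hence the expectation of the left-hand side is well defined in $[-\infty,\infty)$ and
$$
\bbE\left[\ln\left(\frac{\|Gx\wedge Gy\|}{\|x\wedge y\|\|Gx\|\|Gy\|}\right)\right]\leq -\bbE[\ln(\sigma_1/\sigma_2)]+\bbE[|\ln|<u_1,x>||]+\bbE[|\ln|<u_1,y>||].
$$
Now I apply \eqref{Condd} twice, once to the unit vector $x$ and once to the unit vector $y$, and average the two resulting inequalities to get $\bbE[|\ln|<u_1,x>||]+\bbE[|\ln|<u_1,y>||]\leq \bbE[\ln(\sigma_1/\sigma_2)]-\delta$. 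Substituting this into the previous display yields the bound $-\delta$, and since it is uniform over all pairs $x\neq y$, taking the supremum completes the argument.

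I do not expect a serious obstacle here: the content is the already-proven comparison with $\sigma_2^2/(\sigma_1^2|<u_1,x>|^2|<u_1,y>|^2)$, and the rest is monotonicity of $\ln$ plus linearity of expectation. The only point requiring a little care is the measurability of the random singular value decomposition used to define $u_1$: when $\sigma_1>\sigma_2$ the first left singular direction is determined up to a sign (irrelevant, since only $|<u_1,\cdot>|$ appears), while on the event $\{\sigma_1=\sigma_2\}$ one fixes any measurable selection of the orthogonal factor $U$; none of this affects the inequalities above, which hold for any choice of SVD.
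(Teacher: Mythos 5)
Your argument is correct and is essentially the paper's own proof: the paper simply "combines the above," i.e.\ takes the square root and logarithm of the displayed pointwise bound $\frac{\|Gx\wedge Gy\|^2}{\|x\wedge y\|^2\|Gx\|^2\|Gy\|^2}\leq \frac{\sigma_2^2}{\sigma_1^2 a_{1,1,U}(x,y)}$, takes expectations, and applies \eqref{Condd} to both $x$ and $y$ exactly as you do. Your added remarks on integrability of the upper bound and measurability of the SVD are harmless refinements that do not change the route.
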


\begin{corollary}
Let $h_j$ be a sequence of independent random matrices in $\text{GL}(d,\bbR)$ satisfying the conditions of Proposition \ref{Prrrp} with the same constant $\delta$. Let $g_j$ be another sequence of  independent random matrices  in $\text{GL}(d,\bbR)$ satisfying \eqref{N g}. Suppose that we can couple both sequence such that either $h_j$ satisfies \eqref{N g} and $\sup_j\|g_j-h_j\|_{L^1}$ is small enough or $\sup_j\|N(h_j)\|_{L^8}<\infty$ and $\sup_j\|g_j-h_j\|_{L^8}$ is small enough. Then all the results in Section \ref{Sec2} hold for the sequence $g_{n}\cdots g_1$.
\end{corollary}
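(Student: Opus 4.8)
The plan is to deduce the corollary by chaining Proposition \ref{Prrrp}, Corollary \ref{Corr}, and the abstract results of Section \ref{Sec2}. The point is that Proposition \ref{Prrrp} is exactly a one-step ($n_0=1$) instance of Assumption \ref{Ass1} once one unwinds the definition of the projective metric, so the only work is to transfer the contraction to the perturbed sequence and then quote Section \ref{Sec2}.

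First I would record the elementary identity that for any invertible $G$ and unit vectors $x,y$ with directions $\bar x,\bar y$,
$$
\frac{d(G\bar x,G\bar y)}{d(\bar x,\bar y)}=\frac{\|Gx\wedge Gy\|}{\|x\wedge y\|\,\|Gx\|\,\|Gy\|},
$$
since the unit vectors in the directions $G\bar x,G\bar y$ are $Gx/\|Gx\|$ and $Gy/\|Gy\|$. Applying Proposition \ref{Prrrp} to each $h_j$ --- each of which satisfies \eqref{Condd} with the common $\delta$ --- therefore gives
$$
\sup_j\sup_{\bar x\neq\bar y}\bbE\left[\ln\left(\frac{d(h_j\bar x,h_j\bar y)}{d(\bar x,\bar y)}\right)\right]\le-\delta,
$$
i.e.\ $(h_j)$ satisfies Assumption \ref{Ass1} with $n_0=1$.

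Next I would transport Assumption \ref{Ass1} from $(h_j)$ to $(g_j)$ via Corollary \ref{Corr} with $n_0=1$, so that $g_{j,n_0}=g_j$, $h_{j,n_0}=h_j$, and only $L^{8}$ conditions are involved. In the first regime both $N(h_j)$ and $N(g_j)$ are bounded a.s.\ by a constant $K$ (by \eqref{N g} for each sequence), so the explicit formula for $\tilde c$ yields $\tilde c(g_j,h_j)\le c(K)\|g_j-h_j\|$ a.s.\ for some $c(K)<\infty$, hence $\theta:=\sup_j\bbE[\tilde c(g_j,h_j)]\le c(K)\sup_j\|g_j-h_j\|_{L^1}$, which is $<\varepsilon_0(\delta,1)$ when the $L^1$ coupling distance is small enough; the first part of Corollary \ref{Corr} then gives Assumption \ref{Ass1} for $(g_j)$ with some $\delta_1>0$ and $n_0=1$. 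In the second regime, $\sup_j\|1+N(h_j)\|_{L^8}<\infty$ by hypothesis and $\sup_j\|1+N(g_j)\|_{L^8}<\infty$ by \eqref{N g}, so the ``in particular'' clause of Corollary \ref{Corr} (with $8n_0=8$) applies once $\sup_j\|g_j-h_j\|_{L^8}$ is small enough, again producing Assumption \ref{Ass1} for $(g_j)$.

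Finally, $(g_j)$ satisfies \eqref{N g} by assumption and now also Assumption \ref{Ass1}, so Proposition \ref{Approx Prop} and Theorems \ref{Var them}, \ref{BE}, \ref{ThWass}, \ref{ASIP}, \ref{ConcentrationIneq} and \ref{MDP} all apply to $S_n=\ln\|g_n\cdots g_1x_0\|$, which is the assertion. I expect no real obstacle: the only non-routine step is the estimate that $\bbE[\tilde c(g_j,h_j)]\to 0$ as the coupling distance tends to zero under each of the two integrability assumptions, and this is the elementary bound above in the $L^\infty$ case and precisely the Hölder computation already carried out in the proof of Corollary \ref{Corr} in the $L^8$ case.
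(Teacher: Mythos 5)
Your proposal is correct and follows essentially the same route the paper intends: Proposition \ref{Prrrp} gives Assumption \ref{Ass1} with $n_0=1$ for $(h_j)$ (via the identity $d(G\bar x,G\bar y)/d(\bar x,\bar y)=\|Gx\wedge Gy\|/(\|x\wedge y\|\|Gx\|\|Gy\|)$), Corollary \ref{Corr} transfers it to $(g_j)$ in each of the two coupling regimes, and then the abstract results of Section \ref{Sec2} apply. This matches the paper's (implicit) argument, which mirrors its two-line proof of the analogous corollary in Section \ref{Pert2}, so no further commentary is needed.
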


To verify \eqref{Condd} we need the following result.
\begin{lemma}
Let $X$ be a random variable with values in $[0,1]$ and suppose that there are $C,\alpha>0$ such that for all $\epsilon\in(0,1)$, 
$$
\bbP(X\leq \epsilon)\leq C|\ln(\epsilon)|^{-1-\alpha}.
$$
Then with $R(C,\alpha)=1+C/\alpha$ we have 
$$
\bbE[|\ln(X)|]\leq R(C,\alpha).
$$
\end{lemma}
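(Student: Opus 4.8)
The plan is to split the expectation $\bbE[|\ln X|]$ according to the dyadic scale of $X$, using the tail bound $\bbP(X\le\epsilon)\le C|\ln\epsilon|^{-1-\alpha}$ to control each piece. Since $X\in[0,1]$ we have $|\ln X|=-\ln X\ge 0$, so by the layer-cake formula
$$
\bbE[|\ln X|]=\int_0^\infty \bbP(-\ln X\ge t)\,dt=\int_0^\infty \bbP(X\le e^{-t})\,dt.
$$
For $t\le 1$ I would just bound the integrand by $1$, contributing at most $1$ (this is where the constant $1$ in $R(C,\alpha)$ comes from, and it also handles the fact that the tail bound is only useful for $\epsilon$ bounded away from $1$). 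For $t\ge 1$ we have $e^{-t}\in(0,1)$ and $|\ln(e^{-t})|=t$, so the hypothesis gives $\bbP(X\le e^{-t})\le C t^{-1-\alpha}$.

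The remaining step is the elementary integral estimate
$$
\int_1^\infty C t^{-1-\alpha}\,dt=\frac{C}{\alpha},
$$
which converges precisely because $\alpha>0$. Adding the two contributions yields $\bbE[|\ln X|]\le 1+C/\alpha=R(C,\alpha)$, as claimed. One should also note the trivial measurability/integrability point that $-\ln X$ is a nonnegative random variable (possibly taking the value $+\infty$ on $\{X=0\}$, but the bound $\bbP(X\le\epsilon)\le C|\ln\epsilon|^{-1-\alpha}\to 0$ as $\epsilon\to 0$ forces $\bbP(X=0)=0$), so the layer-cake identity is valid and the final bound shows in particular that $-\ln X\in L^1$.

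I do not expect any serious obstacle here: the only mild subtlety is making sure the tail hypothesis is applied only in the range $t\ge 1$ where $|\ln(e^{-t})|=t\ge 1$ keeps $|\ln(\epsilon)|^{-1-\alpha}$ well-defined and bounded, and absorbing the small-$t$ part into the additive constant $1$. If one prefers a more hands-on argument one can instead write $\bbE[|\ln X|]=\sum_{k\ge 0}\bbE[|\ln X|\,\mathbf 1_{\{e^{-(k+1)}\le X< e^{-k}\}}]\le \sum_{k\ge 0}(k+1)\bbP(X< e^{-k})$ and bound $\bbP(X<e^{-k})\le C(k\vee 1)^{-1-\alpha}$ (with the $k=0$ term bounded by $1$), then compare the resulting series $\sum_{k\ge 1}(k+1)k^{-1-\alpha}$ with the integral above; this is entirely routine.
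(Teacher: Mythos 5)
Your proposal is correct and follows essentially the same route as the paper: the layer-cake identity $\bbE[|\ln X|]=\int_0^\infty\bbP(X\le e^{-t})\,dt$, bounding the contribution of $t\in[0,1]$ by $1$ and using the hypothesis $\bbP(X\le e^{-t})\le Ct^{-1-\alpha}$ for $t\ge 1$ to get the remaining $C/\alpha$. The extra remarks on $\bbP(X=0)=0$ and the dyadic alternative are fine but not needed; no changes required.
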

\begin{proof}
 $$
\bbE[|\ln(X)|]=\int_{0}^\infty\bbP(|\ln(X)|\geq t)dt=\int_{0}^{\infty}\bbP(\ln(X)\leq -t)dt
$$
$$
=\int_{0}^\infty\bbP(X\leq e^{-t})dt\leq 1+\int_{1}^\infty\bbP(X\leq e^{-t})dt\leq R(C,\alpha).
$$   
\end{proof}
By applying the lemma with $X=|<u,x>|$ we get the following result.
\begin{corollary}\label{SupCor}
Suppose that there are constants $C,\alpha>0$ such that 
$$
\sup_{\|x\|=1}\bbP(|<u_1,x>|\leq \epsilon)\leq C(|\ln(\epsilon)|)^{-1-\alpha}
$$
for every $\epsilon\in(0,1)$. Assume also that there is a constant $\delta>0$ such that 
$$
\bbE[\ln(\sig_1/\sig_2)]\geq R(C,\alpha)+\delta=1+C/\alpha+\delta.
$$
Then 
$$
\sup_{x\not=y}\bbE\left[\ln\left(\frac{\|Gx\wedge Gy\|}{\|x\wedge y\|\|Gx\|\|Gy\|}\right)\right]\leq -\delta.
$$
\end{corollary}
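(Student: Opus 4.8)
The plan is to combine Corollary~\ref{SupCor}'s hypothesis with the previous Lemma and Proposition~\ref{Prrrp}. The logical skeleton is short: the Lemma on random variables in $[0,1]$ is applied pointwise in $x$, and then Proposition~\ref{Prrrp} converts the resulting inequality into the desired projective contraction bound. So the only real work is to chain the inequalities correctly and to verify that the supremum over $x$ survives the application of the Lemma.

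First I would fix a unit vector $x$ and set $X=|\langle u_1,x\rangle|$, noting that $X$ takes values in $[0,1]$ since $u_1$ is a unit vector and $x$ is a unit vector. By hypothesis, $\bbP(X\leq\epsilon)=\bbP(|\langle u_1,x\rangle|\leq\epsilon)\leq C|\ln(\epsilon)|^{-1-\alpha}$ for all $\epsilon\in(0,1)$, and this bound holds with constants $C,\alpha$ that do not depend on $x$. Applying the preceding Lemma with these $C,\alpha$ gives $\bbE[|\ln(X)|]\leq R(C,\alpha)=1+C/\alpha$, again with a bound uniform in $x$. Hence
$$
\sup_{\|x\|=1}2\bbE\left[\left|\ln(|\langle u_1,x\rangle|)\right|\right]\leq 2R(C,\alpha).
$$
Wait — I should be careful: Proposition~\ref{Prrrp} requires $2\bbE[|\ln|\langle u_1,x\rangle||]\leq\bbE[\ln(\sig_1/\sig_2)]-\delta$, i.e. the gap assumption must beat \emph{twice} the log-overlap. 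So actually I want $\bbE[\ln(\sig_1/\sig_2)]\geq 2R(C,\alpha)+\delta$, not $R(C,\alpha)+\delta$. This discrepancy (the factor of $2$) is the one genuinely delicate point in writing the proof: either the statement of Corollary~\ref{SupCor} should read $\bbE[\ln(\sig_1/\sig_2)]\geq 2R(C,\alpha)+\delta=2+2C/\alpha+\delta$, or Proposition~\ref{Prrrp}'s condition~\eqref{Condd} should be read with a different normalization. I would flag this and, assuming the intended hypothesis is the one that makes the chain work, proceed with whichever constant is consistent.

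With the (corrected) gap hypothesis in hand, the second step is immediate: for every unit vector $x$,
$$
2\bbE\left[\left|\ln(|\langle u_1,x\rangle|)\right|\right]\leq 2R(C,\alpha)\leq \bbE[\ln(\sig_1/\sig_2)]-\delta,
$$
so condition~\eqref{Condd} of Proposition~\ref{Prrrp} is verified with this $\delta$. Applying Proposition~\ref{Prrrp} then yields exactly
$$
\sup_{x\neq y}\bbE\left[\ln\left(\frac{\|Gx\wedge Gy\|}{\|x\wedge y\|\,\|Gx\|\,\|Gy\|}\right)\right]\leq -\delta,
$$
which is the assertion of the corollary. The only nontrivial content beyond bookkeeping is confirming that $a_{1,1,U}(x,y)=(\langle u_1,x\rangle)^2$ is indeed the quantity controlled by the Lemma — this is visible from the displayed definition of $a_{i,j,U}$ preceding Proposition~\ref{Prrrp}, since in the upper bound $\sigma_1^2\sigma_2^2/(\sigma_1^4 a_{1,1,U}(x,y))$ the relevant overlap is with $x$ alone (by the symmetry $a_{1,1,U}(x,y)=a_{1,1,U}(y,x)$ after relabeling, one takes the worse of the two, hence the supremum over unit vectors of $|\langle u_1,\cdot\rangle|^{-1}$). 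I expect the main obstacle to be purely expository: getting the constant in the gap condition ($R$ versus $2R$) to match between Proposition~\ref{Prrrp}, the Lemma, and the corollary, and making sure the uniformity in $x$ is transported through each step without a hidden dependence creeping in.
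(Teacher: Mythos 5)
Your proposal follows exactly the paper's own (one-line) proof: apply the preceding Lemma with $X=|\langle u_1,x\rangle|$ to get $\sup_{\|x\|=1}\bbE[|\ln|\langle u_1,x\rangle||]\leq R(C,\alpha)$, then feed this into the hypothesis \eqref{Condd} of Proposition \ref{Prrrp}. So there is no difference in approach.

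You are also right to flag the factor-of-$2$ discrepancy: it is a genuine error in the paper's statement, not a misreading on your part. Tracing back to the display just before Proposition \ref{Prrrp}, one has $a_{1,1,U}(x,y)=\langle u_1,x\rangle^2\langle u_1,y\rangle^2$ (note: the product of \emph{two} squared overlaps, one with $x$ and one with $y$, not just $\langle u_1,x\rangle^2$ as you wrote at one point), and taking logarithms of
$$
\frac{\|Gx\wedge Gy\|^2}{\|x\wedge y\|^2\|Gx\|^2\|Gy\|^2}\leq \frac{\sigma_2^2}{\sigma_1^2\,a_{1,1,U}(x,y)}
$$
gives after halving
$$
\bbE\!\left[\ln\!\left(\frac{\|Gx\wedge Gy\|}{\|x\wedge y\|\|Gx\|\|Gy\|}\right)\right]\leq -\bbE[\ln(\sigma_1/\sigma_2)]+\bbE\!\left[|\ln|\langle u_1,x\rangle||\right]+\bbE\!\left[|\ln|\langle u_1,y\rangle||\right].
$$
This is exactly why \eqref{Condd} carries the factor $2$: the single-vector bound must be applied to both $x$ and $y$. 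With the Lemma giving only $\bbE[|\ln|\langle u_1,\cdot\rangle||]\leq R(C,\alpha)$ uniformly, the right-hand side is at most $-\bbE[\ln(\sigma_1/\sigma_2)]+2R(C,\alpha)$, so the correct hypothesis in Corollary \ref{SupCor} should be
$$
\bbE[\ln(\sigma_1/\sigma_2)]\geq 2R(C,\alpha)+\delta=2+2C/\alpha+\delta,
$$
not $R(C,\alpha)+\delta$. Aside from your brief mis-statement of $a_{1,1,U}$ (which, ironically, is precisely the source of the missing factor of $2$), your reasoning is correct and complete.
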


\begin{corollary}
Let $h_j$ be a sequence of independent random matrices in $\text{GL}(d,\bbR)$ satisfying the conditions of Corollary \ref{SupCor} with the same constants $c,\alpha,\delta$. Let $g_j$ be another sequence of  independent random matrices  in $\text{GL}(d,\bbR)$ satisfying \eqref{N g}. Suppose that we can couple both sequences such that either $h_j$ satisfies \eqref{N g} and $\sup_j\|g_j-h_j\|_{L^1}$ is small enough or $\sup_j\|N(h_j)\|_{L^8}<\infty$ and $\sup_j\|g_j-h_j\|_{L^8}$ is small enough. Then all the results in Section \ref{Sec2} hold for the sequence $g_{n}\cdots g_1$.
\end{corollary}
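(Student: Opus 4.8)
The plan is to obtain this corollary by composing two results already established: Corollary~\ref{SupCor}, which verifies the hypothesis of Proposition~\ref{Prrrp} for the reference sequence $(h_j)$, and the perturbative Corollary~\ref{Corr}, which transfers the logarithmic projective contraction to the nearby sequence $(g_j)$. Once Assumption~\ref{Ass1} is in place for $(g_j)$, and since $(g_j)$ already satisfies \eqref{N g} by hypothesis, Proposition~\ref{Approx Prop} applies and every conclusion of Section~\ref{Sec2} follows from the abstract theory (as in \cite{NewBE}).

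First I would record the (essentially notational) identity that for unit vectors $x,y$ with distinct directions $\bar x,\bar y$,
$$
\frac{\|Gx\wedge Gy\|}{\|x\wedge y\|\,\|Gx\|\,\|Gy\|}=\frac{d(G\bar x,G\bar y)}{d(\bar x,\bar y)},
$$
since $d(\bar x,\bar y)=\|x\wedge y\|$ and $d(G\bar x,G\bar y)=\|Gx\wedge Gy\|/(\|Gx\|\,\|Gy\|)$. Applying Corollary~\ref{SupCor} to $G=h_j$ for every $j$ — its hypotheses being exactly the $|\ln(\epsilon)|^{-1-\al}$ regularity of $|\langle u_1,\cdot\rangle|$ together with the gap $\bbE[\ln(\sig_1/\sig_2)]\ge 1+C/\al+\del$, all uniform in $j$ — then yields
$$
\sup_j\,\sup_{\bar x\ne\bar y}\bbE\!\left[\ln\frac{d(h_j\bar x,h_j\bar y)}{d(\bar x,\bar y)}\right]\le-\del,
$$
which is precisely Assumption~\ref{Ass1} for the sequence $(h_j)$ with $n_0=1$.

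Next I would invoke Corollary~\ref{Corr} for the pair $(g_j),(h_j)$ with $n_0=1$, so that $g_{j,n_0}=g_j$ and $h_{j,n_0}=h_j$, and verify that each of the two coupling hypotheses of the present statement triggers one of the hypotheses of Corollary~\ref{Corr}. If $h_j$ also satisfies \eqref{N g}, then, combined with \eqref{N g} for $(g_j)$, the norms $\|h_j\|,\|h_j^{-1}\|,\|g_j\|,\|g_j^{-1}\|$ are bounded a.s.\ by a common constant, so the explicit formula for $\tilde c$ gives $\tilde c(g_j,h_j)\le C\|g_j-h_j\|$ and hence $\te:=\sup_j\bbE[\tilde c(g_j,h_j)]\le C\sup_j\|g_j-h_j\|_{L^1}$; this feeds the abstract clause of Corollary~\ref{Corr}. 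If instead only $\sup_j\|N(h_j)\|_{L^8}<\infty$, then, using $\|g_j\|\le N(g_j)\le C$ and $\|g_j^{-1}\|\le N(g_j)\le C$ a.s.\ from \eqref{N g}, the formula for $\tilde c$ gives $\tilde c(g_j,h_j)\le C\,N(h_j)^{5}\|g_j-h_j\|$, and Hölder's inequality on the probability space (in the form $\bbE[|UV|]\le\|U\|_{L^p}\|V\|_{L^q}$ when $p^{-1}+q^{-1}\le 1$) with $p=8/5$, $q=8$ gives $\bbE[\tilde c(g_j,h_j)]\le C\,\|N(h_j)\|_{L^8}^{5}\,\|g_j-h_j\|_{L^8}$ — which is exactly the ``in particular'' clause of Corollary~\ref{Corr} with $n_0=1$, the hypothesis $\sup_j\|1+N(g_j)\|_{L^8}<\infty$ being automatic from \eqref{N g}. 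In both cases the relevant quantity is made smaller than the threshold $\varepsilon_0$ of Corollary~\ref{Corr} by shrinking the perturbation, so $(g_j)$ satisfies Assumption~\ref{Ass1} with some $\del_1>0$ and the same $n_0=1$. Since $(g_j)$ also satisfies \eqref{N g}, Theorems~\ref{Var them}, \ref{BE}, \ref{ThWass}, \ref{ASIP} and \ref{ConcentrationIneq} then all apply to $S_n=\ln\|g_n\cdots g_1x_0\|$, and Theorem~\ref{MDP} applies whenever in addition $\liminf\sig_n/\sqrt n>0$.

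Since the argument is a composition of already-proven statements, it carries no genuine obstacle; the only point that needs care is the moment bookkeeping for $\tilde c(g_j,h_j)$ in the second coupling case — namely checking that the power of $N(h_j)$ that actually occurs in $\tilde c$ (here $5$), together with the a priori $L^\infty$ bound on $N(g_j)$ and an $L^8$ control of $g_j-h_j$, is compatible via Hölder with the $L^{8n_0}$ moment exponent built into Corollary~\ref{Corr}, so that $\te$ (resp.\ $\varepsilon$) can indeed be driven below $\varepsilon_0$.
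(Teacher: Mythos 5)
Your proposal is correct and follows essentially the same route the paper intends (and spells out for the analogous corollary in Section \ref{Pert2}): Corollary \ref{SupCor} gives Assumption \ref{Ass1} for $(h_j)$ with $n_0=1$ via the identity $d(G\bar x,G\bar y)/d(\bar x,\bar y)=\|Gx\wedge Gy\|/(\|x\wedge y\|\|Gx\|\|Gy\|)$, Corollary \ref{Corr} (abstract clause for the $L^1$ case, the ``in particular'' clause with $8n_0=8$ for the $L^8$ case) transfers it to $(g_j)$, and then the abstract results of Section \ref{Sec2} apply under \eqref{N g}. Your moment bookkeeping for $\tilde c(g_j,h_j)$ is also consistent with the paper's hypotheses, so no gap.
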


\subsection{Small perturbations of inhomogeneous random walks on $GL(2,\bbR)$-another approach}\label{Pert2}
In this section we will develop a different approach in the case $d=2$.
We will focus here on the case when $|\det(g_j)|=1$ and at the end of the section we will comment about the general $2\times 2$ case.
Let $(h_j)$ be a sequence of independent matrices such that $|\det(h_j)|=1$ almost surely.


\begin{proposition}\label{Prp}
$(h_j)$ obeys Assumption \ref{Ass1} if there exists  $\varepsilon>0$ such that 
\begin{equation}\label{prop}
 \sup_j\sup_{\|x\|=1}\bbE[\|h_{j,n_0}x\|^{-2\varepsilon}]<1.    
\end{equation}
\end{proposition}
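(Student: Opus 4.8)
The plan is to exploit the fact that in dimension two, when $|\det|=1$, the ratio controlling Assumption \ref{Ass1} splits into an additive expression in the single-vector norm cocycle. Indeed, for any invertible $2\times 2$ matrix $A$ and any unit vectors $x,y$ with distinct directions $\bar x,\bar y$, the elementary identity $|Ax\wedge Ay|=|\det A|\,|x\wedge y|$ together with the definition $d(\bar u,\bar v)=|\hat u\wedge\hat v|$ (with $\hat u,\hat v$ unit representatives of the directions) gives
$$
\frac{d(A\bar x,A\bar y)}{d(\bar x,\bar y)}=\frac{|Ax\wedge Ay|}{\|Ax\|\,\|Ay\|\,|x\wedge y|}=\frac{|\det A|}{\|Ax\|\,\|Ay\|}.
$$
Taking $A=h_{j,n_0}$ and using $|\det h_{j,n_0}|=1$, then passing to logarithms and expectations, the quantity in Assumption \ref{Ass1} becomes
$$
\bbE\!\left[\ln\frac{d(h_{j,n_0}\bar x,h_{j,n_0}\bar y)}{d(\bar x,\bar y)}\right]=-\bbE\big[\ln\|h_{j,n_0}x\|\big]-\bbE\big[\ln\|h_{j,n_0}y\|\big].
$$
The joint dependence on the pair $(\bar x,\bar y)$ thus disappears, and it suffices to bound $\bbE[\ln\|h_{j,n_0}x\|]$ from below, uniformly in $j$ and in the unit vector $x$.

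For that step I would put $\theta:=\sup_j\sup_{\|x\|=1}\bbE\big[\|h_{j,n_0}x\|^{-2\varepsilon}\big]$, which is $<1$ by hypothesis \eqref{prop}, and apply Jensen's inequality to the convex function $t\mapsto e^{-2\varepsilon t}$:
$$
\theta\ \ge\ \bbE\big[\|h_{j,n_0}x\|^{-2\varepsilon}\big]=\bbE\big[e^{-2\varepsilon\ln\|h_{j,n_0}x\|}\big]\ \ge\ e^{-2\varepsilon\,\bbE[\ln\|h_{j,n_0}x\|]},
$$
so that $\bbE[\ln\|h_{j,n_0}x\|]\ge |\ln\theta|/(2\varepsilon)>0$ uniformly in $j$ and $x$. (To make this rigorous one first notes, via $\ln u\le u$, that $(\ln\|h_{j,n_0}x\|)^{-}\le\frac1{2\varepsilon}\|h_{j,n_0}x\|^{-2\varepsilon}\in L^1$ by \eqref{prop}, so $\bbE[\ln\|h_{j,n_0}x\|]$ is well defined in $(-\infty,+\infty]$ and Jensen applies.) Combining the two displays,
$$
\sup_j\sup_{\bar x\neq\bar y}\bbE\!\left[\ln\frac{d(h_{j,n_0}\bar x,h_{j,n_0}\bar y)}{d(\bar x,\bar y)}\right]\le-\frac{|\ln\theta|}{\varepsilon}=:-\delta<0,
$$
which is exactly Assumption \ref{Ass1} with this $n_0$ and $\delta$.

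I do not anticipate a genuine obstacle here: the argument is short once the determinant-one simplification is in place. The two points worth stating carefully are that this simplification is precisely what turns the ``distortion'' into a sum of two terms that no longer see the pair jointly and are governed only by the norm cocycle, and that the inequality in \eqref{prop} is a \emph{uniform} one — the supremum $\theta$ itself is strictly below $1$ — which is what delivers a contraction rate $\delta>0$ independent of $j$. Finally, for the general $\text{GL}(2,\bbR)$ case one writes $g_j=c_j\,h_j$ with the (random) scalar $c_j=|\det g_j|^{1/2}$ and $|\det h_j|=1$; since multiplying by a scalar does not change the projective action, applying the criterion \eqref{prop} to the normalized matrices $h_j$ yields Assumption \ref{Ass1} for $(g_j)$ as well, as announced in the text.
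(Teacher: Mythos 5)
Your proof is correct, and it rests on the same two ingredients as the paper's argument: the determinant-one identity $d(h\bar x,h\bar y)=d(\bar x,\bar y)\,\|hx\|^{-1}\|hy\|^{-1}$ and Jensen's inequality. The difference is only the order in which they are used. The paper first applies Jensen at the level of the pair $(\bar x,\bar y)$ to upgrade the logarithmic average to the $\varepsilon$-moment contraction
$$
\sup_j\sup_{\bar x\ne\bar y}\bbE\!\left[\frac{d^\varepsilon(h_{j,n_0}\bar x,h_{j,n_0}\bar y)}{d^\varepsilon(\bar x,\bar y)}\right]<1,
$$
and only then invokes the identity together with Cauchy--Schwarz to reduce to $\sup_{\|x\|=1}\bbE[\|h_{j,n_0}x\|^{-2\varepsilon}]$. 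You instead apply the identity first, which in the $|\det|=1$ case makes the logarithm split \emph{exactly} as $-\ln\|h_{j,n_0}x\|-\ln\|h_{j,n_0}y\|$, and then apply Jensen to each single-vector norm cocycle; this removes any need for Cauchy--Schwarz. Both routes produce the same contraction rate $\delta=|\ln\theta|/\varepsilon$. A small side remark: the paper's intermediate step actually establishes the $\varepsilon$-H\"older contraction, which is strictly stronger than Assumption \ref{Ass1} and is used elsewhere (Remark \ref{RemIID}); your argument gives the logarithmic contraction directly, which is all that the proposition asks for. Your integrability remark (bounding $(\ln\|h_{j,n_0}x\|)^-$ by $\frac1{2\varepsilon}\|h_{j,n_0}x\|^{-2\varepsilon}$) is a genuine point that the paper leaves implicit, and is needed precisely because this proposition does not assume the uniform bound \eqref{N g} on $(h_j)$.
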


\begin{proof}
 Let $X$ be a positive random variable. Then by Jensen inequality for all $\varepsilon>0$ we have
$$
\varepsilon\bbE[\log(X)]=\bbE[\log(X^\varepsilon)]\leq \log(\bbE[X^\varepsilon]).
$$
Taking $X=\frac{d(h_{j,n_0}\bar x,h_{j,n_0}\bar y)}{d(\bar x,\bar y)}$, $\bar x\not=\bar y$ we get that 
\begin{equation}\label{Upp}
\varepsilon\bbE\left[\log\left(\frac{d(h_{j,n_0}\bar x,h_{j,n_0}\bar y)}{d(\bar x,\bar y)}\right)\right]\leq \log\left(\bbE\left[\frac{d^\varepsilon(h_{j,n_0}\bar x,h_{j,n_0}\bar y)}{d^\varepsilon(\bar x,\bar y)}\right]\right).
\end{equation}
Thus, it is enough to prove that there exists $\varepsilon>0$ such that 
$$
\sup_{j}\sup_{\bar x\not=\bar y}\bbE\left[\frac{d^\varepsilon(h_{j,n_0}\bar x,h_{j,n_0}\bar y)}{d^\varepsilon(\bar x,\bar y)}\right]<1.
$$
Now, using that  $d(h\bar x,h\bar y)=d(\bar x,\bar y)\|hx\|^{-1}\|hy\|^{-1}$ for every $h\in\text{GL}(2,\bbR)$ with $|\det(h)|=1$  and unit vectors $x,y$ (see \cite[(2), page 18]{BougLac}) we get from the Cauchy-Schwartz inequality that
$$
\bbE\left[\frac{d^\varepsilon(h_{j,n_0}\bar x,h_{j,n_0}\bar y)}{d^\varepsilon(\bar x,\bar y)}\right]
\leq \sup_{\|x\|=1}\bbE[\|h_{j,n_0}x\|^{-2\varepsilon}].
$$
\end{proof}

Next, let $h\in\text{GL}(2,\bbR)$ with $|\det(h)|=1$. Recall that the largest singular value of a matrix is its operator norm and that the smallest singular value is the minimum on the unit circle. Consider a singular value decomposition (SVD) of a matrix $h$ with determinant $\pm1$,
$$
h=U\begin{pmatrix}
a& 0 \\
0 & a^{-1}
\end{pmatrix}	
V 
$$
with $U$ and $V$ unitary and $a=\|h\|$. Then 
$$
\inf_{\|x\|=1}\|hx\|=\|h\|^{-1}.
$$

Next consider a random matrix $G$ such that $|\det(G)|=1$ a.s. and consider a random SVD 
$$
G=U\begin{pmatrix}
\|G\|& 0 \\
0 & \|G\|^{-1}
\end{pmatrix}	
V 
$$
with $U$ and $V$ random unitary matrices. Then, 
$$
G=\|G\|u_1\otimes v_1+\|G\|^{-1}u_2\otimes v_2.
$$
Here $u_i$ and $v_i$ are the random columns and rows of the random matrices $U$ and $V$. Thus,
$$
\|G-\|G\|u_1\otimes v_1\|\leq \|G\|^{-1}.
$$

Our approach to verify \eqref{prop} is based on the following lemma.
\begin{lemma}\label{lemma}
Let $\ve_0$ be a sufficiently small positive constant such that $1-2\ve_0\ln(3/2)+4\varepsilon_0^2 2^{2\varepsilon_0}\ln^2(2)<1-\ve_0/2$. 
Let $G$ be a random matrix taking values in $\text{GL}(2,\bbR)$ with $|\det(G)|=1$ a.s. Suppose that there exist constants $\alpha, C>0$ such that for every $\delta>0$ we have
\begin{equation}\label{Rot}
\sup_{\|x\|=1}\bbP(|<u_1,x>|\leq\delta)\leq C\delta^\alpha.
\end{equation}
Let $2<A<B$ be two positive constants such that 
$$
A^\alpha\geq\frac{C2^{\alpha}B^{2\ve_0}}{\ve_0}.
$$
Suppose also that
$$
A\leq\|G\|\leq B
$$ 
almost surely. Then  
$$
\sup_{\|x\|=1}\bbE[\|Gx\|^{-2\varepsilon_0}]\leq 1-\varepsilon_0/4.
$$
\end{lemma}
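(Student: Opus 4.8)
The plan is to estimate $\bbE[\|Gx\|^{-2\ve_0}]$ for a fixed unit vector $x$ by splitting according to whether $|\langle u_1,x\rangle|$ is small or not. The key geometric fact is that since $G = \|G\|\, u_1\otimes v_1 + \|G\|^{-1} u_2\otimes v_2$, for any unit vector $x$ we have $\|Gx\|^2 = \|G\|^2\langle u_1,x\rangle^2 + \|G\|^{-2}\langle u_2,x\rangle^2 \geq \|G\|^2\langle u_1,x\rangle^2$. Hence $\|Gx\|^{-2\ve_0} \leq \|G\|^{-2\ve_0}|\langle u_1,x\rangle|^{-2\ve_0}$. On the ``good'' event $\{|\langle u_1,x\rangle| \geq \delta_0\}$ for a threshold $\delta_0$ to be chosen, this gives $\|Gx\|^{-2\ve_0} \leq \|G\|^{-2\ve_0}\delta_0^{-2\ve_0} \leq A^{-2\ve_0}\delta_0^{-2\ve_0}$, using $\|G\|\geq A$. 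Picking $\delta_0 = 1/2$ and noting $A>2$ gives $\|Gx\|^{-2\ve_0}\bbI_{\{|\langle u_1,x\rangle|\geq 1/2\}} \leq (A/2)^{-2\ve_0} = (2/A)^{2\ve_0}$, which we then compare to the expansion $e^{-2\ve_0\ln(A/2)} \leq 1 - 2\ve_0\ln(A/2) + 2\ve_0^2\ln^2(A/2)(\cdots)$; here the hypothesis $A>2$ together with the precise form of $\ve_0$ in the lemma's first sentence is designed so that this contribution is at most $1 - 2\ve_0\ln(3/2) + 4\ve_0^2 2^{2\ve_0}\ln^2 2 < 1 - \ve_0/2$ (the $\ln(3/2)$ and $\ln 2$ appearing because $A/2$ can be taken close to $3/2$ in the worst case and $\ln(A/2)\leq \ln(B/2)$ is controlled, but actually the clean bound uses $A/2 > 1$, i.e., $\ln(A/2)>0$, combined with an elementary two-term Taylor estimate of $t\mapsto t^{-2\ve_0}$ near $t=1$).

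Second, on the ``bad'' event $\{|\langle u_1,x\rangle| < 1/2\}$ we cannot use the first estimate, so instead bound $\|Gx\|^{-2\ve_0} \leq A^{-2\ve_0}|\langle u_1,x\rangle|^{-2\ve_0} \leq B^{2\ve_0}|\langle u_1,x\rangle|^{-2\ve_0}$ is too lossy; rather one keeps $\|Gx\|^{-2\ve_0}\leq \|G^{-1}\|^{2\ve_0}\leq \|G\|^{2\ve_0} \leq B^{2\ve_0}$ crudely (since $|\det G|=1$ forces $\|G^{-1}\| = \|G\|$ for $2\times2$ matrices). Then
$$
\bbE\!\left[\|Gx\|^{-2\ve_0}\bbI_{\{|\langle u_1,x\rangle|<1/2\}}\right] \leq B^{2\ve_0}\,\bbP(|\langle u_1,x\rangle| < 1/2).
$$
This is still not small enough, so one should instead further dyadically decompose the bad event into shells $\{2^{-k-1}\leq |\langle u_1,x\rangle| < 2^{-k}\}$, $k\geq 1$, on each of which $\|Gx\|^{-2\ve_0}\leq B^{2\ve_0} 2^{2\ve_0(k+1)}$ while the probability is at most $C 2^{-\alpha k}$ by \eqref{Rot}. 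Summing the geometric series $\sum_{k\geq 1} C 2^{\alpha} B^{2\ve_0} 2^{2\ve_0 k} 2^{-\alpha k}$, which converges provided $2\ve_0 < \alpha$ (true for $\ve_0$ small), gives a bound of the form $\frac{C2^\alpha B^{2\ve_0}}{2^{\alpha-2\ve_0}-1}\cdot 2^{-(\alpha-2\ve_0)}$ — and this is precisely where the quantitative hypothesis $A^\alpha \geq C2^\alpha B^{2\ve_0}/\ve_0$ enters: it forces the contribution of the bad event to be $\leq \ve_0/4$ (after also using $A^{-2\ve_0}\leq 1$ and absorbing the geometric-series constant, which is $O(1/\ve_0)$ since $\alpha - 2\ve_0$ is bounded below, making the total $O(\ve_0 B^{2\ve_0}/A^\alpha) = O(1)\cdot\ve_0 \cdot (\text{small})$; one checks the constants line up to give $\ve_0/4$).

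Combining the two parts yields $\bbE[\|Gx\|^{-2\ve_0}] \leq (1 - \ve_0/2) + \ve_0/4 = 1 - \ve_0/4$, uniformly in $x$, which is the claim. The main obstacle I anticipate is bookkeeping the constants so that the dyadic tail in the bad-event estimate is genuinely controlled by $\ve_0/4$ rather than merely by ``something small'': one must track that the geometric ratio $2^{2\ve_0 - \alpha}$ stays bounded away from $1$ (this needs a lower bound on $\alpha$, or rather just $\ve_0 \leq \alpha/4$ say), that the factor $B^{2\ve_0}$ does not blow up (it is $\leq e^{2\ve_0\ln B}$, harmless for fixed $B$ and small $\ve_0$, but the hypothesis $A^\alpha \geq C2^\alpha B^{2\ve_0}/\ve_0$ is what ties $B$ and $A$ together so that the $\ve_0$ in the denominator exactly compensates), and that the Taylor-remainder estimate on the good event really lands below $1 - \ve_0/2$ with the specific numbers $\ln(3/2)$, $2^{2\ve_0}$, $\ln^2 2$ as written. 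A secondary subtlety is making sure every inequality is uniform in the unit vector $x$ — this is automatic since \eqref{Rot} is assumed with a supremum over $\|x\|=1$ and the bound $A\leq\|G\|\leq B$ is almost sure, so no issue arises there.
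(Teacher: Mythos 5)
There is a genuine gap, and it lies in your choice of a fixed threshold $\delta_0=1/2$ on the good event. With that threshold you obtain $\|Gx\|\geq A/2>1$, but $A/2$ can be arbitrarily close to $1$ (take $A$ slightly above $2$), so $(A/2)^{-2\ve_0}$ can be as close to $1$ as one likes --- in particular far above $1-\ve_0/2$ --- and since the bad-event contribution is nonnegative, the total would then exceed $1-\ve_0/4$ and the conclusion would fail. Your parenthetical ``$A/2$ can be taken close to $3/2$ in the worst case'' is the wrong direction: the worst case for your scheme is $A$ near $2$, where $A/2$ is near $1$, not $3/2$, and a Taylor estimate of $t\mapsto t^{-2\ve_0}$ near $t=1$ gives a quantity near $1$, not $1-\ve_0/2$. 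The specific constants $\ln(3/2)$ and $\ln 2$ in the lemma's $\ve_0$-condition are a signal that the threshold must be chosen to make the good-event lower bound on $\|Gx\|$ sit uniformly in the window $(3/2,2)$, which a fixed $\delta_0=1/2$ does not do.

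The paper instead takes $\delta=2/A$, adapted to $A$. On $\{|\langle u_1,x\rangle|>\delta\}$ the triangle inequality applied to $G=\|G\|u_1\otimes v_1+\|G\|^{-1}u_2\otimes v_2$ gives $\|Gx\|\geq \|G\||\langle u_1,x\rangle|-\|G\|^{-1}\geq A\cdot(2/A)-A^{-1}=2-A^{-1}$, which lies in $(3/2,2)$ for every $A>2$. Then $e^{u}\leq 1+u+u^{2}e^{|u|}$ with $u=-2\ve_0\ln(2-A^{-1})$ yields exactly $1-2\ve_0\ln(3/2)+4\ve_0^2 2^{2\ve_0}\ln^2 2$, which is the quantity in the lemma's hypothesis. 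With this adapted threshold the bad event $\{|\langle u_1,x\rangle|\leq 2/A\}$ already has probability at most $C(2/A)^\alpha$, and one single crude bound $\|Gx\|^{-2\ve_0}\leq B^{2\ve_0}$ there, combined with $A^{\alpha}\geq C2^{\alpha}B^{2\ve_0}/\ve_0$, closes the argument; no dyadic decomposition is needed. As a secondary issue, your dyadic-shell estimate is itself not internally consistent: on a shell where $|\langle u_1,x\rangle|\sim 2^{-k}$ you write $\|Gx\|^{-2\ve_0}\leq B^{2\ve_0}2^{2\ve_0(k+1)}$, but the Pythagorean bound gives $\|G\|^{-2\ve_0}|\langle u_1,x\rangle|^{-2\ve_0}\leq A^{-2\ve_0}2^{2\ve_0(k+1)}$ while the crude bound gives only $B^{2\ve_0}$; neither produces your product, and the geometric-series constant you invoke does not obviously match the $1/\ve_0$ in the hypothesis. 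The fix is simply to replace $1/2$ by $2/A$, at which point the decomposition collapses to the paper's two-piece estimate.
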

\begin{remark}
 The matrix $U$ is rotation matrix and $u_1$ is the rotation vector, i.e. its components are $\cos(\beta)$ and $\sin(\beta)$ where $\beta$ is the random angle of rotation. Then condition \eqref{Rot} means that the law of $\beta$ is $\alpha$ regular, namely the probability that it falls within a small range of angles of size $\delta$ is of order $O(\delta^\alpha)$. 
 
 Another way to view condition \eqref{Rot} id to view $u_1$ as a random variable on the unit disc. Then \eqref{Rot} means that  the measure of an arch of length $\delta$ is $O(\delta^\alpha)$. 
 
 In both points of view the condition can be viewed as a quantitative statement that  the distribution of $u_1$ (or $\beta$)  has upper Hausdorff dimensional smaller than $\alpha$.
\end{remark}
\begin{corollary}
Let $g_j$ and $h_j$ be two sequence of independent invertible matrices in $\text{GL}(2,\bbR)$ such that $|\det(h_j)|=1$, a.s.  
Let us consider a random SVD of each $h_j$ 
$$
h_j=U_j\begin{pmatrix}
\|h_j\|& 0 \\
0 & \|h_j\|^{-1}
\end{pmatrix}	
V_j. 
$$
Let $u_j$ be the first column of $U_j$ and suppose that there are constants $\alpha,C>0$ such that for all $j$ and $\delta>0$,
$$
\sup_{\|x\|=1}\bbP(|<u_j,x>|\leq\delta)\leq C\delta^\alpha.
$$
Let $A$ and $B$ like in Lemma \ref{lemma} and assume that for all $j$,
$$
A\leq\|h_j\|\leq B
$$ 
almost surely. Then the random product $S_n=g_n\cdots g_1$ satisfies all the results in Section \ref{Sec2} if \eqref{N g} holds and we can couple $(g_j)$ and $(h_j)$ such that $\sup_j\|g_j-h_j\|_{L^1}$ is small enough. 
\end{corollary}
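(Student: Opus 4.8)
The plan is to verify the contraction estimate \eqref{prop} for the unperturbed sequence $(h_j)$ with $n_0=1$ using Lemma \ref{lemma}, deduce Assumption \ref{Ass1} for $(h_j)$ from Proposition \ref{Prp}, transfer Assumption \ref{Ass1} to $(g_j)$ by the perturbation result Corollary \ref{Corr}, and then quote Section \ref{Sec2}: once $(g_j)$ satisfies \eqref{N g} and Assumption \ref{Ass1}, Proposition \ref{Approx Prop} and the theorems that follow from it apply verbatim.

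First I would fix $j$ and apply Lemma \ref{lemma} with $G=h_j$. Its hypotheses hold uniformly in $j$: the regularity bound $\sup_{\|x\|=1}\bbP(|<u_j,x>|\leq\delta)\leq C\delta^\alpha$ is assumed for all $j$ with the same $C,\alpha$; $|\det h_j|=1$ a.s.; $A\leq\|h_j\|\leq B$ a.s.; and $A,B$ are chosen to satisfy the inequality required in the lemma. Therefore $\sup_j\sup_{\|x\|=1}\bbE[\|h_j x\|^{-2\varepsilon_0}]\leq 1-\varepsilon_0/4<1$, which is exactly \eqref{prop} with $n_0=1$ and $\varepsilon=\varepsilon_0$. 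By Proposition \ref{Prp}, the sequence $(h_j)$ obeys Assumption \ref{Ass1} with $n_0=1$ and some $\delta>0$ (one may take $\delta=\varepsilon_0^{-1}|\ln(1-\varepsilon_0/4)|$, as is visible from \eqref{Upp}).

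Next I would check that $(h_j)$ satisfies \eqref{N g}, which is needed to invoke the perturbation result: for a $2\times 2$ matrix with $|\det h_j|=1$ the singular values are $\|h_j\|$ and $\|h_j\|^{-1}$, so $\|h_j^{-1}\|=\|h_j\|$ and hence $N(h_j)=\|h_j\|\leq B$ a.s. Since $(g_j)$ satisfies \eqref{N g}, there is also $K_0<\infty$ with $N(g_j)\leq K_0$ a.s.\ for all $j$. Taking $n_0=1$ in Corollary \ref{Corr}, the quantity $\tilde c(g_j,h_j)$ is the product of $\|g_j-h_j\|$ with factors built only from $\|g_j\|,\|g_j^{-1}\|,\|h_j\|,\|h_j^{-1}\|$, all bounded a.s.\ by a constant depending only on $K_0$ and $B$; thus $\tilde c(g_j,h_j)\leq M\|g_j-h_j\|$ a.s.\ for some uniform $M$, and $\theta:=\sup_j\bbE[\tilde c(g_j,h_j)]\leq M\sup_j\|g_j-h_j\|_{L^1}$. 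Hence, if $\sup_j\|g_j-h_j\|_{L^1}$ is small enough, $\theta$ falls below the threshold $\varepsilon_0(\delta,1)$ of Corollary \ref{Corr}, and the corollary gives Assumption \ref{Ass1} for $(g_j)$ with $n_0=1$ and some $\delta_1>0$.

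Finally, since $(g_j)$ satisfies \eqref{N g} and Assumption \ref{Ass1}, Proposition \ref{Approx Prop} applies to it, and therefore all the statements of Section \ref{Sec2} --- Theorems \ref{Var them}, \ref{BE}, \ref{ThWass}, \ref{ASIP}, \ref{ConcentrationIneq}, and Theorem \ref{MDP} under the additional hypothesis $\liminf\sigma_n/\sqrt n>0$ --- hold for $S_n=\ln\|g_n\cdots g_1 x_0\|$. I do not anticipate a real obstacle: the substance of the corollary is just Lemma \ref{lemma} (whose proof is the genuinely delicate part, and which is already established) combined with the soft machinery of Corollary \ref{Corr}. The two points that need attention are that the hypothesis $|\det h_j|=1$ is precisely what forces $N(h_j)$ to be bounded --- hence \eqref{N g} for the reference sequence --- even though no determinant condition is imposed on $(g_j)$; and that a single smallness threshold for $\sup_j\|g_j-h_j\|_{L^1}$ suffices, uniformly in $j$, because the a.s.\ bounds on $N(g_j)$ and $N(h_j)$ turn $L^1$-closeness into smallness of $\theta$.
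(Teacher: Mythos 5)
Your proof is correct and follows exactly the paper's route: verify \eqref{prop} for $(h_j)$ via Lemma \ref{lemma} (with $n_0=1$), deduce Assumption \ref{Ass1} from Proposition \ref{Prp}, then transfer to $(g_j)$ via Corollary \ref{Corr}. The paper's own proof is just a terse two-sentence version of this; you supply the details (in particular, why $N(h_j)\leq B$ a.s.\ and why the a.s.\ bounds on $N(g_j),N(h_j)$ reduce the hypothesis of Corollary \ref{Corr} to smallness of $\sup_j\|g_j-h_j\|_{L^1}$) accurately.
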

\begin{proof}
By Lemma \ref{lemma} and Proposition \ref{Prp} we see that Assumption \ref{Ass1} is in force for the sequence $h_j$. Now we apply the perturbation results from the previous section.
\end{proof}

\begin{proof}[Proof of Lemma \ref{lemma}]
Fix some unit vector $x$. Let us take $\delta=2/A$. Denote $J=\{|<u_1,x>|\leq\delta\}$. Then 
$$
\bbE[\|Gx\|^{-2\varepsilon_0}]=\bbE[\|Gx\|^{-2\varepsilon_0}\bbI_{J^c}]+\bbE[\|Gx\|^{-2\varepsilon_0}\bbI_J]:=I_1+I_2.
$$
To bound $I_1$ we note that 
$$
\bbE[\|Gx\|^{-2\varepsilon_0}\bbI_{J^c}]\leq (A\delta-A^{-1})^{-2\varepsilon_0}=e^{-2\varepsilon_0\ln(A\delta-A^{-1})}
=(2-A^{-1})^{-2\varepsilon_0}=e^{-2\varepsilon_0\ln(2-A^{-1})}
$$
Using the inequality $e^u\leq 1+u+u^2e^{|u|}$ we see that 
$$
e^{-2\varepsilon_0(2a-A^{-1})}\leq 1-2\varepsilon_0\ln(3/2)+4\varepsilon_0^2 2^{2\varepsilon_0}\ln^2(2)\leq 1-\ve_0/2.
$$
On the other hand, since $\|Gx\|\geq \|G\|^{-1}$ (as $\|G\|^{-1}$ is the minimal singular value) we see that 
$$
I_2\leq C\delta^\alpha B^{2\varepsilon_0}.
$$
Therefore, recalling that $\delta=2/A$,
$$
\bbE[\|Gx\|^{-2\varepsilon_0}]\leq 1-\ve_0/2+C(2/A)^\alpha B^{2\varepsilon_0}\leq 1-\varepsilon_0/4.
$$
\end{proof}

In Lemma \ref{lemma} we focused on the case of random matrices with uniformly bounded norms. The following more technical result generalizes Lemma \ref{lemma} to the unbounded case.

\begin{lemma}\label{lemma1}
Let $\ve_0$ be a sufficiently small positive constant such that $1-2\ve_0\ln(3/2)+4\varepsilon_0^2 2^{2\varepsilon_0}\ln^2(2)<1-\ve_0/2$. 
Let $G$ be a random matrix taking values in $\text{GL}(2,\bbR)$ such that $|\det(G)|=1$, a.s. Suppose that there exist  constants $\alpha,C>0$ such that for every $\delta>0$ we have
$$
\sup_{\|x\|=1}\bbP(|<u_1,x>|\leq\delta)\leq C\delta^\alpha.
$$ 
Let $A>2$ and $D\geq 0$ be constants such that 
$$
\bbP(\|G\|<A)\leq D A^{-\al}.
$$
Let $q,p$ be two conjugate exponents such that $q<\infty$ and  $A<B$ such that 
$$
\left\|\|G\|\right\|_{L^{2\varepsilon_0 p}}\leq B
$$
and
\begin{equation}\label{AB}
A^{\alpha/q}>\frac{4B^{2\ve_0}(D^{1/q}+2^{\al/q}C^{1/q})}{\ve_0}.    
\end{equation}
 Then  
$$
\sup_{\|x\|=1}\bbE[\|gx\|^{-2\varepsilon_0}]\leq 1-\varepsilon_0/4.
$$
\end{lemma}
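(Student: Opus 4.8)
The plan is to mirror the proof of Lemma \ref{lemma}, splitting the expectation according to whether the first singular direction $u_1$ is nearly orthogonal to $x$, but now carrying the (unbounded) norm $\|G\|$ through H\"older's inequality. Fix a unit vector $x$, set $\delta=2/A$, and write $J=\{|\langle u_1,x\rangle|\le\delta\}$, $K=\{\|G\|<A\}$, so that
$$
\bbE[\|Gx\|^{-2\ve_0}]=\bbE[\|Gx\|^{-2\ve_0}\bbI_{J^c\cap K^c}]+\bbE[\|Gx\|^{-2\ve_0}\bbI_{J}]+\bbE[\|Gx\|^{-2\ve_0}\bbI_{K}]:=I_1+I_2+I_3.
$$
On $J^c\cap K^c$ we have $\|Gx\|\ge A\delta-A^{-1}=2-A^{-1}$ exactly as in Lemma \ref{lemma} (using $G=\|G\|u_1\otimes v_1+\|G\|^{-1}u_2\otimes v_2$, so $\|Gx\|\ge\|G\|\,|\langle v_1,x\rangle|$ after the obvious symmetry — here one should note the bound is in terms of $|\langle u_1,\cdot\rangle|$ for $G^{\top}$, but since the hypothesis is stated for $G$ and $\|Gx\|=\|G^\top\|$-type quantities match, one uses the representation to get $\|Gx\|^2=\|G\|^2\langle v_1,x\rangle^2+\|G\|^{-2}\langle v_2,x\rangle^2$; the event $J$ should really be phrased using $v_1$, or one applies the hypothesis to the transpose, whose first left singular vector is $v_1$). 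Then $I_1\le (2-A^{-1})^{-2\ve_0}\le 1-\ve_0/2$ by the elementary inequality $e^u\le 1+u+u^2e^{|u|}$, verbatim from Lemma \ref{lemma}.

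For $I_2$ I would use $\|Gx\|\ge\|G\|^{-1}$ together with H\"older's inequality with conjugate exponents $q,p$:
$$
I_2=\bbE[\|Gx\|^{-2\ve_0}\bbI_J]\le \bbE[\|G\|^{2\ve_0}\bbI_J]\le \big\|\,\|G\|\,\big\|_{L^{2\ve_0 p}}^{2\ve_0}\,\bbP(J)^{1/q}\le B^{2\ve_0}\,(C\delta^\alpha)^{1/q}=B^{2\ve_0}C^{1/q}(2/A)^{\alpha/q}.
$$
For $I_3$ I would likewise use $\|Gx\|\ge\|G\|^{-1}$ and H\"older, now with the tail bound $\bbP(K)=\bbP(\|G\|<A)\le DA^{-\alpha}$:
$$
I_3\le \bbE[\|G\|^{2\ve_0}\bbI_K]\le \big\|\,\|G\|\,\big\|_{L^{2\ve_0 p}}^{2\ve_0}\,\bbP(K)^{1/q}\le B^{2\ve_0}(DA^{-\alpha})^{1/q}=B^{2\ve_0}D^{1/q}A^{-\alpha/q}.
$$
Adding the three pieces gives
$$
\bbE[\|Gx\|^{-2\ve_0}]\le 1-\ve_0/2+B^{2\ve_0}A^{-\alpha/q}\big(2^{\alpha/q}C^{1/q}+D^{1/q}\big),
$$
and hypothesis \eqref{AB} — which says $A^{\alpha/q}>4B^{2\ve_0}(D^{1/q}+2^{\alpha/q}C^{1/q})/\ve_0$, hence $B^{2\ve_0}A^{-\alpha/q}(2^{\alpha/q}C^{1/q}+D^{1/q})<\ve_0/4$ — yields the claimed bound $1-\ve_0/4$, uniformly in $x$.

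The only genuine subtlety, and the step I would be most careful about, is bookkeeping which singular vector the regularity hypothesis \eqref{Rot} controls: the quantity $\|Gx\|$ is governed by the \emph{right} singular vectors $v_i$ of $G$ (equivalently the \emph{left} singular vectors of $G^\top$), whereas the statement phrases \eqref{Rot} in terms of $u_1$. One resolves this either by observing that the hypotheses are symmetric under $G\mapsto G^\top$ (so one may assume the regularity is on $v_1$), or by writing $\|Gx\|^2=\|G\|^2\langle v_1,x\rangle^2+\|G\|^{-2}\langle v_2,x\rangle^2$ and defining $J=\{|\langle v_1,x\rangle|\le\delta\}$ throughout; this is purely cosmetic relative to Lemma \ref{lemma}, where the same convention is implicitly in force. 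Everything else is a direct H\"older-plus-Taylor estimate with no new obstacle.
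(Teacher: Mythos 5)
Your proof is correct and follows the same route as the paper: the same split by the events $J=\{|\langle\cdot,x\rangle|\le\delta\}$ with $\delta=2/A$ and $K=\{\|G\|<A\}$, the same elementary bound $\|Gx\|\ge 2-A^{-1}$ on $J^c\cap K^c$ via the inequality $e^u\le 1+u+u^2e^{|u|}$, and the same H\"older-plus-$\|Gx\|\ge\|G\|^{-1}$ treatment of the remainder (whether you keep the $J$ and $K$ contributions as two terms or merge them into one, as the paper does, is immaterial). Your aside on $u_1$ versus $v_1$ is also on target: since $Gx=\|G\|u_1\langle v_1,x\rangle+\|G\|^{-1}u_2\langle v_2,x\rangle$ gives $\|Gx\|\ge\|G\|\,|\langle v_1,x\rangle|-\|G\|^{-1}$, the lower bound on $\|Gx\|$ is governed by the \emph{right} singular direction $v_1$ (the first row of $V$), so the regularity hypothesis written in terms of $u_1$ should be read as a hypothesis on $v_1$, i.e.\ on the first left singular vector of $G^\top$; the paper's proof (and that of Lemma~\ref{lemma}) glosses over this, and your fix — apply the hypothesis to $G^\top$, or restate $J$ with $v_1$ — is the right one.
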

When taking $D=0$ and $q=1$ we recover Lemma \ref{lemma} as a particular case, but we decided to state Lemma \ref{lemma} separately for the sake of clarity.

\begin{corollary}
Let $g_j$ and $h_j$ be two sequence of independent invertible matrices in $\text{GL}(2,\bbR)$ and suppose $|\text{det}(h_j)|=1$ a.s.  
 Suppose that  all $h_j$ satisfy the assumptions on $G$ in the previous lemma with the same constants $A,B,C,D,\alpha$. 
Then the random product $S_n=g_n\cdots g_1$ satisfies all the results in Section \ref{Sec2} if \eqref{N g} holds and we can couple $(g_j)$ and $(h_j)$ such that $\sup_j\|g_j-h_j\|_{L^8}$ is small enough and $\sup_j\|N(h_j)\|_{L^8}<\infty$. 
\end{corollary}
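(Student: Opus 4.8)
The plan is to obtain this corollary by chaining together Lemma~\ref{lemma1}, Proposition~\ref{Prp} and the perturbation Corollary~\ref{Corr}, in the same spirit as the corollary following Lemma~\ref{lemma}. Throughout I would take $n_0=1$ in Assumption~\ref{Ass1}; this is the natural choice, since Lemma~\ref{lemma1} is a statement about a single matrix.

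First I would apply Lemma~\ref{lemma1} with $G=h_j$ for each $j$. The hypotheses needed there hold for every $h_j$ by assumption, with the same constants $A,B,C,D,\alpha$ (and the same conjugate exponents $p,q$): namely $|\det(h_j)|=1$ a.s., the $\alpha$-regularity estimate $\sup_{\|x\|=1}\bbP(|<u_j,x>|\le\delta)\le C\delta^\alpha$, the tail bound $\bbP(\|h_j\|<A)\le DA^{-\alpha}$, the moment bound $\left\|\,\|h_j\|\,\right\|_{L^{2\varepsilon_0 p}}\le B$, and inequality \eqref{AB}. Lemma~\ref{lemma1} then gives $\sup_{\|x\|=1}\bbE[\|h_jx\|^{-2\varepsilon_0}]\le 1-\varepsilon_0/4$ for every $j$, hence $\sup_j\sup_{\|x\|=1}\bbE[\|h_jx\|^{-2\varepsilon_0}]\le 1-\varepsilon_0/4<1$. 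Feeding this bound into Proposition~\ref{Prp} with $n_0=1$ and $\varepsilon=\varepsilon_0$ shows that the sequence $(h_j)$ obeys Assumption~\ref{Ass1} with $n_0=1$ and $\delta=\varepsilon_0^{-1}|\ln(1-\varepsilon_0/4)|>0$.

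Next I would transfer Assumption~\ref{Ass1} to $(g_j)$ using the moment part of Corollary~\ref{Corr} with $n_0=1$. Its hypotheses there are $C_1:=\sup_j\|1+N(h_j)\|_{L^8}<\infty$, $C_2:=\sup_j\|1+N(g_j)\|_{L^8}<\infty$, and $\sup_j\|h_j-g_j\|_{L^8}\le\varepsilon$: the first follows from the assumed $\sup_j\|N(h_j)\|_{L^8}<\infty$, the second from \eqref{N g} since $\|\cdot\|_{L^8}\le\|\cdot\|_{L^\infty}$, and the third is the coupling hypothesis. Taking $\varepsilon$ smaller than the threshold furnished by Corollary~\ref{Corr} (which depends only on $\delta$, $C_1$, $C_2$ and the choice $n_0=1$) gives that $(g_j)$ also satisfies Assumption~\ref{Ass1}, with $n_0=1$ and some $\delta_1>0$. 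Since $(g_j)$ obeys \eqref{N g} by assumption, Proposition~\ref{Approx Prop} applies to $(g_j)$, and therefore so do all the results of Section~\ref{Sec2}, i.e. Theorems~\ref{Var them}, \ref{BE}, \ref{ThWass}, \ref{ASIP}, \ref{ConcentrationIneq} and \ref{MDP}, for $S_n=g_n\cdots g_1$.

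I do not expect a genuine analytic obstacle here: the corollary is a direct assembly of the preceding results, and the only point that requires care is the bookkeeping of moment exponents — one must check that the $L^8$ integrability and $L^8$ closeness assumed in the statement match the ``$L^{8n_0}$'' requirements of Corollary~\ref{Corr} for $n_0=1$. It is worth recording that for a $2\times 2$ matrix of determinant $\pm1$ the singular values are $a$ and $a^{-1}$, so $N(h_j)=\|h_j\|$; thus $\sup_j\|N(h_j)\|_{L^8}<\infty$ is simply a uniform $L^8$ bound on $\|h_j\|$, sitting naturally next to the bound $\left\|\,\|h_j\|\,\right\|_{L^{2\varepsilon_0 p}}\le B$ used in Lemma~\ref{lemma1}. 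If one instead wanted a general $n_0>1$, the only extra ingredient would be a routine telescoping bound for $\|h_{j,n_0}-g_{j,n_0}\|$ and $\tilde c(g_{j,n_0},h_{j,n_0})$ in $L^8$ in terms of $\sup_k\|h_k-g_k\|_{L^{8n_0}}$ and the uniform moment bounds.
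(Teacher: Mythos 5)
Your proposal is correct and follows essentially the same route the paper takes for the analogous corollary after Lemma~\ref{lemma} (apply the lemma to get $\sup_{\|x\|=1}\bbE[\|h_jx\|^{-2\varepsilon_0}]<1$, feed this into Proposition~\ref{Prp} with $n_0=1$ to obtain Assumption~\ref{Ass1} for $(h_j)$, then pass to $(g_j)$ via the $L^{8n_0}$-moment version of Corollary~\ref{Corr}). Your bookkeeping — identifying the $\varepsilon$ of Proposition~\ref{Prp} with the $\varepsilon_0$ of Lemma~\ref{lemma1}, taking $n_0=1$ so that $L^{8n_0}=L^8$, and noting $N(h_j)=\|h_j\|$ when $|\det h_j|=1$ — is exactly what makes the hypotheses of the corollary line up with those of Corollary~\ref{Corr}.
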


\begin{proof}[\label[Proof of Lemma \ref{lemma1}]
    Fix some unit vector $x$. Let us take $\delta=2/A$. Denote $J=\{|<u_1,x>|\leq \delta\}$ and $K=\{\|G\|<A\}$
$$
\bbE[\|Gx\|^{-2\varepsilon_0}]=\bbE[\|gx\|^{-2\varepsilon_0}\bbI_{J^c}\bbI_{K^c}]+\bbE[\|Gx\|^{-2\varepsilon_0}(\bbI_J+\bbI_{K})]=I_1+I_2.
$$
To bound $I_1$ we note that 
$$
\bbE[\|Gx\|^{-2\varepsilon_0}\bbI_{J^c}\bbI_{K^c}]\leq (A\delta-A^{-1})^{-2\varepsilon_0}=e^{-2\varepsilon_0\ln(A\delta-A^{-1})}
=(2-A^{-1})^{-2\varepsilon_0}=e^{-2\varepsilon_0\ln(2-A^{-1})}.
$$
Using again the inequality $e^u\leq 1+u+u^2e^{|u|}$ we see that 
$$
e^{-2\varepsilon_0\ln(2-A^{-1})}\leq 1-2\varepsilon_0\ln(3/2)+4\varepsilon^2 2^{2\varepsilon_0}\ln^2(2)\leq 1-\ve_0/2.
$$
On the other hand, since $\|Gx\|\geq \|G\|^{-1}$ and using also the H\"older inequality we get that
$$
I_2\leq \left\|\|G\|\right\|_{L^{2\varepsilon_0 p}}^{2\varepsilon_0}\left((\bbP(\|G\|<A)^{1/q})+(\bbP(|<u,x>|\leq\delta))^{1/q}\right).
$$
Therefore, recalling that $\delta=2/A$ and using the assumptions on the above probabilities and \eqref{AB} we conclude that
$$
\bbE[\|Gx\|^{-2\varepsilon_0}]\leq 1-\varepsilon_0/4.
$$
\end{proof}

\begin{remark}
For random invertible matrices $h_j$ in $\text{GL}(2,\bbR)$ we can just replace $h_j$ with $\tilde h_j=g_j/\sqrt{|\det(h_j)|^{1/2}}$ and then verify the conditions of Lemma \ref{lemma1} with each matrix $\tilde h_j$. Indeed such a normalization only changes $S_n$ by a constant $c_n$ that depends only on $n$. Of course, we can also perturb $h_j$.    
\end{remark}

\subsection{Application to contracting in norm  matrices}\label{Decay}
Since $d(Ax,Ay)=\|Ax\wedge Ay\|\leq \|A\|^2\|x\wedge y\|$ for every matrix $A$ we see that Assumption \ref{Ass1} holds with $n_0=1$ if 
$$
\sup_j\left(\bbE[\ln\|g_j\|]+\bbE[\ln\|g_j^{-1}\|]\right)<1/2.
$$ 
We note that such assumptions can force the norm of $g_n\cdots g_1$ to bounded above, which is in contrast to the classical theory of Furstenberg  that guarantees that the norms grow exponentially fast. However, when taking the logarithm the absolute value of the norm can still be large and so the CLT still makes sense.

\section{Extension to Markov dependent random matrices}\label{Markov}
Let us assume that $(g_j)$ is a Markov chain. In what follows we could also consider the case when $g_j=h_j(X_j)$ for some Markov chain $X_j$ and a $\text{GL}(d,\bbR)$-valued function $h_j$, but in order to avoid heavy notation we will discuss only the case when $(g_j)$ is a Markov chain.
\begin{assumption}\label{Ass2}
There exist $n_0$ and $\delta>0$ such that for every $j$ and two distinct directions $\bar x$ and $\bar y$,
$$
\bbE\left[\ln\left(\frac{d(g_{j,n_0}\bar x,g_{j,n_0}\bar y}{d(\bar x,\bar y)}\right)\Big|g_{j-1}\right]\leq -\delta
$$
almost surely,
where $g_{j,n}=g_{j+n-1}\cdots g_j$.
\end{assumption}
Note that Assumption \ref{Ass2} is stronger than Assumption \ref{Ass1}.
Under this assumption the proof of Proposition \ref{Exp Prop} proceeds similarly with the following modifications. We define $$
F_{j+m}(\bar x,\bar y)=\bbE\left[\ln\left(\frac{d(g_{j+m}\bar x, g_{j+m}\bar y)}{d(\bar x,\bar y)}\right)\Big|g_{j+m-1}\right].
$$
Then
$$
F_{j+m}(g_{j,m}\bar x,g_{j,m}\bar y)=
\bbE\left[\ln\left(\frac{d(g_{j,m+1}\bar x, g_{j,m+1}\bar y)}{d(g_{j,n}\bar x,g_{j,m}\bar y)}\right)\Big|g_j,....,g_{j+m-1}\right].
$$
Thus, defining $M_j$ like in the proof of Proposition \ref{Exp Prop} we see that it is still a martingale with uniformly bounded differences. The rest of the proof proceeds similarly.

Building on the validity of Proposition \ref{Exp Prop} the following version of Proposition \ref{Approx Prop} follows.

\begin{proposition}\label{Approx Prop1}
Suppose that for all $k$ and all $m$ the law of $(g_{k+m},...,g_{k+1})$ given $g_k$ is absolutely continuous with respect to the unconditioned laws of $(g_{k+m},...,g_{k+1})$ with uniformly bounded densities.

Then, under \eqref{N g} and Assumption \ref{Ass2}
there exists $\rho\in(0,1)$ such that for all finite $p\geq 1$,
$$
\sup_k\sup_r\rho^{-r/p}\|X_k-\bbE[X_k|g_k,g_{k-1},...,g_{k-r}]\|_{L^p}<\infty
$$
where we extend $(g_j)$ to a two sided sequence by setting $g_k=A, k<1$ for some fixed invertible matrix $A$. 
\end{proposition}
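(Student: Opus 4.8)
The plan is to transcribe the proof of Proposition \ref{Approx Prop} essentially word for word---this is possible because Proposition \ref{Exp Prop} is now available in its Markov form, as explained above---replacing the single step that exploited independence (the integration over the laws of $g_1,\dots,g_{j-1}$) by a conditioning argument powered by the bounded--density hypothesis. As in the proof of Proposition \ref{Approx Prop}, since $X_k$ is uniformly bounded by \eqref{N g} and \eqref{N g1} it suffices to establish the estimate for $p=1$: the general finite $p$ then follows from the interpolation inequality $\|Z\|_{L^p}\le\|Z\|_{L^\infty}^{1-1/p}\|Z\|_{L^1}^{1/p}$ applied to $Z=X_k-\bbE[X_k|g_k,\dots,g_{k-r}]$, which turns an $L^1$--rate $\del^{\,r}$ into an $L^p$--rate $(\del^{1/p})^{r}$, so that the proposition holds with $\rho=\del$.

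Fix $k$, fix a cutoff $j$ with $2\le j\le k$ and put $r=k-j$ (for $j\le1$, i.e.\ $r\ge k-1$, the conditioning $\sigma$-algebra contains $g_1,\dots,g_k$, the matrices $g_m$ with $m<1$ being the deterministic matrix $A$, hence $\bbE[X_k|g_k,\dots,g_{k-r}]=X_k$ and there is nothing to prove). Applying the argument in the proof of Proposition \ref{Approx Prop} to the shifted sequence $(g_j,g_{j+1},\dots)$---which is again a Markov chain for which Assumption \ref{Ass2}, and therefore the Markov form of Proposition \ref{Exp Prop}, holds with the same constants uniformly in $j$---together with \cite[Lemma 12.2]{BeQu}, produces $\del\in(0,1)$ and $C>0$, independent of $j$, $k$ and of the unit vectors, such that
$$\sup_{\|x\|=\|y\|=1}\big\|\sigma(g_k,g_{k-1}\cdots g_j\,x)-\sigma(g_k,g_{k-1}\cdots g_j\,y)\big\|_{L^1}\le C\del^{\,r},$$
the $L^1$--norm being computed under the unconditioned joint law of $(g_j,\dots,g_k)$. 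Now condition on $(g_1,\dots,g_{j-1})$. On $\{(g_1,\dots,g_{j-1})=(h_1,\dots,h_{j-1})\}$ one has $X_k=\sigma(g_k,g_{k-1}\cdots g_j\,z)$ with $z=h_{j-1}\cdots h_1x_0$, whereas $\sigma(g_k,g_{k-1}\cdots g_j\,x_0)$ is a function of $(g_j,\dots,g_k)$ alone. By the Markov property the conditional law of $(g_j,\dots,g_k)$ given $(g_1,\dots,g_{j-1})=(h_1,\dots,h_{j-1})$ coincides with its conditional law given $g_{j-1}=h_{j-1}$, which by hypothesis is absolutely continuous with respect to the unconditioned law of $(g_j,\dots,g_k)$ with a density bounded by a constant $\La<\infty$ independent of $j$, $k$ and $h_{j-1}$. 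Therefore, almost surely,
$$\bbE\big[\,\big|X_k-\sigma(g_k,g_{k-1}\cdots g_j\,x_0)\big|\;\big|\;g_1,\dots,g_{j-1}\big]\le\La\sup_{\|x\|=\|y\|=1}\big\|\sigma(g_k,g_{k-1}\cdots g_j\,x)-\sigma(g_k,g_{k-1}\cdots g_j\,y)\big\|_{L^1}\le\La C\del^{\,r}.$$
Taking expectations and invoking the minimization property of conditional expectations yields $\|X_k-\bbE[X_k|g_k,\dots,g_j]\|_{L^1}\le\|X_k-\sigma(g_k,g_{k-1}\cdots g_j\,x_0)\|_{L^1}\le\La C\del^{\,r}$, which is the desired $p=1$ bound with $\rho=\del$; the first paragraph then upgrades it to every finite $p$.

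The only genuinely new ingredient compared with the independent case is this transfer of an unconditioned estimate to its conditional version, and that is exactly where the ``uniformly bounded densities'' assumption enters; everything else is a routine transcription of the proofs of Propositions \ref{Exp Prop} and \ref{Approx Prop}. The two points requiring some care are that the Markov form of Proposition \ref{Exp Prop} must be applied to all the shifted chains $(g_j,g_{j+1},\dots)$ with constants uniform in $j$---which is legitimate since Assumption \ref{Ass2} is imposed for every $j$ and the martingale differences in its proof remain uniformly bounded by \eqref{N g}---and that the density bound $\La$ must be uniform in $j$, $k$ and the conditioning value, which is precisely the content of the hypothesis. I expect the main (and only mild) obstacle to be bookkeeping: making sure the shift and the conditioning are carried out at compatible indices, so that the reweighting factor $\La$ multiplies an estimate that is still exponentially small in $r=k-j$.
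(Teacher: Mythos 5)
Your proof is correct and takes essentially the same route as the paper: reduce to $p=1$ by boundedness/interpolation, invoke the Markov form of Proposition \ref{Exp Prop} on shifted chains to get the unconditioned $L^1$ estimate $C\delta^{k-j}$, then condition on $(g_1,\dots,g_{j-1})$, use the Markov property to reduce to conditioning on $g_{j-1}$ alone, absorb the bounded-density factor $\Lambda$ into the constant, and finish with the minimization property of conditional expectations. The paper's writeup is terser (it folds the density bound into the unexplained constant $C'$ in a single displayed integral), but the mechanism is identical and you have spelled out exactly the steps it suppresses.
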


\begin{proof}
 First, since $X_k$ are uniformly bounded it is enough to prove the proposition in the case $p=1$.
Arguing like in the proof of Proposition \ref{Approx Prop}, we see that there are constants $C>0$ and $\delta\in(0,1)$ such that for all $j$ and $k>j$,
$$
\|\sigma(g_{k}, g_{j,k-1-j}x)-\sigma(g_{k}, g_{j,k-1-j}y)\|_{L^1}\leq C\delta^{k-j}.
$$
Therefore, if we denote by $\nu_{j}$ the law of $(g_1,...,g_{j-1})$ then
$$
\|f_{k}(g_k,g_{k-1},...,g_1)-\sigma(g_{k}, g_{j,k-1-j}x_0)\|_{L^1}
$$
$$
=\int\bbE\left[|f_{k}(g_k,g_{k-1},...,g_j,h_{j-1},...,h_1)-\sigma(g_{k}, g_{j,k-1-j}x_0)|g_{j-1}=h_{j-1}\right]d\nu_j(h_1,...,h_{j-1})\leq  C'\delta^{k-j}.
$$
for some constant $C'>0$.
Hence, by the minimization property of condition expectations,
$$
\|f_{k}(g_k,g_{k-1},...,g_1)-\bbE[f_{k}(g_k,g_{k-1},...,g_1)|g_k,...,g_{j}]\|_{L^1}\leq C\delta^{k-j}
$$
and the proof of the proposition is complete.   
\end{proof}
The first condition of the proposition is quite mild. For countable state Markov chains it holds when $\bbP(g_k=a,
g_{k-1}=b)\leq C\bbP(g_k=a)$ for all $a$ and $b$. For Markov chains with transition densities (with respect to their laws) it holds when the densities are uniformly bounded.

Next, to get limit theorems we need to impose appropriate mixing conditions on the chain and apply the results in \cite{NewBE} in the circumstances of Assumption \cite[Assumption 2.6]{NewBE}.

Now, we will show that Assumption \ref{Ass2} is also close under perturbations in an appropriate sense (which is relatively strong compared with the independent case).
The proof of the following result proceeds almost identically to the proof of Corollary \ref{Corr}.

\begin{proposition}
Let  $h=(h_j)$ be a Markov dependent sequence of random invertible matrices such that \eqref{N g} and Assumption \ref{Ass1} hold. Let $g=(g_j)$ be a Markov dependent sequence of invertible matrices. 

Let us assume that we can couple $h$  and $g$ such that 
$$
\te_1:=\sup_j\left\|\bbE[\tilde c(h_{j,n_0},g_{j,n_0})|g_{j-1}]\right\|_{L^\infty}<\infty.
$$
Set
$$
\te_2=\sup_j\left\|\|\nu_{j,n_0}-\nu_{j,n_0,g_{j-1}}\|_{TV}\right\|_{L^\infty}<\infty
$$
where $\nu_{j,n_0}$ is the law of $h_{j,n_0}$ and $\nu_{j,n_0,g_{j-1}}$ is the conditioned law of $h_{j,n_0}$ given $g_{j-1}$.

Then there exists a constant $\varepsilon_0=\varepsilon_0(\delta,n_0)$ such that  Assumption \ref{Ass1} holds for the sequence $(g_j)$ with some $\delta_1>0$ instead of $\delta$ and the same $n_0$ if $\max(\te_1,\te_2)<\varepsilon_0$.
\end{proposition}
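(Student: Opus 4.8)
The plan is to repeat the proof of Corollary~\ref{Corr} almost verbatim, with two changes forced by the Markov setting: every unconditional expectation is replaced by the conditional expectation given $g_{j-1}$, and $\te_2$ is used to transport the contraction estimate for $(h_j)$ from its natural conditioning (on $h_{j-1}$) to the conditioning on $g_{j-1}$ that controls $(g_j)$.

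Concretely, fix $j$ and two distinct directions $\bar x\ne\bar y$, and put $G=d(g_{j,n_0}\bar x,g_{j,n_0}\bar y)/d(\bar x,\bar y)$, $F=\tilde c(g_{j,n_0},h_{j,n_0})$ and $H=d(h_{j,n_0}\bar x,h_{j,n_0}\bar y)/d(\bar x,\bar y)$. Lemma~\ref{ContLemma} (in the form of the remark following it), applied with $A=g_{j,n_0}$ and $B=h_{j,n_0}$, gives $G\le F+H$, hence $\ln G\le\ln H+F/H$. I would bound the two terms separately, conditionally on $g_{j-1}$. First, \eqref{N g} for $(h_j)$ together with the elementary estimates $N(h_{j,n_0})^{-4}d(\bar x,\bar y)\le d(h_{j,n_0}\bar x,h_{j,n_0}\bar y)\le N(h_{j,n_0})^{4}d(\bar x,\bar y)$ and $N(h_{j,n_0})\le(\sup_k\|N(h_k)\|_{L^\infty})^{n_0}$ yields a uniform lower bound $H\ge c_1>0$ and a uniform bound $\|\ln H\|_{L^\infty}\le C_0$, with $c_1,C_0$ depending only on $n_0$ and the \eqref{N g}-constant for $(h_j)$; hence $\bbE[F/H\mid g_{j-1}]\le c_1^{-1}\bbE[F\mid g_{j-1}]\le c_1^{-1}\te_1$ a.s. Second, writing $\phi_{\bar x,\bar y}(M)=\ln\!\big(d(M\bar x,M\bar y)/d(\bar x,\bar y)\big)$ (so $\ln H=\phi_{\bar x,\bar y}(h_{j,n_0})$ and $\|\phi_{\bar x,\bar y}\|_{L^\infty}\le C_0$ uniformly in $\bar x,\bar y$), one has $\bbE[\ln H\mid g_{j-1}]=\int\phi_{\bar x,\bar y}\,d\nu_{j,n_0,g_{j-1}}$, so comparing with the unconditional law of $h_{j,n_0}$,
$$
\bbE[\ln H\mid g_{j-1}]\le\int\phi_{\bar x,\bar y}\,d\nu_{j,n_0}+2C_0\,\|\nu_{j,n_0,g_{j-1}}-\nu_{j,n_0}\|_{TV}\le\bbE[\ln H]+2C_0\te_2\le-\delta+2C_0\te_2,
$$
the last inequality being Assumption~\ref{Ass1} for $(h_j)$ (or, reading the hypothesis as Assumption~\ref{Ass2} for $(h_j)$, the tower property: $\bbE[\ln H]=\bbE[\bbE[\ln H\mid h_{j-1}]]\le-\delta$). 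Combining the two bounds,
$$
\bbE\!\left[\ln\!\left(\frac{d(g_{j,n_0}\bar x,g_{j,n_0}\bar y)}{d(\bar x,\bar y)}\right)\,\Big|\,g_{j-1}\right]\le-\delta+\big(2C_0+c_1^{-1}\big)\max(\te_1,\te_2)\quad\text{a.s.}
$$
uniformly in $j$ and $\bar x\ne\bar y$; choosing $\varepsilon_0$ so that $(2C_0+c_1^{-1})\varepsilon_0\le\delta/2$ gives the claim with $\delta_1=\delta/2$ and the same $n_0$. As this is a bound conditional on $g_{j-1}$, we in fact obtain Assumption~\ref{Ass2} for $(g_j)$, which is stronger than Assumption~\ref{Ass1}.

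I do not expect a serious obstacle; the only delicate point is bookkeeping with the two distinct conditioning $\sigma$-algebras. Assumption~\ref{Ass2} for $(h_j)$ is stated with conditioning on $h_{j-1}$, whereas the estimate needed for $(g_j)$ is conditioned on $g_{j-1}$, and $\te_2$ is precisely what lets one route the comparison through the \emph{unconditional} law $\nu_{j,n_0}$ of $h_{j,n_0}$, to which the hypothesis applies after taking the outer expectation. One should also note that only \eqref{N g} for $(h_j)$ is used (it supplies both $c_1$ and $C_0$, and no norm control of the $g_j$ is needed), and fix the convention for the order of the arguments of $\tilde c$ in the definition of $\te_1$ so that it dominates $\bbE[\tilde c(g_{j,n_0},h_{j,n_0})\mid g_{j-1}]$ as produced by Lemma~\ref{ContLemma}.
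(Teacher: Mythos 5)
Your proof is correct and is essentially the argument the paper intends: the paper's own proof is just the remark that one repeats the proof of Corollary \ref{Corr} with expectations conditioned on $g_{j-1}$, and your total-variation comparison of $\nu_{j,n_0,g_{j-1}}$ with $\nu_{j,n_0}$ is precisely the (implicit) role of $\theta_2$, after which Assumption \ref{Ass1} for $(h_j)$ is applied to the unconditional law and the $\tilde c$-term is absorbed via Lemma \ref{ContLemma}. The only differences are cosmetic: you replace the corollary's event-splitting ($\Gamma$, $\Delta$, Markov and Cauchy--Schwarz) by the direct bound $\ln G\le \ln H+F/H$ together with the uniform bounds $H\ge c_1>0$ and $|\ln H|\le C_0$, which is legitimate here because \eqref{N g} is assumed for $(h_j)$ (so your $\varepsilon_0$ also depends on that bound, matching the paper's actual, if unstated, dependence), and you rightly flag that the order of the arguments of $\tilde c$ in the definition of $\theta_1$ should be read as in Corollary \ref{Corr}, i.e. as controlling $\bbE[\tilde c(g_{j,n_0},h_{j,n_0})\mid g_{j-1}]$.
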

One example is the case when the support of $h_j$ has sufficiently small diameter and $g_j$ takes values in a small neighborhood of the support of $h_j$. In this case we can just take the product measure of $g$ and $h$, making these processes independent so $\te_2=0$. In that case $\te_1$ is small if for all $j$ the law of $(g_{j+n_0},...,g_{j+1})$ given $g_j$ is absolutely continuous with respect to the unconditioned laws of $(g_{j+n_0},...,g_{j+1})$ with uniformly bounded densities and $\bbE[\tilde c(h_{j,n_0},g_{j,n_0})]$ is small.

\end{document}